%%%%%%%%%%%%%%%%%%%%%%%%%%%%%%%%%%%%%%%%%%%%%%%%%%%%%%
% Master file for Second Phase
% prepared by Phase II group:
% Kira Adaricheva (Hofstra University) 
% Evan Daisy (Amherst College), 
% Ayush Garg (IIT Delhi), 
% Zachary King (University of Central Oklahoma), 
% Grace Ma (University of Notre Dame), 
% Michelle Olson (California State University, Fullerton), 
% Cat Raanes (University of British Columbia), 
% James Thompson (UNC-Chapel Hill)  
% August 2020 - May 2022
% USA-Canada-India
%%%%%%%%%%%%%%%%%%%%%%%%%%%%%%%%%%%%%%%%%%%%%%%%%%%%%%
\documentclass{article}
\usepackage[utf8]{inputenc}
\usepackage{multicol}
\usepackage{caption} %Make sure to adjust figure captions and labels after everything is added
\usepackage{subcaption}
\usepackage{tabularx}
\usepackage{capt-of}
\usepackage{tikz}
\usepackage{float}
\usepackage{lineno}
\usepackage{graphicx}
\usepackage{comment}
\usepackage[shortlabels]{enumitem}
\usepackage{amsthm,amsmath, amsfonts}
\usepackage{fancyhdr} % For appendix headers
\usepackage{appendix} % Also for appendices
\usepackage{caption} % For appendices as well
\usepackage{hyperref}
\usepackage{placeins}
\usetikzlibrary{calc}
\usepackage{lineno}
%\linenumbers

\newtheorem{theorem}{Theorem}
\newtheorem*{theorem*}{Theorem}
\newtheorem{proposition}[theorem]{Proposition}
\newtheorem{corollary}[theorem]{Corollary}
\newtheorem{lemma}{Lemma}
\newtheorem{definition}{Definition}

\newtheorem{question}{Question}
\newtheorem{example}{Example}
%\numberwithin{theorem}{section}
%\numberwithin{definition}{section}

\newcommand{\arck}[1]{{\overset{\frown}{#1}}}

\DeclareMathOperator{\ch}{ch}

\DeclareMathOperator{\CH}{CH}

\PassOptionsToPackage{hyphens}{url}

% Begin appendix header stuff.
\pagestyle{fancy}
\fancyhf{}

\cfoot{\thepage}
% End appendix header stuff.

\title{Convex geometries representable with colors, by ellipses on the plane, and impossible by circles}
%\author{Polly Mathews Junior}  
\author{Kira Adaricheva \thanks{Department of Mathematics, Hofstra University,
Hempstead, NY 11549, USA (kira.adaricheva@hofstra.edu)}
\and Evan Daisy 
\thanks{Amherst College, USA}
\and Ayush Garg \thanks{Indian Institute of Technology, Delhi, India}
\and Zachary King 
\and Grace Ma \thanks{University of Notre Dame, South Bend, USA}
\and Michelle Olson \thanks{California State University, Fullerton, USA}
\and Cat Raanes \thanks{Carnegie Mellon University, Pittsburgh, USA}
\and James Thompson \thanks{University of North Carolina, Chapell Hill, USA}
}
\date{August 2020 - May 2022}
\begin{document}
\maketitle

%\linenumbers

\begin{abstract} A convex geometry is a closure system satisfying the anti-exchange property. This paper, following the work of K. Adaricheva and M. Bolat (2016) and the Polymath REU 2020 team, continues to investigate representations of convex geometries on a 5-element base set. It introduces several properties: the opposite property, nested triangle property, area Q property, and separation property, of convex geometries of circles on a plane, preventing this representation for numerous convex geometries on a 5-element base set. It also demonstrates that all 672 convex geometries on a 5-element base set have a representation by ellipses, as given in the appendix for those without a known representation by circles, and introduces a method of expanding representation with circles by defining unary predicates, shown as colors. 

\end{abstract}

\tableofcontents

\section{Introduction}

This paper follows the first paper of the convex geometries team \cite{Poly20} developed during the Polymath REU - 2020 project, see more details in \cite{Lemons21}. 

In both papers we addressed the problem raised in Adaricheva and Bolat \cite{AdBo19}: whether all convex geometries 
with convex dimension at most 5 are representable by circles on the plane using the closure operator of convex hull for circles. 

A convex geometry is a closure system $(X, \phi)$, where closure operator $\phi$ satisfies the anti-exchange property. The survey paper of Edelman and Jamison  \cite{EdJa85} was instrumental to start off the study of finite convex geometries, which are also the dual systems to antimatroids. The study of infinite convex geometries was initiated in Adaricheva, Gorbunov and Tumanov \cite{AGT03}. To see the development of the topic, including infinite convex geometries, one needs to consult the more recent survey Adaricheva and Nation \cite{AdNa16}. 

In \cite{Cz14} G. Cz\'edli proposed representation of convex geometries by interpreting elements of base set $X$ as circles on the plane, and closure operator $\phi$ as a convex hull operator acting on circles. He also showed that all finite convex geometries with \emph{convex dimension} at most 2 can be represented by circles on the plane. The parameter $cdim$ of convex geometry represents the diversity of closed sets with respect to closure operator $\phi$ by measuring the size of maximal anti-chain of closed sets, see Edelman and Saks \cite{EdSa88}. In Adaricheva and Bolat \cite{AdBo19}, it was found that there is an obstruction for representation of convex geometries by circles on the plane, in the form of the Weak Carousel property, which allowed the authors to build an example of a convex geometry on a 5-element set of $cdim=6$.

In \cite{Poly20} it was discovered that all convex geometries for $|X|=4$ are representable by circles on the plane, and among 672 non-isomorphic geometries for $|X|=5$, 621 were represented, while 49 were deemed \textbf{``impossible"}, i.e. no circle representation was found for them. Finally, it was proved in \cite{Poly20} that 7 of 49 impossible geometries cannot be represented due to the Triangle property, in addition to one from \cite{AdBo19} due to the Weak Carousel property.
In particular, some geometries are non-representable due to the Triangle property had $cdim=4,5$.

In the current paper, we prove four new geometric properties related to circles on the plane that allow us to prove an additional 30 of the 49 geometries from the impossible list given in \cite{Poly20} are not representable.

We identify these geometries by their number in the list as generated in \cite{Poly20}, and additionally by their tight implications, sometimes with the elements of the set relabeled for clarity.

1) Opposite Property (Theorem \ref{OppProp} and Corollary \ref{List1}) implies that the following geometries are not representable:
\[
G46, G60, G74, G84, G114, G115, G153
\]

2) Nested Triangles Property (Theorem \ref{NT} and Corollary \ref{List2}):
\begin{align*}
&G12,G15,G18,G21,G23,G26,G27,G33,G35,\\
&G45,G47,G49,G56,G60,G70,G89,G94,G134
\end{align*}

3) Area Q Property (Theorem \ref{G74} and Corollary \ref{List3}): 
\[
G74,G96,G105,G143,G147,G206,G235,G351
\]

4) Separation Property (Theorem \ref{G14} and Corollary \ref{List4}):
\[
G14, G23, G26, G39, G54.
\]

Note that these three lists slightly overlap, with several of the geometries appearing in more than one list, and a few appeared earlier due to the Triangle Property in \cite{Poly20}.

This provides a partial answer to Question 2 in \cite{Poly20}.

The following 11 impossible geometries comprise still unproven cases:
\[
G43,G57,G69,G87,G95,G122,G129,G132,G161,G175,G211
\]

Through the work on these ``impossible" geometries, we used a program, previously developed for \cite{Poly20}, which searched for all convex geometries that satisfied a given list of implications. In addition, we also used the GeoGebra software \cite{ggb}, to help with proofs and various representations.

%Through the work on these ``impossible" geometries we continued to use some useful search computer programs developed during work on \cite{Poly20} as well as GeoGebra software that allows to draw various geometric configurations \cite{ggb}.

%\mo{I like the previous paragraph! But here is another option:}
%\mo{Through the work on these ``impossible" geometries, we used a program which searched for all convex geometries that satisfied a given list of implications. In addition to the program previously developed for \cite{Poly20}, we also used the GeoGebra website to help with various representations and to help with proofs \cite{ggb}.}

Finally, we address two alternate schemes of representation.

In section \ref{ellipse} we discuss representation of geometries by ellipses on the plane. It was shown in J. Kincses \cite{Kin17} that not all geometries can be represented by ellipses. However, we demonstrate in Appendix A that all 49 geometries on a 5-element set that were deemed ``impossible" for circle representation in \cite{Poly20}, in fact, have representation by ellipses. 
%\ka{Made adjustment to the phrase.} \mo{OK!}

\begin{question}
What is the smallest cardinality of the base set and the smallest convex dimension of a geometry that cannot be represented by ellipses on the plane?
\end{question}

In the last section \ref{predicates} we introduce a new type of representation of geometries using unary predicates on the base set that can be interpreted as colors. In Appendix B we give representation of all 49 ``impossible" convex geometries on a 5-element set using colored circles. It is also shown in section \ref{predicates} that one of the geometries, G18, cannot be represented with two colors only. The rest of the ``impossible" geometries are represented with 2 colors only.
Note that G18 has $cdim=6$, thus, we may ask: 

\begin{question}
Can all convex geometries of convex dimension 4 or 5 be represented by circles with 2 colors only?
\end{question}

\section{Terminology and known results}

%\section{Preliminaries}\label{preliminaries}
%\subsection{Basic Definitions}
A convex geometry is a special case of a closure system. It can be defined through a closure operator, or through an alignment.
\begin{definition}\label{def:closure}
Let $X$ be a set. A mapping $\varphi \colon 2^X \to 2^X$ is called a \emph{closure operator}, if for all $Y, Z \in 2^X$:
\begin{enumerate}[noitemsep]
    \item $Y \subseteq \varphi(Y)$
    \item if $Y \subseteq Z$ then $\varphi(Y) \subseteq \varphi(Z)$
    \item $\varphi(\varphi(Y)) = \varphi(Y)$
\end{enumerate}
 
A subset $Y \subseteq X$ is \emph{closed} if $\varphi(Y) = Y$. The pair $(X,\varphi)$, where $\varphi$ is a closure operator, is called a \emph{closure system}.
\end{definition}

\begin{definition}\label{def:alignment}
Given any (finite) set $X$, an \textit{alignment} on $X$ is a family
$\mathcal{F}$ of subsets of $X$ which satisfies two properties:
\begin{enumerate} [noitemsep]
    \item $X \in \mathcal{F}$
    \item If $Y, Z \in \mathcal{F}$ then $Y \cap Z \in \mathcal{F}$.
\end{enumerate}
\end{definition}
\noindent{Closure systems are dual to alignments in the following sense:}
\medskip

Often, a closure operator is described by the means of \emph{implications}. For example $Y\rightarrow z$ means that $z \in \varphi(Y)$, for the underlying closure operator $\varphi$. Note that every closure operator can be expressed by some set of implications, for example, $\Sigma_\varphi=\{A\rightarrow \varphi(A): A \in 2^X\}$. On the other hand, given \emph{any} set of implications $\{A\rightarrow B: A,B\subseteq X, B\not = \emptyset\}$, one can find a unique minimal closure operator $\varphi$ such that $B\subseteq \varphi(A)$.
\medskip

\noindent If $(X, \varphi)$ is a closure system, one can define a family of closed sets $\mathcal{F} :=\{Y \subseteq X : \varphi(Y) = Y\}$. Then $\mathcal{F}$ is an alignment.
\medskip

\noindent If $\mathcal{F}$ is an alignment, then define $\varphi: 2^X \rightarrow 2^X$ in the
following manner:

\medskip

\noindent for all $Y \subseteq X$, let 
%$\mathcal{C}_Y 
$\varphi(Y):=\bigcap  \{Z \in \mathcal{F}
: Y \subseteq Z\}$.
%and $\varphi(Y) := \bigcap_{Z \in \mathcal{C}_Y} Z$.
Then $(X, \varphi)$ is a closure system.

\noindent A useful perspective on closure systems is provided by its \textit{implications.} Given a closure system $(X, \varphi)$, 
%with alignment $\mathcal{F}$, 
an ordered pair $(A,B)$, where $A, B \in 2^X\setminus \{\emptyset\}$, also denoted 
$A \to B$, is an \emph{implication} if $B \subseteq \varphi(A)$.

\noindent 

A set of implications fully defining closure operator $\varphi$ is often referred to as \emph{implicational basis} of a closure system. The recent exposition on implicational bases is given in Adaricheva and Nation \cite{AN16I}.

\begin{definition}\label{def:cg}
A closure system $(X,\varphi)$ is called a \emph{convex geometry} if
\begin{enumerate}[noitemsep]
    \item $\varphi(\emptyset) = \emptyset$
    \item For any closed set $Y\subseteq X$ and any distinct points $x,y \in X\setminus Y,$ if $x \in
    \varphi(Y \cup \{y\})$ then $y \not\in \varphi(Y \cup \{x\})$.
\end{enumerate}
\end{definition}
The second property in this definition is called the \emph{Anti-Exchange Property}.

\noindent We can use duality between closure operators and alignments to provide another definition of a convex
geometry.
\begin{definition}
\label{cg_alignment}
A closure system $(X,\varphi)$ is a convex geometry iff the corresponding alignment
$\mathcal{F}$ satisfies the following two properties:
\begin{enumerate}[noitemsep]
    \item $\emptyset \in \mathcal{F}$
    \item If $Y \in \mathcal{F}$ and $Y \neq X$ then $\exists a\in X\setminus Y$
    s.t. $Y\cup\{a\} \in \mathcal{F}$.
\end{enumerate}
\end{definition}

A particular example of a closure operator on a set is the convex hull operator, where the base set $X$ is a set of points in Euclidean space $\mathbb{R}^n$. For the goals of this paper we use only the plane, i.e. the space $\mathbb{R}^2$.

\begin{definition}
A set $S$ in $\mathbb{R}^2$ is called  \emph{convex} if for any two points $p,q \in S$, the line segment connecting $p$ and $q$ is also contained in $S$. 
\end{definition}

\begin{definition}
Given set $S$ of points in $\mathbb R ^ 2$, the convex hull of $S$, in notation $\CH(S)$, is the intersection of all convex sets in $\mathbb R ^2$ which contain $S$.
\end{definition}

Thus, $\CH$ can be thought as a closure operator acting on $\mathbf{R}^2$.

Finally, we want to recall the definition of the convex hull operator for circles introduced in Cz\'edli \cite{Cz14}.

If $x$ is a circle on the plane, then by $\tilde{x}$ we denote a set of points belonging to $x$. We allow a circle to have a radius $0$, in which case it is a point.

\begin{definition}
Let $X$ be a finite set of circles in $\mathbb{R}^2$. Define the operator $\ch_c : 2^X \rightarrow 2^X$, a convex hull operator for circles, as follows:
\[
\ch_c(Y) =\{x \in X: \tilde{x}\subseteq \CH(\bigcup_{y \in Y}\tilde{y})\},
\]
for any $Y\in 2^X$.
\end{definition}
For example, $a, e \in \ch_c(b,c,d)$ on Figure \ref{fig:ch1}, while $a \not \in \ch_c(b,c,d)$, $e \in \ch_c(b,c,d)$ in Figure \ref{fig:ch2}.

We think of any circle $d$ as a \emph{closed} set of points (in standard topology of $\mathbb{R}^2$), and we use $Circ(d)$ for the boundary of $d$, which is included in $d$. Often we will be using the same notation $d$ for an element of convex geometry of circles, but also as the set of points in $\mathbb{R}^2$, thus avoiding notation $\tilde{d}$. For example, $abc\rightarrow d$ means that $d \in \ch_c(a,b,c)$ in convex geometry of circles, but $c\subseteq d$ should be understood as $\tilde{c}\subseteq \tilde{d}$.

\begin{figure}
\begin{center}
    \begin{subfigure}[a]{0.4\textwidth}
        \centering
      \includegraphics[width=\textwidth]{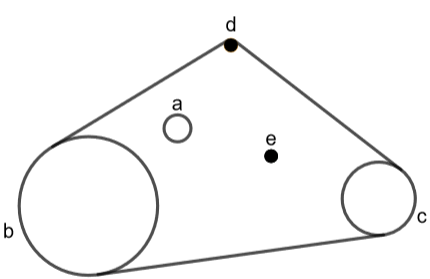}
      \caption{Circle $a$ is in the convex hull of $b,c,d$}
           \label{fig:ch1}
    \end{subfigure}
\hspace{1cm}
    \begin{subfigure}[a]{0.4\textwidth}
        \centering
        \includegraphics[width=\textwidth]{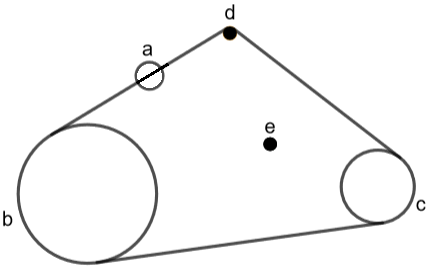}
        \caption{Circle $a$ is not in the convex hull of $b,c,d$}
            \label{fig:ch2}
    \end{subfigure}
\end{center}
\caption{Examples of convex hulls}
\label{fig:convex hull example 2}
\end{figure}

We often will be writing $\CH(Y)$, for the subset $Y$ of circles, instead of $\CH(\bigcup_{y \in Y}\tilde{y})\}$. For example, $\CH(a,b,c)$ for circles $a,b,c\in X$ represents the convex set of points $\CH(\tilde{a}\cup\tilde{b}\cup \tilde{c})$.

The following statement is modified Lemma 5.1 in \cite{AdBo19}.

\begin{lemma}\label{3 circles}
Let $a,b,c$ be three circles on the plane such that none is included into another. 
Then there are three different types of configurations formed by  $a,b,c$:

\begin{itemize}
    \item[(1)] $\CH(x,y,z)= \CH(y,z)$, for some labeling of $a,b,c$ by $x,y,z$ as on Figure \ref{fig:WeakCarousel1};
    \item[(2)] $\CH(x,y,z)=\CH(x,y)\cup \CH(y,z)$, for some labeling of $a,b,c$ by $x,y,z$, as on Figure \ref{fig:WeakCarousel2} or Figure \ref{fig:WeakCarousel2Case}.
    \item[(3)] $\CH(x,y,z)\not = \CH(x,y)\cup \CH (x,z)$, for any labeling of $a,b,c$ by $x,y,z$, as on Figure \ref{fig:WeakCarousel3} or 
    Figure \ref{fig:WeakCarousel4}.
\end{itemize}
\end{lemma}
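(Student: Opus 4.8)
The plan is to analyze the boundary of $\CH(a,b,c)$ through the support-function viewpoint. For a unit direction $u$, the disk $w\in\{a,b,c\}$ that is extremal is the one maximizing $h_w(u)=c_w\cdot u+r_w$, where $c_w,r_w$ are the center and radius of $w$. As $u$ runs over the circle of directions, $\partial\CH(a,b,c)$ is swept out as a cyclic sequence of circular arcs (one per maximal run of a single disk) joined by outer common tangent segments. Since each difference $h_w-h_{w'}$ is a sinusoid that changes sign exactly when neither disk contains the other, every set $\{u:h_w(u)\ge h_{w'}(u)\}$ is a proper nonempty arc; hence the extremal set of each disk, being the intersection of two such arcs, is empty, one arc, or two arcs. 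I call a disk \emph{exposed} if it contributes an arc to the boundary (its extremal set has nonempty interior) and \emph{doubled} if that set consists of two arcs.

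First I would record the easy reductions. If some disk, say $a$, is not exposed, then $\max(h_b,h_c)\ge h_a$ everywhere, i.e.\ $a\subseteq\CH(b,c)$, whence $\CH(a,b,c)=\CH(b,c)$; this is case (1), and it occurs whenever fewer than three disks are exposed (fewer than two cannot happen, since a lone exposed disk would contain the other two, against the hypothesis). So assume all three are exposed. The crucial combinatorial step is that then at most one disk is doubled. Indeed, suppose $a$ and $b$ were both doubled. Doubling of $a$ forces $\{h_b>h_a\}$ and $\{h_c>h_a\}$ to be disjoint, so on $\{h_b>h_a\}$ one has $h_b>h_a\ge h_c$; doubling of $b$ forces $\{h_a>h_b\}$ and $\{h_c>h_b\}$ disjoint, so on $\{h_a>h_b\}$ one has $h_a>h_b\ge h_c$. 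As $\{h_b>h_a\}$ and $\{h_a>h_b\}$ together cover the circle, $h_c$ is never the maximum, so $c$ is not exposed, a contradiction. Hence with all three exposed the cyclic order of arcs is either $a,b,c$ (each once) or, after relabeling, $x,y,z,y$ with a single doubled disk $y$.

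In the doubled case I would verify case (2): $\CH(x,y,z)=\CH(x,y)\cup\CH(y,z)$, where $\supseteq$ is automatic. For $\subseteq$ I would read off the boundary as $\arck{x},t_1,\arck{y_1},t_2,\arck{z},t_3,\arck{y_2},t_4$, where $t_1,t_4$ are the two $x$--$y$ outer tangents, $t_2,t_3$ the two $y$--$z$ outer tangents, and $\arck{y_1},\arck{y_2}$ the two exposed arcs of $y$; crucially no $x$--$z$ tangent occurs, precisely because $x$ and $z$ are never adjacent. The arc of $y$ bounding $\CH(x,y)$ and the arc of $y$ bounding $\CH(y,z)$ each contain $\arck{y_1}$ and $\arck{y_2}$, so the boundary of $\CH(x,y)\cup\CH(y,z)$ reconstructs exactly $\partial\CH(x,y,z)$; in particular the union is convex and the two sets coincide.

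Finally, in the each-once case I would verify case (3). The cyclic order $x,y,z$ makes every pair adjacent, so each pair contributes exactly one tangent segment. To see $\CH(x,y,z)\neq\CH(x,y)\cup\CH(x,z)$, I would take the tangent segment $t_{yz}$ between $y$ and $z$ and let $m$ be its midpoint. Then $m\in\partial\CH(x,y,z)\subseteq\CH(x,y,z)$, while $m\notin\CH(x,y)$ and $m\notin\CH(x,z)$, since the supporting line carrying $t_{yz}$ meets $\CH(x,y)$ only at its tangency point on $y$ and meets $\CH(x,z)$ only at its tangency point on $z$, both distinct from the interior point $m$. Applying the same argument to $t_{xz}$ and to $t_{xy}$ rules out the two remaining labelings, giving the strict inequality ``for any labeling.'' The main obstacle throughout is the combinatorial claim that at most one disk can be doubled (equivalently, that the hull has at most four arcs); the disjointness argument above resolves it, and once the cyclic order is pinned down the two decomposition statements follow from the explicit boundary descriptions.
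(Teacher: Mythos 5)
The first thing to note is that the paper does not actually prove this lemma: it is imported (as ``modified Lemma 5.1'') from \cite{AdBo19}, where the proof is a direct geometric case analysis of tangent lines. Your support-function route is therefore a genuinely different argument, and in its generic core it is correct: the identification of $\partial\CH(a,b,c)$ with the cyclic sequence of extremal arcs of $u\mapsto\max(h_a,h_b,h_c)(u)$, the observation that each pairwise difference $h_w-h_{w'}$ is a nondegenerate sinusoid when neither disk contains the other, the reduction of case (1) to non-exposure, the at-most-one-doubled claim, and the boundary reconstruction in case (2) all work, and this viewpoint has the added benefit of delivering for free the arc-adjacency and alternation facts the paper needs later (Lemmas \ref{c1-eq}--\ref{c3-eq} and the Point Order Theorem).

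There is, however, a genuine gap at the degenerate configurations, which the paper explicitly cares about: its ``limit case 2'' is defined to be part of configuration (2) and is used repeatedly afterwards (e.g.\ in Lemma \ref{c2-eq} and Theorem \ref{NC2}). In that situation the middle circle $y$ touches one outer common tangent of $x$ and $z$ at a single point, so its extremal set $\{h_y\ge h_x\}\cap\{h_y\ge h_z\}$ consists of one nondegenerate arc plus an isolated direction $u^*$ with $h_x(u^*)=h_y(u^*)=h_z(u^*)$. Under your definitions ($y$ is ``doubled'' only if its extremal set has two arcs contributing boundary) the cyclic arc sequence reads $x,y,z$ each once, so your procedure files this configuration under case (3) --- but it satisfies the case-(2) identity, and your midpoint witness fails exactly here: at the transition direction $u^*$ between the $x$-arc and the $z$-arc, the supporting line also supports $y$, hence meets $\CH(x,y)$ not in the single tangency point you assert but in a whole segment containing the midpoint $m$. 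Your phrase ``meets $\CH(x,y)$ only at its tangency point'' silently assumes the strict inequality $h_x<h_y=h_z$ at each tangent direction, which is precisely what fails in the limit case. The repair stays inside your framework: declare $y$ doubled whenever the open arcs $\{h_x>h_y\}$ and $\{h_z>h_y\}$ are disjoint, so that an isolated tie counts as a degenerate second arc; check that your disjointness argument for ``at most one doubled'' still goes through (it uses only disjointness of open arcs, plus continuity at the two zeros of $h_a-h_b$ --- a point you also elide when saying $\{h_b>h_a\}$ and $\{h_a>h_b\}$ ``together cover the circle''); and rerun the case-(2) boundary walk allowing $\arck{y_1}$ or $\arck{y_2}$ to be a single point with $t_1,t_2$ collinear. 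With that convention, in the genuine each-once case exactly two support functions tie at every transition and the third is strictly smaller, which is the strictness your case-(3) argument needs. One further sentence is worth adding: a tangent segment $t_{yz}$ cannot degenerate to a point, since two circles tangent to a line at the same point on the same side are nested, which the hypothesis excludes --- this guarantees your $m$ really is distinct from both tangency points.
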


\begin{figure}[H]
    \centering
    \begin{tikzpicture}[scale=.4]
	\draw (-0.5,1) circle (1);
	\draw  (2, 1.5) circle(1.5);
	\draw (5,1.25) circle (0.75);
	%bottom tangent
	\draw[-] (-0.5,0) -- (1.95,0);
	%left tangent
	\draw[-] (-0.68,1.985) -- (1.6,2.95);
	%right top tangent
	\draw[-] (2.35,2.96) -- (5.12, 2); 
	%right bottom tangent
	\draw[-] (2.2,0) -- (5.12, 0.5); 
	%Labeling the Circles
	\node at (-0.5,0.95) {$a$};
	\node at (2,1.45) {$b$};
	\node at (5, 1.2) {$c$};
\end{tikzpicture} 
    \caption{}
    \label{fig:WeakCarousel2}
\end{figure}
\begin{center}
\captionsetup{type=figure}
\begin{tabularx}{\textwidth} { 
   >{\centering\arraybackslash}X 
   >{\centering\arraybackslash}X 
   >{\centering\arraybackslash}X | }

 \begin{tikzpicture}[scale=.4]

	\draw (-0.5,1) circle (1); %a
	\draw  (1.4, 1.2) circle(0.75); %b
	\draw (3.9,1.25) circle (1.25); %c

	%bottom tangent
	\draw[-] (-0.5,0) -- (4,0); 
	%top tangent
	\draw[-] (-0.68,1.985) -- (3.95,2.5);
	%Labeling the Circles
	\node at (-0.5,0.95) {$a$};
	\node at (1.46,1.25) {$b$};
	\node at (4.0,1.25) {$c$};
\end{tikzpicture}\captionof{figure}{}\label{fig:WeakCarousel1} 
 & 
 \begin{tikzpicture}[scale=.4]
	\draw (-0.5,1) circle (1);
	\draw  (2, 1.5) circle(1.5);
	\draw (5,1.25) circle (1.25);
	%bottom tangent
	\draw[-] (-0.5,0) -- (5,0);
	%left tangent
	\draw[-] (-0.68,1.985) -- (1.6,2.95);
	%right tangent
	\draw[-] (2.35,2.96) -- (5.12, 2.49); 
	%Labeling the Circles
	\node at (-0.5,0.95) {$a$};
	\node at (2,1.45) {$b$};
	\node at (5.1,1.2) {$c$};
      \end{tikzpicture} \captionof{figure}{}\label{fig:WeakCarousel2Case} \\
\end{tabularx}
\end{center}

\begin{center}
\captionsetup{type=figure}
\begin{tabularx}{\textwidth} { 
   >{\centering\arraybackslash}X 
   >{\centering\arraybackslash}X 
   >{\centering\arraybackslash}X | }
 \begin{tikzpicture}[scale=.4]
    %circle a
	\draw (0.35,-0.65) circle (1);
	%circle b
	\draw  (2, 1.5) circle(1.5);
	%circle c
	\draw (4,-1) circle (1.25);
	%bottom tangent
	\draw[-] (0.3,-1.65) -- (4,-2.25);
	%left tangent
	\draw[-] (-0.65,-.5) -- (0.73,2.35);
	%right tangent
	\draw[-] (3.25,2.35) -- (4.9, -0.1); 
	%Labeling the Circles
	\node at (0.3,-0.65) {$a$};
	\node at (2,1.45) {$b$};
	\node at (4,-1) {$c$};
       \end{tikzpicture}\captionof{figure}{}\label{fig:WeakCarousel3} 
 & 
 \begin{tikzpicture}[scale=.4]
    %circle a
	\draw (0,-1) circle (0.75);
	%circle b
	\draw  (0.5, 0.75) circle(1);
	%circle c
	\draw (4,0) circle (2.5);
	%left tangent
	\draw[-] (-0.75,-1.1) -- (-0.47,1.05);
	%top tangent
	\draw[-] (0.2,1.7) -- (3.25, 2.35);
	%bottom tangent
	\draw[-] (-0.1,-1.77) -- (3.5,-2.45);
	%Labeling the Circles
	\node at (0,-1) {$a$};
	\node at (0.45, 0.73) {$b$};
	\node at (4,0) {$c$};
       \end{tikzpicture}\captionof{figure}{}\label{fig:WeakCarousel4}    \\

\end{tabularx}
\end{center}

\begin{definition}
Let $x, y, z$ be circles in configuration 1 or 2. We say that circle $y$ is in the \emph{center} (of configuration), if $\CH(x, y, z) = \CH(x,y) \cup \CH(y,z)$ or  $y \in ch_c(x, z)$. We call circles $x$ and $z$ \emph{external circles}.
\end{definition}

\begin{definition}
Let $x,y,z$ be in configuration 2 such that $y$ has a touching point on only one tangent of $x,z$ and 2 points of intersection on the other tangent; then $x,y,z$ is in \emph{limit case 2}. 
\end{definition}

\begin{lemma}\label{Lemma42}
Suppose $p$ is a circle inscribed in $\angle{A}$ of a triangle $\triangle{ABC}$. Let $w_1$ be an area of $\triangle{ABC}$ outside the circle $p$ and adjacent to the vertex $A$, and $w_2= \triangle{ABC}\setminus (p \cup w_1)$ as in Figure \ref{fig:PinAngle}. Then any circle $y\subseteq \triangle{ABC}$ cannot have non-empty intersections with both areas $w_1, w_2$.
 \end{lemma}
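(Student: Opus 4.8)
The plan is to argue by contradiction, converting the statement into a purely topological obstruction created by the tangency of $p$ to \emph{both} sides of the angle. The key observation is the following: since $p$ is inscribed in $\angle A$, it is tangent to line $AB$ at a single point $T_1$ and to line $AC$ at a single point $T_2$. Let $W$ be the (infinite) closed wedge bounded by the rays $AB$ and $AC$, so that $\triangle ABC\subseteq W$ and $p\subseteq W$. Because $p$ touches each side of the wedge in exactly one point, it ``plugs'' the wedge, and I claim that $W\setminus p$ has exactly two connected components: the bounded corner region adjacent to $A$, which is precisely $w_1$, and the unbounded remainder, which I will call $w_2'$ and which satisfies $w_2=\triangle ABC\cap w_2'$. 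Establishing this separation rigorously — that tangency to both sides, rather than merely crossing or missing them, is what disconnects the wedge — is the geometric heart of the argument; see Figure \ref{fig:PinAngle}.

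Granting the separation, suppose for contradiction that some circle $y\subseteq\triangle ABC$ meets both $w_1$ and $w_2$. Since $y\subseteq W$, we have $y\setminus p\subseteq W\setminus p=w_1\sqcup w_2'$, and hence
\[
y\setminus p=(y\cap w_1)\sqcup(y\cap w_2').
\]
Both pieces are nonempty — the first because $y$ meets $w_1$, the second because $y$ meets $w_2\subseteq w_2'$ — and both are relatively open in $y\setminus p$, as $w_1$ and $w_2'$ are disjoint open sets. Therefore $y\setminus p$ is \emph{disconnected}.

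It then remains to contradict this by showing that $y\setminus p$ is in fact always \emph{connected} for two disks. Here I would use the elementary fact that two distinct circles meet in at most two points. The disks $y$ and $p$ are distinct, since $y$ meets $w_1$ while $p$ does not, so $Circ(y)$ and $Circ(p)$ intersect in at most two points; consequently $Circ(p)$ enters the interior of $y$ along at most one arc, and the removed set $y\cap p$ — which is convex, being an intersection of two disks — meets the boundary $Circ(y)$ in a single arc. Deleting such a set from the disk $y$ leaves a connected ``lune'', so $y\setminus p$ is connected. The degenerate subcases, in which the two circles are disjoint, internally nested, or tangent, only make $y\setminus p$ more obviously connected. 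This contradicts the previous paragraph and proves the lemma.

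The step I expect to be the main obstacle is the separation claim of the first paragraph: one must verify that the near arc of $p$ together with the two tangent segments $AT_1$ and $AT_2$ bounds a genuine open corner region $w_1$, and that no set contained in $W$ can pass from $w_1$ to $w_2'$ without meeting $p$. In particular the ``pinch points'' $T_1,T_2$, where $p$ is tangent to a side, must be treated so that crossing there still forces a nonempty intersection with the closed disk $p$. Once the wedge is correctly shown to be split by $p$ into exactly two pieces, the remainder of the proof is just the clean clash between this disconnectedness of $y\setminus p$ and the connectivity forced by the two-point bound on intersections of circles.
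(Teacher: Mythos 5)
The paper states Lemma \ref{Lemma42} without proof --- it is imported from \cite{AdBo19} --- so there is no internal argument to compare against; your proposal must stand on its own, and in essence it does. Its decisive step is correct and nicely isolated: since $Circ(y)$ and $Circ(p)$ are distinct circles, they meet in at most two points, so $y\setminus p$ is a lune (or an annulus, or all of $y$, or empty, in the various degenerate cases) and in every case connected; hence $y$ cannot meet two sets lying in different connected components of $W\setminus p$. You are also right to notice that bare convexity of $y\cap p$ would not suffice (removing a convex strip can disconnect a disk); it is the one-arc structure forced by the two-point bound that does the work. A pleasant byproduct of working in the wedge $W$ rather than in $\triangle ABC$ is that your argument also covers the case where $p$ protrudes through the side $BC$, and it transfers essentially verbatim to the parallel-lines (strip) variant that the paper mentions immediately after the lemma.

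The one debt you leave open is the separation claim, which you flag honestly but do not prove; a referee would ask you to close it, though it is routine. Take the Jordan curve $J$ consisting of the segments $[AT_1]$, $[AT_2]$ and the near arc $\arck{T_1T_2}$ of $Circ(p)$; then $w_1$ lies in the interior region of $J$ and $w_2$ in the exterior, so any path in $W$ from $w_1$ to $w_2$ must meet $J$. At a point of the open segments $(A,T_1)$, $(A,T_2)$, or at $A$ itself, $J$ runs along the boundary of the wedge and the exterior of $J$ lies locally outside $W$, so a path confined to $W$ cannot escape there; it can only pass to the exterior through the closed near arc, which is contained in $p$ --- precisely your ``pinch point'' requirement, since $T_1,T_2\in p$. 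Two cosmetic repairs: $w_1$ and $w_2'$ are open only \emph{relative} to $W\setminus p$ (they contain wedge-boundary points), which is all your disconnection of $y\setminus p$ actually needs; and if you define $w_1,w_2'$ as connected components of $W\setminus p$, that relative openness is automatic by local connectivity, while your claim that there are \emph{exactly} two components --- which you never use --- can be dropped in favor of the weaker statement that $w_1$ and $w_2$ lie in different components.
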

\begin{figure}[H]
\centering
\vspace{1.5mm}
\includegraphics[width=0.27\textwidth]{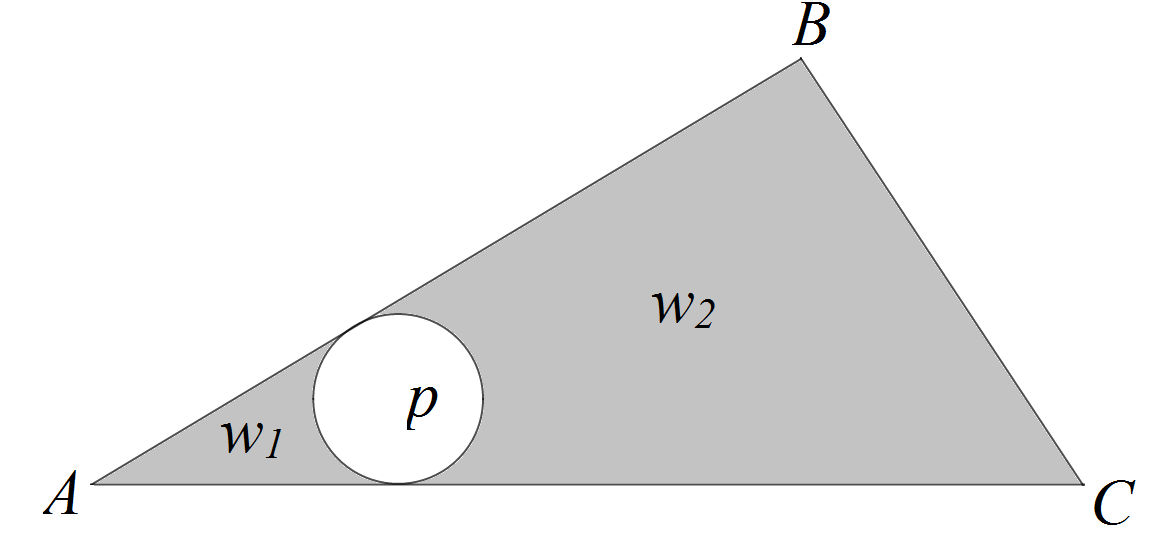}  
\caption{ }
  \label{fig:PinAngle}
\end{figure}

Note that the version of this statement is still true and also addressed in \cite{AdBo19}, when circle $p$ touches two parallel lines $(AB)$ and $(A'C)$, and $w_1$, $w_2$ denote two disjoint parts of a strip between these lines.

The next result was a crucial observation about tight implication for circles from \cite{Poly20}. The underlying closure operator here is $\ch_c$.\\

\begin{definition}
Let $(X,\varphi)$ be a closure system. An implication $Y\to u$, $Y\subseteq X, u\in X$, is called \emph{tight}, if implications $(Y\setminus \{z\}) \to u$ do not hold, for all $z \in Y$. We will call implication $Y\to u$ \emph{loose}, if $u\in \varphi(Y)$ and it may not be tight.
\end{definition}

\begin{lemma}[The Triangle Property]\label{triangle}
If $a,b,c,$ and $e$ are circles in the plane with centers $A,B,C,$ and $E$ respectively, and $abc\to e$ is a tight implication, then $E$ must lie in the interior of triangle formed by $A$,$B$ and $C$.
\end{lemma}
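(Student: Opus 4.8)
The plan is to argue by contradiction: assume the center $E$ does \emph{not} lie in the interior of $\triangle ABC$ and show that the implication $abc\to e$ cannot be tight. Throughout I would describe each disk by its support function: for a disk $d$ with center $D$ and radius $r_d$ and a unit direction $u$, set $f_d(u)=\langle D,u\rangle+r_d$, the signed position of the supporting line of $d$ with outer normal $u$. Since the convex hull of a union of disks has support function equal to the pointwise maximum of the individual ones, the containment $\tilde e\subseteq\CH(a,b,c)$ is equivalent to $f_e(u)\le\max\{f_a(u),f_b(u),f_c(u)\}$ for every $u$, and likewise $\tilde e\subseteq\CH(x,y)$ is equivalent to $f_e(u)\le\max\{f_x(u),f_y(u)\}$ for every $u$. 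In this language tightness says exactly that for each $d\in\{a,b,c\}$ there is a direction $u_d$ at which $d$ is the unique disk reaching as far out as $e$: $f_d(u_d)\ge f_e(u_d)$ while the other two disks have support strictly below $f_e(u_d)$.

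Next I would record the reduction of the geometric conclusion. Writing $w_d=D-E$ for the vector from $E$ to each center, the condition $E\in\operatorname{int}\triangle ABC$ is equivalent to $0\in\operatorname{int}\chull(w_a,w_b,w_c)$, which in turn says that no closed half-plane with $E$ on its boundary contains all of $A,B,C$; that is, for every direction $u$ some center satisfies $\langle w_d,u\rangle>0$. Thus the negation of the conclusion hands us a single direction $u_0$ with $\langle D,u_0\rangle\le\langle E,u_0\rangle$ for all three centers, i.e.\ a line through $E$ with $A,B,C$ all (weakly) on the far side. Note that a tight implication forces $A,B,C$ to be noncollinear (otherwise the hull of $a,b,c$ flattens and $e$ is already caught by two of the three), so $\triangle ABC$ is genuinely two–dimensional and this reformulation is valid.

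The heart of the argument is to convert the separating direction $u_0$ into a failure of tightness, split according to where $E$ sits relative to the three lines carrying the edges of $\triangle ABC$. In the \emph{edge case}, where $E$ lies on the far side of line $BC$ from $A$, the target is $\tilde e\subseteq\CH(b,c)$: in every direction $u$ in which $a$ is the strict supporter of the hull (such $u$ point away from edge $BC$, so $E$ is ``behind'' and $f_e(u)$ is small), one verifies $f_e(u)\le\max\{f_b(u),f_c(u)\}$, so $a$ is never needed and $bc\to e$ already holds, contradicting tightness. In the \emph{vertex case}, where $E$ lies beyond vertex $A$ between the extensions of $AB$ and $AC$, the tip $E+r_e u_0$ of $e$ must still lie in the hull while being at least as far out as $A$; this confines $e$ to the cap of the hull near $a$, and I would apply Lemma~\ref{Lemma42} (with $p=e$ inscribed in the angle at $A$ cut out by the two relevant outer tangents), together with Lemma~\ref{3 circles}, to conclude that $e$ is swallowed by $a$ and a single neighbor, i.e.\ $\tilde e\subseteq\CH(a,b)$ or $\tilde e\subseteq\CH(a,c)$ (or even $\tilde e\subseteq\tilde a$), once more contradicting tightness.

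The step I expect to be the main obstacle is the vertex case with a large disk: when $a$ is much bigger than $b$ and $c$, its boundary arc dominates a wide range of normals, and a small $e$ can lodge in the hull near $A$ in ways that make it delicate to decide which neighbor combines with $a$ to cover $e$. Handling this requires carefully comparing the outer common tangents of $a$ with $b$ and with $c$ and invoking the angle lemma on the correct side; the boundary subcase, where $E$ lies exactly on an edge line, must also be absorbed by a limiting argument. Once these configurations are settled, every placement of $E$ outside $\operatorname{int}\triangle ABC$ produces a containing pair (or singleton), so $abc\to e$ fails to be tight, and the contrapositive yields the theorem.
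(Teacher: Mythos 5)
A preliminary point: the paper does not actually prove Lemma \ref{triangle} --- it is quoted as a known result from \cite{Poly20} (``The next result was a crucial observation about tight implication for circles from [Poly20]''), so there is no in-paper proof to match your argument against; your proposal has to stand on its own. Its scaffolding does: the support-function dictionary $f_d(u)=\langle D,u\rangle+r_d$, the equivalence of $\tilde e\subseteq \CH(\cdot)$ with the pointwise inequality of support functions, the reformulation of tightness (for each $d$ a direction $u_d$ in which $d$ alone reaches as far out as $e$), and the separating-direction reformulation of $E\notin\operatorname{int}\triangle ABC$ are all correct, as is the claim that tightness forces $A,B,C$ to be noncollinear.

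The genuine gap is in your edge case, and it is not a missing detail but a false intermediate claim. You assert that if $E$ lies on the far side of line $(BC)$ from $A$ then $bc\to e$ already holds, on the grounds that directions $u$ in which $a$ strictly supports the hull ``point away from edge $BC$.'' That fails when the radii are unequal. Take $B=(-1,0)$, $C=(1,0)$, $r_b=r_c=0.1$, and a large circle $a$ with $A=(0,100)$, $r_a=50$. The common outer tangent of $a$ and $b$ has a \emph{downward}-pointing normal $u_1\approx(-0.87,-0.49)$, and for $u^*\approx(-0.97,-0.24)$, also aimed into the far half-plane of $(BC)$, one computes $f_a(u^*)\approx 25.9$ versus $f_b(u^*)\approx 1.07$, so $a$ is the strict supporter in a direction pointing ``behind'' $BC$. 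Concretely, between the circle $b$ and that tangent line there is a sliver of $\CH(a,b,c)$ below the $x$-axis but outside $\CH(b,c)$: the point $P\approx(-1.101,-0.025)$ lies on the tangent (hence in the hull), has negative $y$-coordinate, and is at distance $\approx 0.104>0.1$ from the segment $[BC]$. Setting $e=\{P\}$ gives a center $E$ strictly beyond line $(BC)$ for which $bc\to e$ is false; tightness still fails, but via $ab\to e$, since the sliver sits inside $\CH(a,b)$. So the correct edge-case conclusion is the disjunction ``$\tilde e\subseteq\CH(b,c)$ or $\CH(a,b)$ or $\CH(a,c)$,'' which your one-line support-function verification cannot deliver: you are forced into exactly the two-tangent/region analysis this paper develops for analogous statements (Lemma \ref{Lemma42}, Lemma \ref{W1}, and the configuration trichotomy of Lemma \ref{3 circles}; compare the proof of Lemma \ref{abc}). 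Your vertex case, which you yourself flag as the main obstacle, is likewise only a plan; moreover its appeal to Lemma \ref{Lemma42} presupposes that $e$ actually touches both sides of the relevant angle, which fails for an $e$ strictly interior to the cap near $a$, so a limiting or enlarging argument is needed there as well. As written, the proposal is a sensible strategy outline containing one provably wrong step and one unproven case, not a proof.
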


\begin{lemma}\label{OpAngle}
Given points $A,B,C,E$ in the plane, for $E$ to lie in the interior of the triangle formed by $A$,$B$ and $C$, $C$ must be located in the opposite angle to $\angle AEB$.
\end{lemma}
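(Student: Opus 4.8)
The plan is to translate the geometric hypothesis into barycentric coordinates and then read off the angular conclusion from a single vector identity. First I would record the standard fact that $E$ lies in the interior of the (non-degenerate) triangle $ABC$ if and only if $E$ is a \emph{strict} convex combination of the vertices, i.e.
\[
E = \lambda_A A + \lambda_B B + \lambda_C C, \qquad \lambda_A,\lambda_B,\lambda_C > 0, \qquad \lambda_A+\lambda_B+\lambda_C = 1.
\]
Here $A,B,C$ are affinely independent, so these coefficients exist and are unique; strict positivity of all three is exactly the interiority condition.

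Next I would recenter this identity at the apex $E$. Writing $E = (\lambda_A+\lambda_B+\lambda_C)E$ and subtracting, one gets $\lambda_A (A-E) + \lambda_B (B-E) + \lambda_C (C-E) = 0$, and hence
\[
\lambda_C\,(C-E) = -\lambda_A\,(A-E) - \lambda_B\,(B-E).
\]
Since $\lambda_A,\lambda_B,\lambda_C$ are all strictly positive, this exhibits the vector $C-E$ as a strictly positive linear combination of $-(A-E)$ and $-(B-E)$. Equivalently, $C-E$ lies in the open convex cone spanned by the vectors $-(A-E)$ and $-(B-E)$.

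The final step is to identify this cone with the claimed angular region. By definition, $\angle AEB$ is the open cone at $E$ spanned by the two direction vectors $A-E$ and $B-E$ (the rays from $E$ toward $A$ and toward $B$). Its opposite, or vertical, angle is obtained by reflecting both bounding rays through $E$, i.e. it is the cone spanned by $-(A-E)$ and $-(B-E)$ --- which is precisely the cone produced above. As $C-E$ lies in this cone, the point $C$ sits on a ray emanating from $E$ into the opposite angle, which is exactly the assertion.

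I expect the only point requiring care --- rather than a genuine obstacle --- to be the degenerate configurations. One must verify that $A-E$ and $B-E$ are linearly independent, so that the cone (and hence $\angle AEB$ itself) is honestly two-dimensional and its ``opposite angle'' is well defined; this holds because a strictly interior point $E$ cannot lie on the line $AB$, so $A$, $E$, $B$ are never collinear. With this non-degeneracy in hand, the cone description is unambiguous and its identification with the vertical angle is immediate.
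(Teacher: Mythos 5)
Your proof is correct. Note that the paper offers no proof at all for this lemma: it is stated as an evident fact and accompanied only by a figure shading the admissible region for $C$, so your argument supplies rigor where the paper relies on a picture. The barycentric route is the natural one: writing $E=\lambda_A A+\lambda_B B+\lambda_C C$ with $\lambda_A,\lambda_B,\lambda_C>0$ and recentering at $E$ gives
\[
C-E \;=\; \tfrac{\lambda_A}{\lambda_C}\,(E-A)\;+\;\tfrac{\lambda_B}{\lambda_C}\,(E-B),
\]
with strictly positive coefficients, which places $C-E$ in the open cone spanned by $E-A$ and $E-B$, i.e.\ the vertical angle to $\angle AEB$ --- exactly the claim. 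Your degeneracy check is also the right one: an interior point of the triangle lies strictly on one side of the line $(AB)$, so $A,E,B$ are never collinear and the cone is genuinely two-dimensional, making ``opposite angle'' well defined. Two small observations: the lemma asserts only the necessity direction, which is what you prove, and that suffices for its use in the paper (in Theorems \ref{OppProp} and \ref{G14} it is invoked precisely to constrain where $C$ may lie); and your computation is in fact reversible --- from $C-E=s(E-A)+t(E-B)$ with $s,t>0$ one recovers $E$ as a strict convex combination of $A,B,C$ --- so you get the equivalence for free, a slight strengthening the paper neither states nor needs.
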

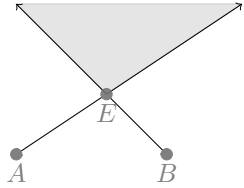
\begin{figure}[H]
    \centering
        \begin{tikzpicture}
            \draw[<->] (-1,-1) -- (2,1);
            \filldraw [gray] (-1,-1) circle[radius=0.075] node[anchor=north] {$A$};
            \draw[<->] (1,-1) -- (-1,1);
            \filldraw [gray] (1,-1) circle[radius=0.075] node[anchor=north] {$B$};
            \filldraw [gray] (1/5,-1/5) circle[radius=0.075] node[anchor=north] {$E$};
            \draw[opacity=0.2,fill=gray] (2,1) -- (1/5,-1/5) -- (-1,1) -- (2,1);
        
        \end{tikzpicture}
    \caption{Possible locations for $C$ shaded}
    \label{fig:1A}
\end{figure}

We add one new observation we will need later  in our paper.

\begin{lemma}\label{Overlap}
Let circles $a,b,c$ be in configuration (3) of Lemma \ref{3 circles}. If $a,c$ overlap, then $\CH(a,b,c) = \CH(b,c)\cup \CH (a,b)\cup \CH(a,c)$.
\end{lemma}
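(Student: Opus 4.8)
The inclusion $\CH(a,b)\cup\CH(b,c)\cup\CH(a,c)\subseteq\CH(a,b,c)$ is immediate from monotonicity of the convex hull, so the whole content lies in the reverse inclusion. My plan is to fix a point $P\in\CH(a,b,c)$ and show it lies in one of the three pairwise hulls. If $P\in\CH(a,c)$ we are done, so assume not. Since $a$ and $c$ overlap, fix a point $q\in\tilde a\cap\tilde c$; the key feature of $q$ is that $q\in\CH(a,b)\cap\CH(b,c)$, because $q\in\tilde a\subseteq\CH(a,b)$ and $q\in\tilde c\subseteq\CH(b,c)$. Thus the convex set $K:=\CH(a,b)\cap\CH(b,c)$ contains both $\tilde b$ and $q$.

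Next I would use the identity $\CH(a,b,c)=\CH\big(\CH(a,b)\cup\CH(b,c)\big)$, which holds because both pairwise hulls lie inside $\CH(a,b,c)$ while their union already contains $\tilde a\cup\tilde b\cup\tilde c$. Since the convex hull of a union of two convex sets is the union of all segments joining them, $P$ lies on a segment $[w_1,w_2]$ with $w_1\in\CH(a,b)$ and $w_2\in\CH(b,c)$. The decisive observation is that if this segment meets $K$ at a point $z$, then $[w_1,z]\subseteq\CH(a,b)$ and $[z,w_2]\subseteq\CH(b,c)$ (each is a segment between two points of a convex set), so $P\in[w_1,w_2]\subseteq\CH(a,b)\cup\CH(b,c)$ and we are done. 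Hence the proof reduces to producing a representation $P\in[w_1,w_2]$ whose bridging segment crosses $K$; equivalently, to showing that the ``hole'' $\CH(a,b,c)\setminus(\CH(a,b)\cup\CH(b,c))$ is contained in $\CH(a,c)$.

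To establish this I would argue by contradiction through a separating line. If $P$ lies in the hole and outside $\CH(a,c)$, choose a line $\ell$ strictly separating $P$ from the convex set $\CH(a,c)$, so that $\tilde a,\tilde c$ and in particular $q$ lie on one side and $P$ on the other; since $P\in\CH(a,b,c)$, the circle $b$ must protrude to the $P$-side of $\ell$. Writing $P=\alpha p_a+\beta p_b+\gamma p_c$ with $p_a\in\tilde a,\ p_b\in\tilde b,\ p_c\in\tilde c$ and all coefficients positive (otherwise $P$ already lies in a pairwise hull), the point $P$ sits strictly inside the triangle $p_ap_bp_c$, while $q$ and the chord $[p_a,p_c]$ lie on the far side of $\ell$. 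The idea is to route the ``$a,c$-mass'' through $q$ along the broken path $p_a\to q\to p_c$, which stays inside $\tilde a\cup\tilde c$ because $[p_a,q]\subseteq\tilde a$ and $[q,p_c]\subseteq\tilde c$, and then to show that the segment from $p_b$ through $P$ must cross this broken path; splitting at the crossing point re-expresses $P$ as a convex combination of $p_b$ and a single point of $\tilde a$ (giving $P\in\CH(a,b)$) or of $\tilde c$ (giving $P\in\CH(b,c)$), the desired contradiction.

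The crux, and the step I expect to be the main obstacle, is proving that this crossing is forced, i.e.\ controlling on which side of the line $(p_ap_c)$ the overlap lens $\tilde a\cap\tilde c$ falls. Here I would invoke that $a,b,c$ are in configuration (3) of Lemma~\ref{3 circles}, which rules out the degenerate ``one circle in the center'' arrangements, together with the quantitative meaning of overlap, $|AC|<r_a+r_c$. The latter forces the external tangent of the pair $a,c$ facing $b$ to be pushed past the central region that $\CH(a,b)$ and $\CH(b,c)$ fail to cover, so that the lens $\tilde a\cap\tilde c$ sits on the correct side of the chord and the hole collapses into $\CH(a,c)$. Once the crossing is secured, the segment-splitting of the previous paragraph completes the reverse inclusion, and hence the claimed equality.
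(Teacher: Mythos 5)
Your reduction is sound as far as it goes: the lemma is indeed equivalent to showing that the ``hole'' $\CH(a,b,c)\setminus(\CH(a,b)\cup\CH(b,c))$ lies in $\CH(a,c)$, and your segment-splitting observation (a bridging segment that meets $\CH(a,b)\cap\CH(b,c)$ decomposes into a piece in each hull) is correct. But the step you yourself flag as the crux --- that the segment from $p_b$ through $P$ must cross the broken path $p_a\to q\to p_c$ --- is a genuine gap, and it carries the entire geometric content of the lemma. The points $p_a,p_c$ come from a Carath\'eodory representation of $P$ and are dictated by $P$, not chosen by you; the lens $\tilde a\cap\tilde c$ can lie entirely on the opposite side of the chord $(p_ap_c)$ from $p_b$ (take $p_a,p_c$ on the sides of their circles facing $b$, with the lens tucked behind the chord). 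In that branch the segment from $p_b$ to its exit point $s\in[p_a,p_c]$ stays on $p_b$'s side of the chord and never meets the broken path, so the splitting argument produces nothing, and your separating line $\ell$ does not exclude this configuration. The closing appeal to ``the external tangent of the pair $a,c$ being pushed past the central region'' is an assertion, not an argument: no tangent line is actually introduced, and neither configuration 3 nor $|AC|<r_a+r_c$ gives any control over where the lens sits relative to an arbitrary chord $[p_a,p_c]$.

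The paper closes exactly this gap by projecting from the \emph{center} $B$ rather than from a Carath\'eodory point $p_b$, which makes the crossing automatic. First, every point of $\CH(a,b,c)$ outside the triangle of centers $\triangle ABC$ already lies in one of the three pairwise hulls, so one may assume the point $D$ lies in $\triangle ABC$. Second --- and this is where the overlap hypothesis enters, in place of your broken path --- overlap forces the two radii of $a$ and $c$ along the line $(AC)$ to cover the whole segment: $[AC]\subseteq \tilde a\cup\tilde c$, since a point of $[AC]$ at distance $t>r_a$ from $A$ is at distance $|AC|-t<r_c$ from $C$. The ray from $B$ through $D$ then meets $[AC]$ at a point $P'$ lying in $\tilde a$ or in $\tilde c$, and since $B\in\tilde b$ and $D\in[BP']$, we get $D\in\CH(a,b)$ or $D\in\CH(b,c)$. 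In other words, replacing your chord-plus-lens route by the straight segment of centers, which overlap covers \emph{entirely}, is the missing idea: with $B,A,C$ in place of $p_b,p_a,p_c$, your ``forced crossing'' becomes the trivial fact that the point where the ray exits $\triangle ABC$ through $[AC]$ already belongs to $\tilde a\cup\tilde c$.
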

\begin{proof}
Let $A,B,C$ be centers of circles $a,b,c$, respectively. Every point in $\CH(a,b,c)$, which is not in $\triangle ABC$, will belong to one of $\CH(b,c)$,$\CH (a,b)$, $\CH(a,c)$. Therefore, we only need to consider a point $D \in \triangle ABC$. Since $a,c$ overlap, segment $[AC]$ is covered by a union of two radii segments on the line $(AC)$: radius $[AA_1]$ of circle $a$ and radius $[CC_1]$ of circle $c$.
Let $(BD)$ intersect $[AC]$ at point $P$. Then either $P\in [AA_1]\subseteq \tilde{a}$, or $P\in [CC_1]\subseteq \tilde{c}$. Since $D \in [BP]$, we have $D \in \CH(b,a)$ or $D\in \CH(b,c)$.
\end{proof}

\section{Opposite Property}

In this section we show that circle configurations on the plane satisfy the property that we call the Opposite Property. It shows that 7 geometries that were included in the list of \emph{impossible geometries} in paper \cite{Poly20}  cannot be represented by circles on the plane.

\medskip
\begin{theorem}\label{OppProp}
If $bcd\to e$ is a tight implication and $ab\to e$, $ac\to e$, and $ad\to e$, then $a\to e$.
\end{theorem}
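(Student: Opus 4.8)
The plan is to translate the entire statement into the language of support functions, where containment of one circle in a convex region becomes a pointwise scalar inequality, and then to reduce to a purely combinatorial fact about arcs on a circle. For a disk $x$ with center $X$ and radius $r_x$, write its support function $h_x(u) = \langle X,u\rangle + r_x$ for a unit direction $u$; parametrizing $u$ by an angle $\theta$, each $h_x$ is a sinusoid plus a constant. The elementary facts I would record first are that for a convex set $S$ one has $e \subseteq S$ iff $h_e \le h_S$ pointwise, and that $h_{\CH(y_1,\dots,y_k)} = \max_i h_{y_i}$. With these, every hypothesis and the conclusion become inequalities between maxima of the five functions $h_a,h_b,h_c,h_d,h_e$.

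Concretely, I would introduce for each $x\in\{a,b,c,d\}$ the \emph{excess set} $I_x = \{\theta : h_e(\theta) > h_x(\theta)\}$, and observe that since $h_e - h_x$ is again a sinusoid, $I_x$ is a single (possibly empty) arc on the direction circle. Then I would decode the hypotheses: from $ab\to e$, $ac\to e$, $ad\to e$ (that is, $h_e \le \max(h_a,h_x)$ for $x\in\{b,c,d\}$) it follows that on $I_a$ each of $h_b,h_c,h_d$ dominates $h_e$, so $I_a$ is disjoint from $I_b, I_c, I_d$; from $bcd\to e$ (that is, $h_e \le \max(h_b,h_c,h_d)$) it follows that $I_b\cap I_c\cap I_d = \emptyset$; and from tightness ($e\not\subseteq\CH(b,c)$, etc.) it follows that $I_b\cap I_c$, $I_b\cap I_d$, and $I_c\cap I_d$ are each nonempty. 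The goal $a\to e$ is exactly the assertion $I_a=\emptyset$.

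The crux is then a Helly-type statement on the circle: three arcs that pairwise intersect but have empty common intersection must cover the whole circle. I would prove this by contradiction — if a point $p$ were uncovered, then cutting the circle open at $p$ turns the three arcs into three genuine intervals on a line that still pairwise intersect, so one-dimensional Helly forces a common point, contradicting the empty triple intersection. Applying this to $I_b, I_c, I_d$ gives $I_b\cup I_c\cup I_d = \mathbb{S}^1$; since $I_a$ avoids all three, $I_a=\emptyset$, i.e. $h_e\le h_a$ everywhere, i.e. $e\subseteq a$, which is exactly $a\to e$.

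The step I expect to carry the real weight is the circle-covering lemma, because it is precisely the failure of the Helly property on $\mathbb{S}^1$ that converts ``tightness'' (pairwise overlaps with no common overlap) into a global covering, and hence forces the excess of $e$ over $a$ to vanish; getting the open/closed boundary bookkeeping right (strict versus non-strict inequalities at arc endpoints) is the only delicate part. I note that the paper's surrounding machinery — the Triangle Property (Lemma \ref{triangle}) and the Opposite-Angle Lemma (Lemma \ref{OpAngle}) — suggests an alternative, more directly geometric route phrased in terms of the centers $A,B,C,D,E$ and tangent-line configurations; but the support-function/arc formulation above seems both shorter and less case-dependent, so that is the route I would pursue.
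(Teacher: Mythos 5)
Your proposal is correct, and it takes a genuinely different route from the paper. The paper's proof of Theorem \ref{OppProp} works in the primal plane: it uses the Triangle Property (Lemma \ref{triangle}) to place $E$ inside the triangle $BCD$, erects the lines $\ell$, $\ell'$ perpendicular to $(AE)$, and runs a quadrant-by-quadrant case analysis with auxiliary circles $b'$, $c'$ and common tangents to force $B$, $C$, $D$ all into one open quadrant, contradicting $E \in \operatorname{int}\triangle BCD$; the degenerate case $A = E$ is split off separately. Your support-function translation replaces all of this: each hypothesis becomes a pointwise inequality $h_e \le \max_i h_{y_i}$, so $ab\to e$, $ac\to e$, $ad\to e$ give $I_a \cap I_x = \emptyset$ for $x \in \{b,c,d\}$, the implication $bcd\to e$ gives $I_b \cap I_c \cap I_d = \emptyset$, tightness gives the three pairwise intersections nonempty, and your covering lemma (cut at an uncovered point and apply one-dimensional Helly to the resulting open intervals, which is valid since pairwise intersecting intervals $(a_i,b_i)$ satisfy $\max_i a_i < \min_j b_j$) yields $I_b \cup I_c \cup I_d = \mathbb{S}^1$, hence $I_a = \emptyset$, i.e.\ $\tilde{e} \subseteq \tilde{a}$, which is exactly $a \to e$. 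I checked the translation: $h_e - h_x = \langle E-X, u\rangle + (r_e - r_x)$ is a sinusoid plus a constant, so each $I_x$ is indeed an open arc (possibly empty, the whole circle, or the circle minus a point, and each of these cases is harmless in the cutting argument), and the zero-radius and $A=E$ degeneracies that the paper treats separately need no special handling in your framework. What each approach buys: the paper's argument stays within the tangent-line and center toolkit reused throughout the paper, whereas yours is shorter, essentially case-free, and isolates exactly where circularity enters --- only in the fact that support-function differences of disks are sinusoids plus constants, so that the excess sets are arcs --- which makes transparent that the Opposite Property is at bottom a failure-of-Helly phenomenon on $\mathbb{S}^1$ and indicates to which other families of convex bodies it extends. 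In a final write-up you should state explicitly the small lemma that the strict superlevel set $\{\theta : R\cos(\theta - \theta_0) + c > 0\}$ is an open arc in every case, since that is the single point on which the rest hinges.
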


\begin{proof}
Suppose the hypotheses hold, and let $A,B,C,D,E$ be the centers of circles $a,b,c,d,e$ respectively. If $A=E$  then either $a\to e$, in which case the theorem holds, or $e\to a$, in which case $ab\to e$ is only possible if $b\to e$, but then $bcd\to e$ is not tight. If $A \neq E$, we have the following picture, where the distance between $E$ and $E'$ is the radius of $e$ (i.e. $E'$ is the point in $e$ farthest away from $A$; note that $E$ and $E'$, and thus $\ell$ and $\ell'$, are not necessarily distinct):

\begin{figure}[H]
    \centering
        \begin{tikzpicture}
            \draw[thick, <->] (0,0) -- (4.5,0);
            \filldraw [gray] (1,0) circle[radius=0.075] node[anchor=south] {$A$};
            \filldraw [gray] (3,0) circle[radius=0.075] node[anchor=south east] {$E$};
            \filldraw [gray] (4,0) circle[radius=0.075] node[anchor=south east] {$E'$};
            \draw[thick, <->] (3,-3) -- (3,3) node[anchor=south east] {$\ell$};
            \draw[<->] (4,-3) -- (4,3) node[anchor=south east] {$\ell'$};
            \draw (3,0.2) -- (3.2,0.2) -- (3.2,0);
            \draw (4,0.2) -- (4.2,0.2) -- (4.2,0);
        \end{tikzpicture}
    \label{fig:1B}
\end{figure}

$\ell'$ splits the plane into two semi-planes, and if any point in $a$ is in the semi-plane opposite $A$, then clearly $a\to e$ and the theorem holds. 

If $a$ is contained in the open semi-plane containing $A$, then in order for $ab\to e$, $ac\to e$, and $ad\to e$, $b,c$ and $d$ must all have at least one point in the semi-plane opposite $A$ (otherwise both circles will be contained in the same open semi-plane not containing $E'$, and thus their convex hull cannot contain $E'$). 

$\ell$ also splits the plane into two semi-planes and since, by the triangle property (lemma 5), $E$ is in the interior of the triangle formed by $B,C$ and $D$, at least one of $B,C,D$ must be contained in the same open semi-plane as $A$.
Without loss of generality, suppose this is true of $B$. $B$ cannot be on the ray from $E$ through $A$ (else $b\to e$, in which case $bcd\to e$ is not tight), so $B$ must be contained an open quadrant formed by $(AE)$ and $\ell$.

Call this quadrant ``quadrant I" (shaded below) and label the other quadrants II, III, and IV, according to the figure below and which semi-planes formed by $(AE)$ and $\ell$ they share with $B$. 

Since $b$ is a circle that must have at least one point in the semi-plane opposite $A$ with respect to $\ell'$, as noted above, $b$ must contain the point closest to $B$ in this semi-plane, namely the intersection of $\ell'$ and a line through $B$ parallel to $(AE)$, which we will call $B'$. Thus $b$ must at least contain the circle of minimal radius centered at $B$ that includes $B'$. 

Call this circle, pictured below with a dashed line, $b'$.

\begin{figure}[H]
    \centering
        \begin{tikzpicture}
            \filldraw [gray] (0,3) circle[radius=0.001] node[anchor=north west] {I};
            \filldraw [gray] (0,-3) circle[radius=0.001] node[anchor=south west] {II};
            \filldraw [gray] (4.8,-3) circle[radius=0.001] node[anchor=south east] {III};
            \filldraw [gray] (4.8,3) circle[radius=0.001] node[anchor=north east] {IV};
            \filldraw [draw opacity=0, fill opacity=0.05] (3,0) rectangle (0,3);
            \draw[thick, <->] (0,0) -- (4.5,0) node[anchor=west] {$(AE)$};
            \filldraw [gray] (2.5,2) circle[radius=0.075] node[anchor=north east] {$B$};
            \draw [dashed] (2.5,2) circle[radius=1.5];
            \filldraw [gray] (4,2) circle[radius=0.075] node[anchor=south west] {$B'$};
            \draw[<->] (0,2) -- (4.5,2);
            \draw (3,2.2) -- (3.2,2.2) -- (3.2,2.0);
            \filldraw [gray] (3,0) circle[radius=0.075] node[anchor=north east] {$E$};
            \draw (3,0) circle[radius=1];
            \filldraw [gray] (4,0) circle[radius=0.075] node[anchor=north west] {$E'$};
            \draw[thick, <->] (3,-3) -- (3,3) node[anchor=south] {$\ell$};
            \draw[<->] (4,-3) -- (4,3) node[anchor=south] {$\ell'$};
            \draw (3,0.2) -- (3.2,0.2) -- (3.2,0);
            \draw (4,0.2) -- (4.2,0.2) -- (4.2,0);
            \draw[<->] (3.75,-3) -- (2.25,3);
        \end{tikzpicture} \hfill
        \begin{tikzpicture}
            \filldraw [gray] (0,3) circle[radius=0.001] node[anchor=north west] {I};
            \filldraw [gray] (0,-3) circle[radius=0.001] node[anchor=south west] {II};
            \filldraw [gray] (4.8,-3) circle[radius=0.001] node[anchor=south east] {III};
            \filldraw [gray] (4.8,3) circle[radius=0.001] node[anchor=north east] {IV};
            \filldraw [draw opacity=0, fill opacity=0.05] (3,0) rectangle (0,3);
            \draw[thick, <->] (0,0) -- (4.5,0) node[anchor=west] {$(AE)$};
            \filldraw [gray] (2.8,0.5) circle[radius=0.075] node[anchor=north east] {$B$};
            \draw [dashed] (2.8,0.5) circle[radius=1.2];
            \filldraw [gray] (4,0.5) circle[radius=0.075] node[anchor=south west] {$B'$};
            \draw[<->] (0,0.5) -- (4.5,0.5);
            \draw (3,0.7) -- (3.2,0.7) -- (3.2,0.5);
            \filldraw [gray] (3,0) circle[radius=0.075] node[anchor=north east] {$E$};
            \draw (3,0) circle[radius=1];
            \filldraw [gray] (4,0) circle[radius=0.075] node[anchor=north west] {$E'$};
            \draw[thick, <->] (3,-3) -- (3,3) node[anchor=south] {$\ell$};
            \draw[<->] (4,-3) -- (4,3) node[anchor=south] {$\ell'$};
            \draw (3,0.2) -- (3.2,0.2) -- (3.2,0);
            \draw (4,0.2) -- (4.2,0.2) -- (4.2,0);
            \draw[<->] (4.2,-3) -- (1.8,3);
        \end{tikzpicture}
    \label{fig:2A}
\end{figure}

Now $(BE)$ splits the plane into two semi-planes, and since $E$ is in the interior of the triangle formed by $B,C$ and $D$, one of $C,D$ must be in the open semi-plane that does not contain $B'$, so without loss of generality suppose it is $C$. Note that, regardless of the placement of $B$, this semi-plane is contained within quadrants I, II, and III. As with $b$, the convex hull of $c$ also must contain the point closest to $C$  in the semi-plane opposite $A$ with respect to $\ell'$, namely the point where $\ell'$ and a line through $C$ parallel to $(AE)$ intersect, or $C$ itself if $C$ is already in this semi-plane. The convex hull of $c$ must at least contain the circle of minimal radius centered at $C$ that includes $C'$, which we will call $c'$, so the convex hull of $b'$ and $c'$ is contained in the convex hull of $b$ and $c$.

If $C$ is not in quadrant I, it must be contained in the open semi-plane opposite $B$ with respect to $(AE)$. First consider a placement of $C$ on the line $(BE)$ (although $C$ can in fact only be arbitrarily close to such a placement). In this case the convex hull of $b$ and $c$ is symmetric about $(BE)$. If $C$ is in the same semi-plane as $B$ with respect to $\ell'$, i.e. $C \neq C'$, then $B',E',$ and $C'$ are all on $\ell'$, so $b', e,$ and $c'$ all share a common tangent line. They must also share this common tangent reflected about $(BE)$, the line through their centers, so $e$ is in the convex hull of $b'$ and $c'$ and thus also of $b$ and $c$. 

\begin{figure}[H]
    \centering
        \begin{tikzpicture}
            \filldraw [gray] (0,3) circle[radius=0.001] node[anchor=north west] {I};
            \filldraw [gray] (0,-3) circle[radius=0.001] node[anchor=south west] {II};
            \filldraw [gray] (4.8,-3) circle[radius=0.001] node[anchor=south east] {III};
            \filldraw [gray] (4.8,3) circle[radius=0.001] node[anchor=north east] {IV};
            \draw[thick, <->] (0,0) -- (4.5,0) node[anchor=west] {$(AE)$};
            \filldraw [gray] (2.5,2) circle[radius=0.075] node[anchor=north east] {$B$};
            \draw [dashed] (2.5,2) circle[radius=1.5];
            \filldraw [gray] (4,2) circle[radius=0.075] node[anchor=south west] {$B'$};
            \draw[<->] (0,2) -- (4.5,2);
            \draw (3,2.2) -- (3.2,2.2) -- (3.2,2.0);
            \filldraw [gray] (3,0) circle[radius=0.075] node[anchor=north east] {$E$};
            \draw (3,0) circle[radius=1];
            \filldraw [gray] (4,0) circle[radius=0.075] node[anchor=north west] {$E'$};
            \draw[thick, <->] (3,-3) -- (3,3) node[anchor=south] {$\ell$};
            \draw[<->] (4,-3) -- (4,3) node[anchor=south] {$\ell'$};
            \draw (3,0.2) -- (3.2,0.2) -- (3.2,0);
            \draw (4,0.2) -- (4.2,0.2) -- (4.2,0);
            \draw[<->] (3.75,-3) -- (2.25,3); %BE
            \filldraw [gray] (3.5,-2) circle[radius=0.075] node[anchor=north east] {$C$};
            \draw [dashed] (3.5,-2) circle[radius=0.5];
            \filldraw [gray] (4,-2) circle[radius=0.075] node[anchor=north west] {$C'$};
            \draw[<->] (0,-2) -- (4.5,-2);
            \draw[<->] (20/17,22/17) -- (52/17,-38/17); %reflected tangent line, endpoints at intersections with b' and c'
        \end{tikzpicture} \hfill
        \begin{tikzpicture}
            \filldraw [gray] (0,3) circle[radius=0.001] node[anchor=north west] {I};
            \filldraw [gray] (0,-3) circle[radius=0.001] node[anchor=south west] {II};
            \filldraw [gray] (4.8,-3) circle[radius=0.001] node[anchor=south east] {III};
            \filldraw [gray] (4.8,3) circle[radius=0.001] node[anchor=north east] {IV};
            \draw[thick, <->] (0,0) -- (4.5,0) node[anchor=west] {$(AE)$};
            \filldraw [gray] (2.8,0.5) circle[radius=0.075] node[anchor=north east] {$B$};
            \draw [dashed] (2.8,0.5) circle[radius=1.2];
            \filldraw [gray] (4,0.5) circle[radius=0.075] node[anchor=south west] {$B'$};
            \draw[<->] (0,0.5) -- (4.5,0.5);
            \draw (3,0.7) -- (3.2,0.7) -- (3.2,0.5);
            \filldraw [gray] (3,0) circle[radius=0.075] node[anchor=north east] {$E$};
            \draw (3,0) circle[radius=1];
            \filldraw [gray] (4,0) circle[radius=0.075] node[anchor=north west] {$E'$};
            \draw[thick, <->] (3,-3) -- (3,3) node[anchor=south] {$\ell$};
            \draw[<->] (4,-3) -- (4,3) node[anchor=south] {$\ell'$};
            \draw (3,0.2) -- (3.2,0.2) -- (3.2,0);
            \draw (4,0.2) -- (4.2,0.2) -- (4.2,0);
            \draw[<->] (4.2,-3) -- (1.8,3);
            \filldraw [gray] (3.2,-0.5) circle[radius=0.075] node[anchor=south west] {$C$};
            \filldraw [gray] (4,-0.5) circle[radius=0.075] node[anchor=north west] {$C'$};
            \draw [dashed] (3.2,-0.5) circle[radius=0.8];
            \draw[<->] (56/29,-19/58) -- (4,-2.5); %reflected tangent line, endpoints at intersections with b' and c'
        \end{tikzpicture}
    \label{fig:2B}
\end{figure}

This argument still holds in the case where $C = C'$ is on $\ell'$, and from here moving $C$ further away from $B$ along $(BE)$ only adds the the convex hull of $b'$ and $C$, which already includes $e$. Thus all placements of $C$ on the line $(BE)$ lead to the implication $bc \to e$.

\begin{figure}[H]
    \centering
        \begin{tikzpicture}
            \filldraw [gray] (0,3) circle[radius=0.001] node[anchor=north west] {I};
            \filldraw [gray] (0,-3) circle[radius=0.001] node[anchor=south west] {II};
            %\filldraw [gray] (4.8,-3) circle[radius=0.001] node[anchor=south east] {III};
            \filldraw [gray] (4.8,3) circle[radius=0.001] node[anchor=north east] {IV};
            \draw[thick, <->] (0,0) -- (4.5,0) node[anchor=west] {$(AE)$};
            \filldraw [gray] (2.8,0.5) circle[radius=0.075] node[anchor=north east] {$B$};
            \draw [dashed] (2.8,0.5) circle[radius=1.2];
            \filldraw [gray] (4,0.5) circle[radius=0.075] node[anchor=south west] {$B'$};
            \draw[<->] (0,0.5) -- (4.5,0.5);
            \draw (3,0.7) -- (3.2,0.7) -- (3.2,0.5);
            \filldraw [gray] (3,0) circle[radius=0.075] node[anchor=north east] {$E$};
            \draw (3,0) circle[radius=1];
            \filldraw [gray] (4,0) circle[radius=0.075] node[anchor=north west] {$E'$};
            \draw[thick, <->] (3,-3) -- (3,3) node[anchor=south] {$\ell$};
            \draw[<->] (4,-3) -- (4,3) node[anchor=south] {$\ell'$};
            \draw (3,0.2) -- (3.2,0.2) -- (3.2,0);
            \draw (4,0.2) -- (4.2,0.2) -- (4.2,0);
            \draw[<->] (4.2,-3) -- (1.8,3);
            \filldraw [gray] (4,-2.5) circle[radius=0.075] node[anchor=south west] {};
            \filldraw [gray] (4.2,-3) circle[radius=0.075] node[anchor=south west] {$C$};
            %maybe label as C_1 and C_2?
            \draw (56/29,-19/58) -- (4,-2.5);
            %reflected tangent line, endpoints at intersections with b' and c'
            \draw [dashed] (56/29,-19/58) -- (4.2,-3);
            \draw [dashed] (4,0.5) -- (4.2,-3);
        \end{tikzpicture}
    \label{fig:2C}
\end{figure}

Similarly, moving $C$ horizontally off the line $(BE)$ into the opposite semi-plane as $B'$ with respect to $(BE)$, where it is assumed to be, only adds to the convex hull of $b'$ and $c'$.

\begin{figure}[H]
    \centering
        \begin{tikzpicture}
            \filldraw [gray] (0,3) circle[radius=0.001] node[anchor=north west] {I};
            \filldraw [gray] (0,-3) circle[radius=0.001] node[anchor=south west] {II};
            \filldraw [gray] (4.8,-3) circle[radius=0.001] node[anchor=south east] {III};
            \filldraw [gray] (4.8,3) circle[radius=0.001] node[anchor=north east] {IV};
            \draw[thick, <->] (0,0) -- (4.5,0) node[anchor=west] {$(AE)$};
            \filldraw [gray] (2.5,2) circle[radius=0.075] node[anchor=north east] {$B$};
            \draw [dashed] (2.5,2) circle[radius=1.5];
            \filldraw [gray] (4,2) circle[radius=0.075] node[anchor=south west] {$B'$};
            \draw[<->] (0,2) -- (4.5,2);
            \draw (3,2.2) -- (3.2,2.2) -- (3.2,2.0);
            \filldraw [gray] (3,0) circle[radius=0.075] node[anchor=north east] {$E$};
            \draw (3,0) circle[radius=1];
            \filldraw [gray] (4,0) circle[radius=0.075] node[anchor=north west] {$E'$};
            \draw[thick, <->] (3,-3) -- (3,3) node[anchor=south] {$\ell$};
            \draw[<->] (4,-3) -- (4,3) node[anchor=south] {$\ell'$};
            \draw (3,0.2) -- (3.2,0.2) -- (3.2,0);
            \draw (4,0.2) -- (4.2,0.2) -- (4.2,0);
            \draw[<->] (3.75,-3) -- (2.25,3); %BE
            \filldraw [gray] (3.5,-2) circle[radius=0.075] node[anchor=north east] {};
            \draw [dashed] (3.5,-2) circle[radius=0.5];
            \filldraw [gray] (4,-2) circle[radius=0.075] node[anchor=north west] {$C'$};
            \draw[<->] (0,-2) -- (4.5,-2);
            %\filldraw [gray] (20/17,22/17) circle[radius=0.075] node[anchor=north west]{};
            %\filldraw [gray] (52/17,-38/17) circle[radius=0.075] node[anchor=north west]{};
            \draw[<->] (20/17,22/17) -- (52/17,-38/17); %reflected tangent line, endpoints at intersections with b' and c'
            \filldraw [gray] (3.2,-2) circle[radius=0.075] node[anchor=north east] {$C$};
            \draw [dashed] (3.2,-2) circle[radius=.8];
        \end{tikzpicture}
    \label{fig:2D}
\end{figure}

Therefore any possible placement of $C$ not in quadrant I leads to the strict implication $bc \to e$, so $C$ must be in quadrant I, as $B$ is. 

Now $(CE)$ splits the plane into two semi-planes, and since $E$ is in the interior of the triangle formed by $B,C$ and $D$, $D$ must be in the open semi-plane that does not contain $B$. But now we have already seen that such a placement of $D$ not in quadrant I leads to the implication $cd \to e$ (by $C$ playing the role of $B$ and $D$ playing the role of $C$ in the previous argument, so $D$ must also be in quadrant I. (Note that while $B$ was additionally shown to be in the open quadrant I, by the same argument used for $B$, $C$ also cannot be on the line $(AE)$ between $A$ and $E$, and this is all that is needed.) However, this makes it impossible for $E$ to be in the interior of the triangle formed by $B,C,$ and $D$, so there is no possible representation for the given implications without the additional implication $a \to e$.
\end{proof}

In the next statement we use the numbering of geometries given in \cite{Poly20} with their implications, but elements of geometry 74 have been relabeled to better fit the statement of Theorem \ref{OppProp}.
\begin{corollary}\label{List1}
The following geometries, identified here by their tight implications, are not representable by circles on the plane:

Geometry 46: $ab \to e$, $ac \to e$, $ad \to e$, and $bcd \to e$. 

Geometry 60: $ab \to e$, $ac \to e$, $ad \to e$, $abc \to d$, and $bcd \to e$. 

Geometry 74: $ab \to e$, $ac \to e$, $ad \to e$, and $bcd \to ae$. 

Geometry 84: $ab \to de$, $ac \to e$, $ad \to e$, and $bcd \to e$. 

Geometry 114: $ab \to de$, $ac \to de$, $ad \to e$, and $bcd \to e$. 

Geometry 115: $ab \to cde$, $ac \to e$, $ad \to e$, and $bcd \to e$.  

Geometry 153: $ab \to cde$, $ac \to de$, $ad \to e$, and $bcd \to e$.

\end{corollary}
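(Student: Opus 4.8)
The plan is to apply Theorem \ref{OppProp} directly to each of the seven geometries, using it as an obstruction rather than a construction: the theorem guarantees that \emph{any} circle representation realizing the hypotheses ``$bcd\to e$ is tight, together with $ab\to e$, $ac\to e$, $ad\to e$'' must also realize $a\to e$. So for each geometry on the list I would verify that these four hypotheses hold in the geometry while $a\to e$ \emph{fails}; since a circle representation would be forced to satisfy $a\to e$, no such representation can exist.

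First I would extract the three two-element implications into $e$. In each case $bcd\to e$ is present: it is listed directly for Geometries 46, 60, 84, 114, 115, 153, and for the relabeled Geometry 74 it sits inside $bcd\to ae$, whence $e\in\varphi(\{b,c,d\})$ follows immediately from the definition of implication. The implications $ab\to e$, $ac\to e$, $ad\to e$ appear either directly or as consequences of the larger listed implications: from $ab\to de$ (Geometries 84, 114) and $ab\to cde$ (Geometries 115, 153) one reads off $e\in\varphi(\{a,b\})$, hence $ab\to e$, and $ac\to e$ is likewise recovered from $ac\to de$ in Geometries 114 and 153. Thus in every case all three hypotheses $ab\to e$, $ac\to e$, $ad\to e$ are available.

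Next I would confirm the tightness of $bcd\to e$, i.e.\ that none of $bc\to e$, $bd\to e$, $cd\to e$ holds. Since these geometries are presented through their tight implications and $bcd\to e$ is itself listed as tight, this is immediate in the standard cases; for the relabeled Geometry 74 I would check directly that $e$ lies in none of $\varphi(\{b,c\})$, $\varphi(\{b,d\})$, $\varphi(\{c,d\})$. Tightness of $bcd\to e$ also rules out $b\to e$, $c\to e$, $d\to e$ by monotonicity of $\varphi$, which is exactly the hypothesis the proof of Theorem \ref{OppProp} relies on when it excludes placements of $B$, $C$, $D$ on the relevant rays.

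Finally I would observe that $a\to e$ does \emph{not} hold in any of these geometries: no listed tight implication has premise contained in $\{a\}$, so starting the closure from $\{a\}$ no implication ever fires, giving $\varphi(\{a\})=\{a\}$ and hence $e\notin\varphi(\{a\})$. Consequently, were any of these geometries representable by circles, Theorem \ref{OppProp} would force $a\to e$, contradicting $\varphi(\{a\})=\{a\}$; therefore none of the seven is representable by circles. The main thing to get right here is not a deep obstacle but careful bookkeeping of tightness and non-derivability for each geometry, and most delicately for the relabeled Geometry 74, where one must ensure the relabeling producing $bcd\to ae$ preserves both the tightness of the embedded $bcd\to e$ and the failure of $a\to e$.
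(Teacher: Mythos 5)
Your proposal is correct and is essentially the paper's own proof: the paper likewise verifies that $bcd\to e$ is tight (no proper subset of $bcd$ implies $e$), that $ab\to e$, $ac\to e$, $ad\to e$ are present in every listed geometry, and then invokes Theorem \ref{OppProp} to conclude that any circle representation would force the absent implication $a\to e$. Your write-up just makes explicit the bookkeeping (reading $ab\to e$ off $ab\to de$ or $ab\to cde$, and handling the relabeled Geometry 74 via $bcd\to ae$) that the paper compresses into two sentences.
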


\begin{proof}

In all of the above geometries, no proper subset of $bcd$ implies $e$, so this is a tight implication. Additionally, the implications $ab \to e$, $ac \to e$, and $ad \to e$ are all always present, so by theorem 4.1 any representation by circles on the plane would have the additional tight implication $a \to e$ not present in these geometries.
\end{proof}

\section{Area Q and non-representable geometries}

\subsection{Area \textit{Q} for binary hulls}\label{AreaQ}

In this section we consider simple facts about the convex hulls of two circles, which we call binary hulls. We investigate the possible configurations formed by two or three binary hulls involving one fixed circle $d$. The center of $d$ is denoted $D$, and similarly, we use $A,B,C$ for centers of circles $a,b,c$, respectively.

Throughout this section we will use the following labels and notation: for $u,x \in \{a,b,c,d\}$, with respect to a specified circle $u$ with center $U$, when a different circle $x$, with center $X$, is given, we will denote by $[UX)$ the ray from $U$ that goes through $X$. We denote $X_0=[UX)\cap Circ(u)$, and denote another point of intersection of the line $(UX)$ with $Circ(u)$ by $X_\infty$. Note that $X_0=X_{\infty}=U$ when $u$ is a point. 

For any center $X$ on segment $[UX_0]$, if the radius of $x$ is $\leq |XX_0|$, then $\tilde{x}\subseteq \tilde{u}$. When the radius of $x$ grows beyond $|XX_0|$, there will be exactly two common tangent lines to $u,x$ with touching points on $Circ(u)$. We will denote these two touching points $X_1,X_2$, and observe that they are symmetric around line $(UX)$.

As the radius of $x$ grows, $X_1,~X_2$ travel along corresponding halves of $Circ(u)$ from $X_0$ to $X_\infty$. When the radius reaches or surpasses $|XX_\infty|$, we obtain a configuration where $\tilde{u}\subseteq \tilde{x}$.

Similarly, for $X$ located on $[UX_0)\setminus [UX_0]$, we may start from $x=\{X\}$ and find touching points $X_1,X_2$ on $Circ(u)$ for tangents from $X$ to $u$, and as the radius of $x$ is allowed to grow, the touching points  travel toward point $X_\infty$ until $\tilde{u}\subseteq \tilde{x}$. 

For illustration with $u = d$ and $x = c$, see Fig. \ref{case2}.

Thus, whenever $\tilde{u}\not \subseteq \tilde{x}$ and $\tilde{x}\not \subseteq \tilde{u}$, there will be two lines tangent to both $u$ and $x$ with touching points $X_1,X_2$ on $Circ(u)$, symmetric around $(UX)$.

\begin{figure}[H]
\centering
\vspace{1.75mm}
\includegraphics[width=0.6\textwidth]{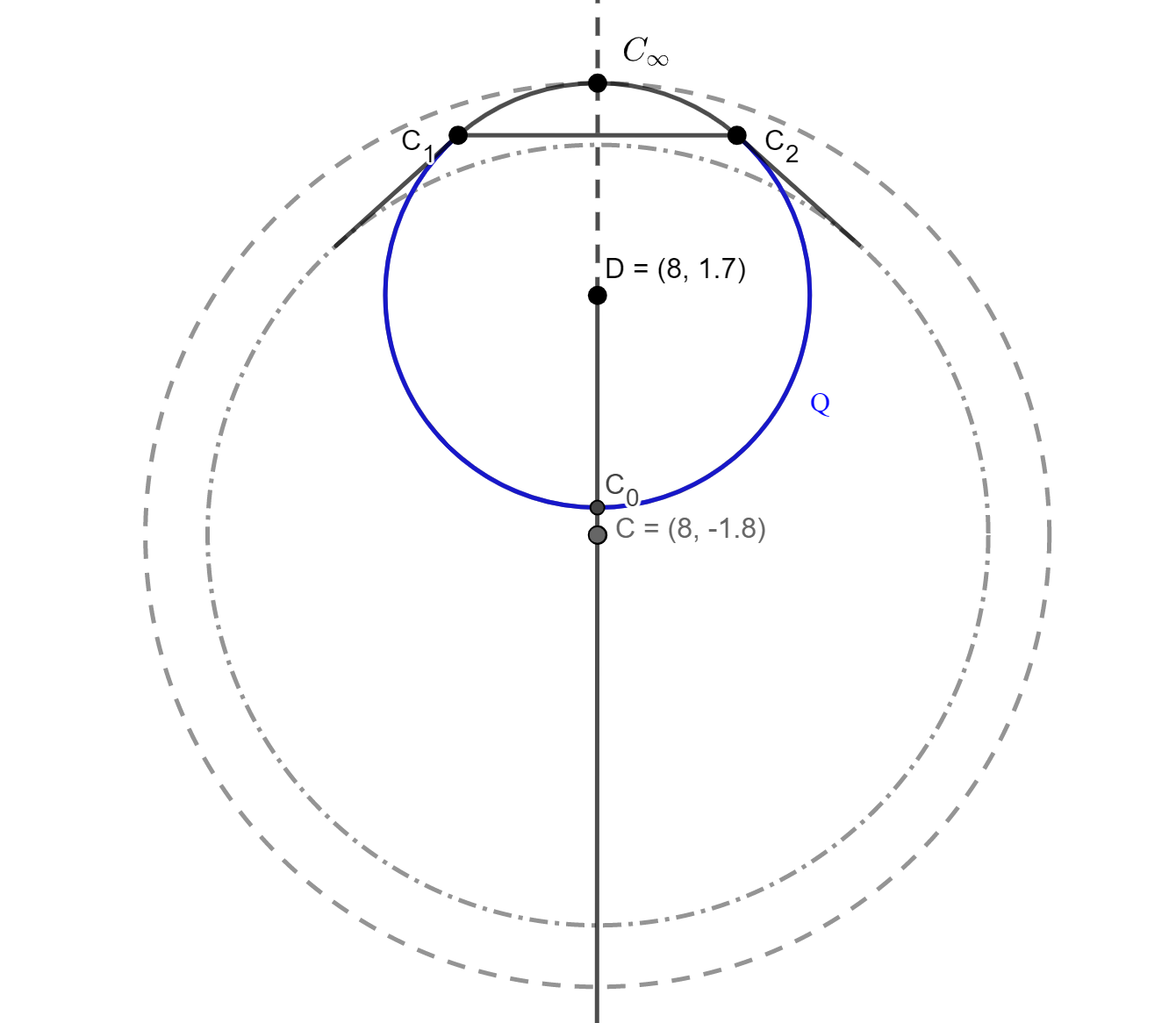}  
\caption{Location of $C_0,C_1,C_2,C_\infty$ on $Circ(d)$.}
  \label{case2}
\end{figure}

Now our goal is to establish some facts about the set of points

\[
Q_d(Y)=[\bigcap_{y\in Y} \CH(d,y)]\setminus \tilde{d},
\]
defined by some circle $d$ and set $Y$ of one, two or three additional circles. 

We start from $Y=\{c\}$, just a single circle, i.e. $Q_d(c)=\CH(d,c)\setminus \tilde{d}$.

We will assume that $c,d$ are not included within each other and thus we will have points $C_0,C_1,C_2, C_\infty$ on $Circ(d)$, which are 4 distinct points, unless $d=\{D\}$. To distinguish the two arcs connecting $C_1, C_2$ on $d$, we will include another point on the arc in the notation, in this case $\overset{\frown}{C_1C_0C_2}$ or $\overset{\frown}{C_1C_\infty C_2}$.

The following lemma links the properties of certain arcs with information about points in $Q_d(Y)$.

When we say that the set of points $S$ has points \emph{arbitrarily close to points of some set} $Z$, we mean that any open ball (standard open set of metric space $\mathbb{R}^2$) centered at $z\in Z$ has non-empty intersection with $S$.

\begin{lemma}\label{Q-one}
Suppose $\tilde{c}\not \subseteq \tilde{d}$, $\tilde{d} \not \subseteq \tilde{c}$ and $d\not = \{D\}$. Then 

\begin{enumerate}[(1)]
\item The arc $\overset{\frown}{C_1C_\infty C_2}$ is a part of the border of $\CH(c,d)$. 
\item $Q_d(c)$ has points arbitrarily close to all points of the arc $\overset{\frown}{C_1C_0C_2}$.

\end{enumerate}
\end{lemma}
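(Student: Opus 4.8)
The plan is to reduce both parts to the standard description of the boundary of the convex hull of two disks, neither of which contains the other. First I would record this structural fact: since $\tilde c\not\subseteq\tilde d$ and $\tilde d\not\subseteq\tilde c$, the two common (external) tangent lines exist, touching $Circ(d)$ exactly at the points $C_1,C_2$ of the preceding discussion, and the boundary $\partial\CH(c,d)$ decomposes into four pieces: one arc of $Circ(d)$, one arc of $Circ(c)$, and the two tangent segments joining them. Because $d\neq\{D\}$, the four points $C_0,C_1,C_2,C_\infty$ are distinct, so the two arcs $\overset{\frown}{C_1C_0C_2}$ and $\overset{\frown}{C_1C_\infty C_2}$ are genuine and complementary on $Circ(d)$, and exactly one of them is the boundary piece coming from $d$.

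For part (1) I would identify that boundary arc as the one through $C_\infty$ by a support-function argument. Let $\hat u$ be the unit vector pointing from $C$ to $D$. The point of $\tilde d$ maximizing $\hat u\cdot(\cdot)$ is $C_\infty$, with value $\hat u\cdot C + |DC| + r_d$, while the maximum of $\hat u\cdot(\cdot)$ over $\tilde c$ is $\hat u\cdot C + r_c$. Since $\tilde d\not\subseteq\tilde c$ gives $|DC|+r_d>r_c$, the point $C_\infty$ strictly maximizes $\hat u\cdot(\cdot)$ over $\tilde c\cup\tilde d$, hence over $\CH(c,d)$; so $C_\infty$ is an extreme point lying on $\partial\CH(c,d)$. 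As the only boundary piece of the hull contained in $Circ(d)$ is one of the two arcs, and that arc contains $C_\infty$, it must be $\overset{\frown}{C_1C_\infty C_2}$, which is claim (1).

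For part (2) I would argue in two cases. Take first a point $p$ on the \emph{open} arc $\overset{\frown}{C_1C_0C_2}\setminus\{C_1,C_2\}$. Then $p\in Circ(d)\subseteq\tilde d\subseteq\CH(c,d)$, but $p$ lies on none of the four boundary pieces: not on the tangent segments (they meet $Circ(d)$ only at $C_1,C_2$), not on the arc of $Circ(c)$, and not on $\overset{\frown}{C_1C_\infty C_2}$ (it lies on the complementary open arc). Hence $p$ is an interior point of $\CH(c,d)$, so some open ball $B$ about $p$ lies inside $\CH(c,d)$; since $p\in Circ(d)$, the ball $B$ contains points outside $\tilde d$, and these lie in $\CH(c,d)\setminus\tilde d=Q_d(c)$ and are within any prescribed distance of $p$. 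For the two endpoints $C_1,C_2$ I would instead slide along the tangent segments: every point of such a segment other than its touching point on $Circ(d)$ lies in $\CH(c,d)$ and strictly outside $\tilde d$ (the tangent line meets $\tilde d$ only at $C_i$), so these give points of $Q_d(c)$ arbitrarily close to $C_1$ and to $C_2$. Together these establish that $Q_d(c)$ has points arbitrarily close to every point of $\overset{\frown}{C_1C_0C_2}$.

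The step I expect to be the main obstacle is pinning down the boundary decomposition of $\CH(c,d)$ cleanly enough to justify both that $\overset{\frown}{C_1C_\infty C_2}$ is \emph{exactly} the boundary arc of $d$ and that the complementary open arc is interior, uniformly across the disjoint and overlapping cases. In the overlapping case the arc of $Circ(c)$ appearing on $\partial\CH(c,d)$ may shrink and part of the near arc $\overset{\frown}{C_1C_0C_2}$ dips inside $\tilde c$, so I would need to confirm that the external touching points on $d$ remain exactly $C_1,C_2$ and that no portion of the near arc migrates onto the hull boundary. The support-function computation for $C_\infty$ and the observation that each tangent line meets $\tilde d$ only at its touching point are routine once the configuration is fixed, so the care concentrates on the uniform treatment of these cases rather than on any single calculation.
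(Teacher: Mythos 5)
Your proposal is correct and takes essentially the same route as the paper: the paper's proof of part (1) simply asserts the four-piece boundary decomposition of $\CH(c,d)$ (two tangent segments with endpoints $C_1,C_2$, the arc $\overset{\frown}{C_1C_\infty C_2}$ of $d$, and an arc of $c$) with reference to its figure, and for part (2) notes that $\overset{\frown}{C_1C_0C_2}$ lies inside the closed hull so that removing $\tilde{d}$ leaves points arbitrarily close to it. Your additions — the support-function argument pinning $C_\infty$ to the boundary, the interior-point ball argument for the open near arc, and the tangent-segment treatment of the endpoints $C_1,C_2$ — are sound refinements of detail (the overlap worry you flag is resolved by noting a crossing point of the two circles would need a common supporting line tangent to both circles there, which crossing forbids), not a different method.
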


\begin{proof}
(1) $\CH(c,d)$ is a convex closed set (in regular topology of $\mathbb{R}^2$), whose border consists of two tangent segments with end points $C_1,C_2$ on $Circ(d)$, the arc  $\overset{\frown}{C_1C_\infty C_2}$ of $Circ(d)$ and arc of circle $c$, see Figure \ref{case2}.

(2) The arc $\overset{\frown}{C_1C_0C_2}$ is inside closed set $\CH(c,d)$. When $d$ is removed from $\CH(c,d)$, the remaining set $Q_d(c)$ has points arbitrary close to the points of this arc.
 
\end{proof}

Now we move to consider configurations of three circles. For area Q in these cases, $Y$ consists of two circles, for example, $Y=\{b,c\}$, and $Q_d=(\CH(b,d)\cap \CH(c,d))\setminus \tilde{d}$. 

We will assume that none of the three circles involved is included into another, and in the next several statements we will refer to configurations 1, 2 or 3 from Lemma \ref{3 circles}.

\begin{lemma}\label{c1-eq}
Let circles $x,y,z$ be in configuration 1 with center $z$. Then points $X_i$ follow $Y_i$ on circle $z$ such that $\arck{X_1 X_{\infty} X_2} \subseteq \arck{Y_1 Y_{0} Y_2}$ and $\arck{Y_1 Y_{\infty} Y_2} \subseteq \arck{X_1 X_{0} X_2}$. 
\end{lemma}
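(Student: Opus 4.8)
The plan is to recast both arc-inclusions as statements about \emph{supporting lines} of the binary hulls $\CH(x,z)$ and $\CH(y,z)$, and then to play the configuration-1 hypothesis $z\in\ch_c(x,y)$ (equivalently $\tilde z\subseteq\CH(x,y)$) against that reformulation. Throughout I assume the standing nondegeneracy of the section: none of $x,y,z$ is contained in another and $z$ is a genuine circle, so all four of $X_0,X_1,X_2,X_\infty$ (and likewise for $y$) are distinct on $Circ(z)$.

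First I would record the reformulation, which is just convexity together with Lemma \ref{Q-one}. For $P\in Circ(z)$ let $t_P$ be the tangent line to $z$ at $P$ and let $H_P$ be the closed half-plane bounded by $t_P$ that contains $z$. Since $P\in\partial\CH(x,z)$ exactly when $t_P$ supports $\CH(x,z)$, and since $\partial\CH(x,z)\cap Circ(z)$ is precisely the far arc $\arck{X_1X_\infty X_2}$ by Lemma \ref{Q-one}(1), I get that $P$ lies on the (closed) far arc $\arck{X_1X_\infty X_2}$ iff $\tilde x\subseteq H_P$. Dually, $P$ lies on the (closed) near arc $\arck{Y_1Y_0Y_2}$ iff $\tilde y\not\subseteq\mathrm{int}\,H_P$, i.e. $\tilde y$ reaches or crosses $t_P$. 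Checking the reference points confirms the orientation: $t_{X_\infty}$ has $x$ strictly on the $z$-side while $t_{X_0}$ does not, and $t_{Y_1}$ is a common tangent of $y,z$ so $Y_1$ is an endpoint of the closed near arc.

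The heart of the argument is then the first inclusion. Fix $P$ on the far arc $\arck{X_1X_\infty X_2}$, so $\tilde x\subseteq H_P$, and suppose toward a contradiction that $P\notin\arck{Y_1Y_0Y_2}$, i.e. $\tilde y\subseteq\mathrm{int}\,H_P$. Then $\CH(x,y)\subseteq H_P$, and since we are in configuration 1 with center $z$ we have $\tilde z\subseteq\CH(x,y)\subseteq H_P$; in particular $P\in Circ(z)\subseteq\CH(x,y)$ while $P\in t_P=\partial H_P$, so $t_P$ supports the convex body $\CH(x,y)$ at $P$. Now $P\notin\tilde y$ (as $\tilde y\subseteq\mathrm{int}\,H_P$), and in the generic case $P\notin\tilde x$; hence $P$ lies in the relative interior of one of the two straight tangent edges of $\partial\CH(x,y)$, which forces $t_P$ to coincide with that common external tangent line of $x$ and $y$. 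But in configuration 1 the center circle $z$ is strictly interior to $\CH(x,y)$, so the common external tangents of $x,y$ are not tangent to $z$ — contradicting that $t_P$ is tangent to $z$ at $P$. This yields $\arck{X_1X_\infty X_2}\subseteq\arck{Y_1Y_0Y_2}$, and exchanging the roles of $x$ and $y$ gives the second inclusion $\arck{Y_1Y_\infty Y_2}\subseteq\arck{X_1X_0X_2}$ verbatim.

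The hard part, I expect, is the boundary bookkeeping rather than any single computation: I must keep the arcs \emph{closed} so that the inclusions come out as the non-strict $\subseteq$ actually claimed (equality genuinely occurs, e.g. for three equal collinear circles, which sits exactly on the configuration-1/configuration-2 boundary), and I must dispose of the degenerate subcase $P\in\tilde x$, which can only happen when $x$ and $z$ are tangent at $P$. As a cross-check, and as a fallback should the synthetic argument become fussy, I would also set up the analytic version: place $z$ at the origin, let $\theta_x,\theta_y$ be the directions from $Z$ to $X,Y$, and put the tangent points at $\theta_x\pm\psi_x$ and $\theta_y\pm\psi_y$ with $\cos\psi_x=(r_z-r_x)/|ZX|$ and $\cos\psi_y=(r_z-r_y)/|ZY|$. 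The pair of inclusions then collapses to the single inequality $\psi_x+\psi_y\ge\pi+\lvert\angle XZY-\pi\rvert$, which I would derive directly from the requirement that $z$ clear both common tangent lines of $x$ and $y$, with equality precisely in the limit case.
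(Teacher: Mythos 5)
Your proposal is correct in substance, and it necessarily diverges from the paper, because the paper offers no written argument for this lemma at all: its entire proof is ``See the proof by picture on Fig.~\ref{Lc1}.'' What you supply is an actual synthetic proof. Encoding the closed far arc $\arck{X_1X_\infty X_2}$ as $\{P\in Circ(z):\tilde{x}\subseteq H_P\}$ and the closed near arc $\arck{Y_1Y_0Y_2}$ as $\{P:\tilde{y}\not\subseteq \operatorname{int}H_P\}$ is a faithful support-line translation of Lemma~\ref{Q-one}, and the configuration-1 hypothesis $\tilde{z}\subseteq\CH(x,y)$ enters exactly once, to make $t_P$ a supporting line of $\CH(x,y)$ at $P$ and hence (when $P\notin\tilde{x}\cup\tilde{y}$) the line of a common external tangent edge. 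Your version buys two things the figure cannot: the inclusions are proved for \emph{closed} arcs, so the equality case (three equal collinear circles, on the boundary between configurations 1 and 2) is genuinely covered rather than implicitly assumed away, and the argument is symmetric in $x,y$, so the second inclusion really is the first verbatim.

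Two repairs are needed, both one-liners. First, your assertion that ``in configuration 1 the center circle $z$ is strictly interior to $\CH(x,y)$'' is false in general: configuration 1 only requires $\tilde{z}\subseteq\CH(x,y)$, and in the collinear equal-radii case $z$ touches both common tangents. Do not route the contradiction through it; you do not need to. Once $t_P$ coincides with a common external tangent of $x$ and $y$, that line touches $\tilde{y}$ by definition, which directly contradicts your standing supposition $\tilde{y}\subseteq\operatorname{int}H_P$ — and in the limit case where $z$ does touch that tangent, the contradiction branch is simply never entered, consistent with your own observation that equality of arcs occurs there. Second, the deferred subcase $P\in\tilde{x}$ requires no analysis: there $t_P$ would be tangent to both $x$ and $z$ at the same point $P$ with $\tilde{x},\tilde{z}\subseteq H_P$, and two distinct circles tangent to one line at one point on the same side are nested, contradicting the standing hypothesis that none of $x,y,z$ contains another. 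With those two lines inserted the synthetic argument is complete; the analytic criterion $\psi_x+\psi_y\ge \pi+\lvert\angle XZY-\pi\rvert$ is a correct equivalent restatement (the interval-fitting computation checks out) but is not needed.
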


\begin{figure}[H]
\centering
\vspace{1.4mm}
\includegraphics[width=0.8\textwidth]{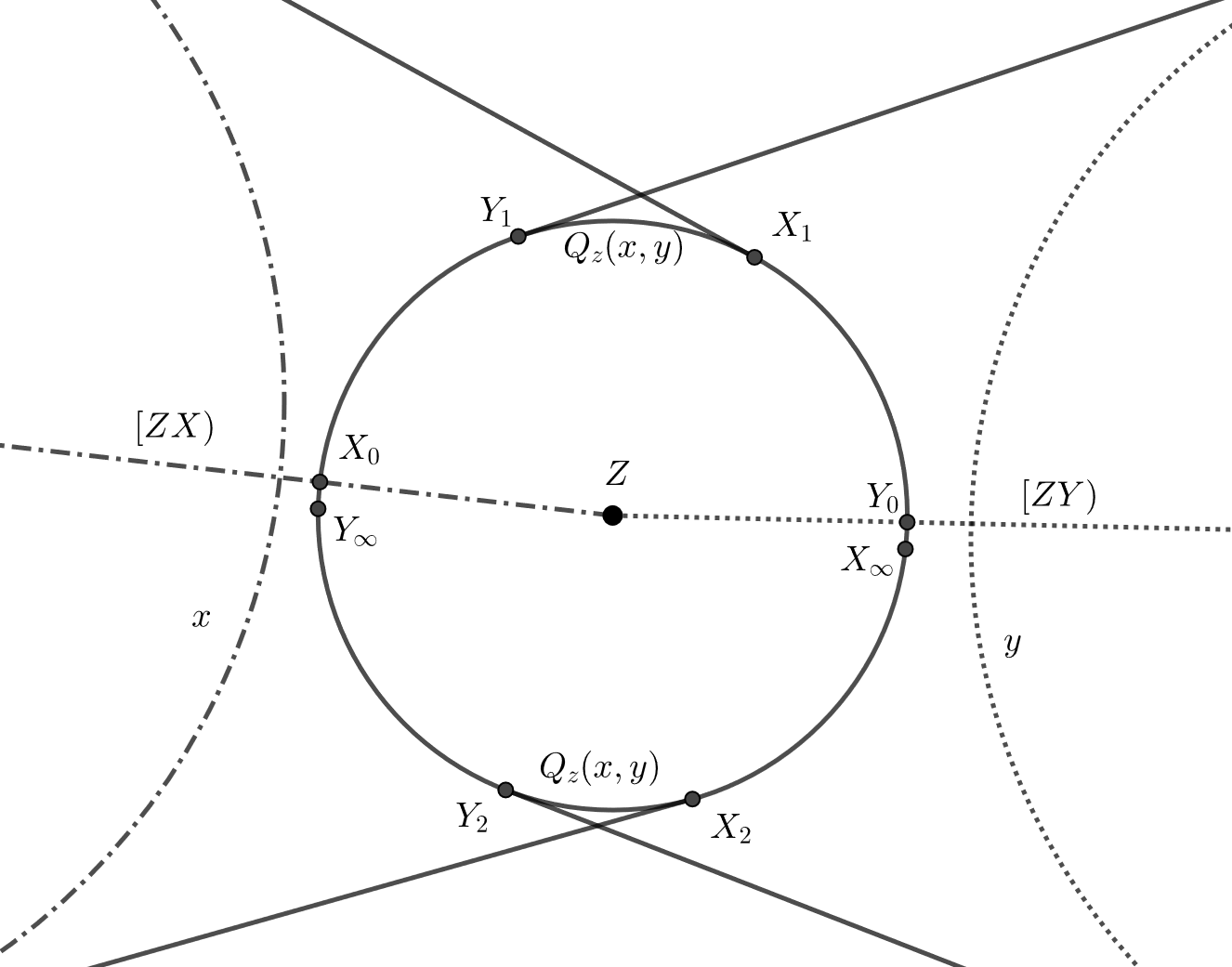}  
\caption{Lemma \ref{c1-eq}}
  %\label{L2}
  \label{Lc1}
\end{figure}

\begin{proof} 
See the proof by picture on Fig. \ref{Lc1}.

\end{proof}

\begin{lemma}\label{c2-eq}
Suppose that circles $x,y,z$ are in configuration 2 with center $z$. Then:
\begin{enumerate}[label=\emph{\alph*)}]
    \item Points $X_i$ follow $Y_i$ such that
    
    $\arck{Y_1 Y_{0} Y_2} ~\subset ~\arck{X_1 X_{\infty} X_2}$ and  $\arck{X_1 X_{0} X_2} ~\subset~ \arck{Y_1 Y_{\infty} Y_2}$. 
    \item On external circle $x$,  $\arck{Y_{1}, Y_{0} Y_{2}} \subset \arck{Z_{1} Z_{0} Z_{2}}$.
    \item The border arcs of $\CH(x,y,z)$ on $z$ are formed by the intersection of arcs $\overset{\frown}{X_1 X_{\infty} X_2}$ and  $\overset{\frown}{Y_1 Y_{\infty} Y_2}$.
    \item $Q_z(x,y) = \emptyset$.
\end{enumerate}
Furthermore:
\begin{enumerate}[label=\emph{\alph*)}]
\setcounter{enumi}{3}
    \item If the circles are not in limit case 2, then $X_i \neq Y_i$%, $Q_z(x,y) = \emptyset$,
    and there are two border arcs of $\CH(x,y,z)$ on $z$.
    \item If the circles are in limit case 2, then %$Q_z(x,y) = \emptyset$ and 
    $X_1 = Y_1$ or $X_2 = Y_2$, and there is one nontrivial (i.e. consisting of more than one point)
    border arc of $\CH(x,y,z)$ on $z$.
\end{enumerate}
\end{lemma}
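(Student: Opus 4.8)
The plan is to read every assertion as a statement about how the two binary hulls $\CH(x,z)$ and $\CH(y,z)$ behave radially as seen from the centre $Z$ of the middle circle, and then to feed in the defining equality $\CH(x,y,z)=\CH(x,z)\cup\CH(y,z)$ of configuration 2. By Lemma \ref{Q-one}, the boundary of $\CH(x,z)$ meets $Circ(z)$ exactly along the outer arc $\arck{X_1X_\infty X_2}$, while the inner arc $\arck{X_1X_0X_2}$ lies in the interior of $\CH(x,z)$ and is precisely the arc along which $Q_z(x)=\CH(x,z)\setminus\tilde z$ is glued to $z$ (and symmetrically for $y$). Writing $H_x(\psi)$ for the distance from $Z$ to the far boundary point of $\CH(x,z)$ along the ray from $Z$ in direction $\psi$, this says $H_x(\psi)=r_z\iff\psi\in\arck{X_1X_\infty X_2}$ and $H_x(\psi)>r_z\iff\psi\in\arck{X_1X_0X_2}$, where $r_z$ is the radius of $z$. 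Part (c) is then almost immediate: a point $Q_\psi\in Circ(z)$ lies on the boundary of the union iff the union does not extend past $z$ in direction $\psi$, i.e. iff $\max(H_x(\psi),H_y(\psi))=r_z$, i.e. iff $\psi$ lies in both outer arcs, which is exactly $\arck{X_1X_\infty X_2}\cap\arck{Y_1Y_\infty Y_2}$.

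The heart of the proof, and the step I expect to be the main obstacle, is (a): that the inner arcs $\arck{X_1X_0X_2}$ and $\arck{Y_1Y_0Y_2}$ have disjoint interiors. I would argue by contradiction using convexity of $\CH(x,y,z)$. If the interiors overlapped, then on the overlap both $H_x,H_y>r_z$. In the interleaved case the endpoints force $H_x>H_y$ where one inner arc is exited and $H_y>H_x$ where the other is exited, so $H_x-H_y$ changes sign; in the nested case, the hypothesis of configuration 2 (as opposed to the absorbed configuration 1, which is excluded since none of the three circles is contained in another and $y\not\subseteq\CH(x,z)$) forces $y$ to reach the boundary, again producing a sign change. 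Hence there is an interior direction $\psi_c$ with $H_x(\psi_c)=H_y(\psi_c)>r_z$ at which the boundary passes from $\partial\CH(x,z)$ to $\partial\CH(y,z)$, i.e. a transversal crossing of two convex curves occurring strictly outside $\tilde z$. At such a crossing the union $\CH(x,z)\cup\CH(y,z)$ has a reflex corner, contradicting convexity of $\CH(x,y,z)$. Disjointness of the interiors immediately gives both containments $\arck{Y_1Y_0Y_2}\subseteq\arck{X_1X_\infty X_2}$ and $\arck{X_1X_0X_2}\subseteq\arck{Y_1Y_\infty Y_2}$, proper in the non-limit case.

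Parts (d), (e), (f) then follow. For (d): if some $P\in Q_z(x,y)$ existed, convexity of each binary hull would place $P'=[ZP)\cap Circ(z)$ in both closed inner arcs (the ray leaves $z$ through the inner arc in each hull, since through the outer arc the hull already stops at $z$); by (a) these closed arcs meet in at most one point, and a purely radial exit through that single junction point lands on a tangent segment rather than in the hull's interior, so no such $P$ exists. For the arc count, $\arck{X_1X_\infty X_2}\cap\arck{Y_1Y_\infty Y_2}$ is the complement of $\arck{X_1X_0X_2}\cup\arck{Y_1Y_0Y_2}$: when the inner arcs are strictly disjoint this complement is two arcs and all four tangent points differ, giving $X_i\neq Y_i$ and two border arcs (e); in limit case 2 the two inner arcs share exactly the tangent point cut by the common tangent to all three circles, so $X_1=Y_1$ or $X_2=Y_2$, their union is one arc, and the complement is a single nontrivial arc (f).

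Finally, (b) is a separate ``view from $x$'' argument. In configuration 2 the hull boundary runs arc$(x)$, tangent$(x,z)$, arc$(z)$, tangent$(z,y)$, arc$(y)$, tangent$(y,z)$, arc$(z)$, tangent$(z,x)$, so the only tangents touching $Circ(x)$ are the two $x$-$z$ tangents; hence the boundary of the whole hull on $Circ(x)$ coincides with that of $\CH(x,z)$, namely $\arck{Z_1Z_\infty Z_2}$. Since $\CH(x,y)\subseteq\CH(x,y,z)$ and $Q_x(y)$ is glued to $x$ along $\arck{Y_1Y_0Y_2}$, every direction in $\arck{Y_1Y_0Y_2}$ is one in which the hull extends beyond $x$, hence lies off $\arck{Z_1Z_\infty Z_2}$ and so inside $\arck{Z_1Z_0Z_2}$, giving $\arck{Y_1Y_0Y_2}\subseteq\arck{Z_1Z_0Z_2}$, proper because $\CH(x,y)\subsetneq\CH(x,y,z)$. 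The delicate point here, which I would isolate and justify from $z$ being the centre, is that no $x$-$y$ tangent touches $Circ(x)$; together with the transversal-crossing step in (a), these two are where the configuration-2 hypothesis does the real work.
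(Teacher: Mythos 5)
Your proposal cannot be checked against a written argument in the paper, because the paper offers none: like Lemma~\ref{c1-eq}, Lemma~\ref{c2-eq} is justified only by a picture (Figure~\ref{L2}). Your radial argument is therefore a genuinely different --- and more rigorous --- route, and its skeleton is sound. The dictionary $H_x(\psi)=r_z$ iff $\psi\in\arck{X_1X_\infty X_2}$ and $H_x(\psi)>r_z$ iff $\psi\in\arck{X_1X_0X_2}$ is exactly Lemma~\ref{Q-one} restated, and once (a) is in hand your deductions of (c), (d), (e), (f) are correct: the border on $z$ is where $\max(H_x,H_y)=r_z$; a point of $Q_z(x,y)$ would force a direction interior to both inner arcs; and the complement of the union of the two inner arcs is two arcs or one according as the closed inner arcs are disjoint or share an endpoint, with a shared endpoint $X_i=Y_i$ equivalent to a common tangent line of all three circles, i.e.\ limit case 2. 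What your approach buys over the paper's figure is that everything is derived from the single hypothesis that $\CH(x,z)\cup\CH(y,z)$ is convex, which is the actual content of ``configuration 2 with center $z$.''

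The one step that does not yet hold as written is the crux of (a): from a sign change of $H_x-H_y$ you conclude a \emph{transversal} crossing of the two hull boundaries and hence a reflex corner. In general two convex boundaries can cross while sharing a tangent line (take graphs of $\psi^2$ and $\psi^2+\psi^3$ near $0$), and at such a contact the union can remain locally convex, so the reflex-corner contradiction evaporates. You must close this by using what the boundaries actually are outside $\tilde z$: arcs of $x$, arcs of $y$, and segments of common tangents to $z$. A circle and its tangent line do not cross, two distinct circles tangent at a point do not cross, and two segments sharing a tangent line are collinear; so a non-transversal crossing forces the $x$-$z$ tangent and the $y$-$z$ tangent to lie on one line tangent to all three circles --- precisely the limit-case degeneracy, where the inner arcs share only an endpoint and in particular their \emph{interiors} do not overlap, so this case is vacuous under your contradiction hypothesis once you check that after the collinear segments separate, each boundary turns onto its own circle's arc on the hull side of the common line, producing no sign change. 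A second, smaller caution: in (b) you read off the boundary cycle ``arc($x$), tangent($x,z$), arc($z$), tangent($z,y$), arc($y$), tangent($y,z$), arc($z$), tangent($z,x$)'' to argue that no $x$-$y$ tangent touches $Circ(x)$, but that cycle is essentially equivalent to parts (a) and (c); to avoid circularity, derive (b) after (a)/(c), as you in fact order things, and say so explicitly --- the claim you need is only that the hull's border on $Circ(x)$ is $\arck{Z_1 Z_\infty Z_2}$, which follows from Lemma~\ref{Q-one} on $x$ together with the already-proved fact that $\CH(y,z)$ does not extend beyond $x$ in those directions.
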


\begin{figure}[H]
\centering
\vspace{1.4mm}
\includegraphics[width=0.4\textwidth]{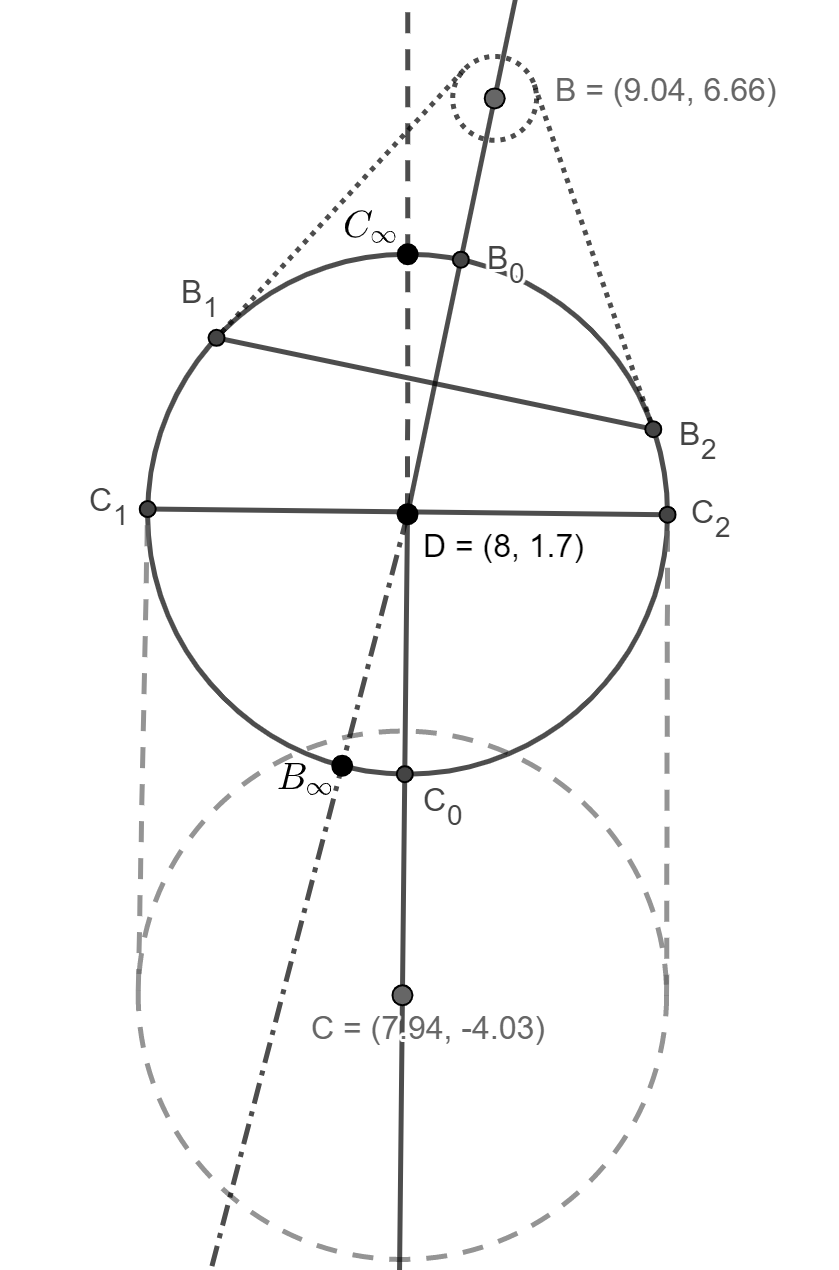}  
\caption{Lemma \ref{c2-eq}}
  \label{L2}
\end{figure}

\begin{lemma}\label{c3-eq}
Suppose that circles $x,y,z$ are in configuration 3. Then:
\begin{enumerate}[label=\emph{\alph*)}]
    \item Points $X_i$ and $Y_i$ alternate on $z$.
    \item The arc $\arck{X_1 X_0 X_2}\cap\arck{Y_1 Y_0 Y_2}$ is arbitrarily close to $Q_z(x,y)$ and $\arck{X_1 X_{\infty} X_2}\cap\arck{Y_1 Y_{\infty} Y_2}$ is part of the border of $\CH(x,y,z)$. Neither of these arcs are empty or trivial. 
\end{enumerate}
\end{lemma}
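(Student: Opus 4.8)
The plan is to establish both claims from the basic geometry of configuration 3, building directly on Lemma \ref{3 circles} and the tangent-point machinery developed earlier in this section. Recall that in configuration 3 we have $\CH(x,y,z)\neq\CH(x,y)\cup\CH(x,z)$ for \emph{every} labeling; geometrically this is the case where the three circles are in ``general position'' so that the triangle of centers genuinely contributes to the hull and each pair of binary hulls overlaps near $z$. Fix $z$ as the reference circle and consider the four tangent points $X_1,X_2$ (from $x$) and $Y_1,Y_2$ (from $y$) on $Circ(z)$, each symmetric about the corresponding center ray, as set up in the paragraphs preceding Lemma \ref{Q-one}.

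For part (a), the goal is to show the cyclic order on $Circ(z)$ is $X_1,Y_1,X_2,Y_2$ (up to relabeling), i.e. the two pairs alternate rather than nest. First I would argue by contradiction: if the $Y_i$ did \emph{not} separate the $X_i$ — say both $Y_1,Y_2$ lay on the arc $\arck{X_1X_0X_2}$ or both on $\arck{X_1X_\infty X_2}$ — then the configuration would reduce to one of the nesting patterns already classified as configuration 1 or configuration 2 by Lemmas \ref{c1-eq} and \ref{c2-eq}. Indeed, the nesting of arcs in those lemmas ($\arck{X_1X_\infty X_2}\subseteq\arck{Y_1Y_0Y_2}$ and its variants) is precisely the combinatorial signature of the center/external structure that fails to occur in configuration 3. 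So the contrapositive of the arc-inclusion statements in Lemmas \ref{c1-eq} and \ref{c2-eq} forces alternation. I would make this rigorous by noting that non-alternation of the tangent points would make one binary hull's boundary arc on $z$ contain the other's, which by the border-arc descriptions in those lemmas yields $\CH(x,y,z)=\CH(x,y)\cup\CH(y,z)$ or $=\CH(x,z)$, contradicting that we are in configuration 3.

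For part (b), I would use Lemma \ref{Q-one} as the local engine. By part (1) of that lemma, the arc $\arck{X_1X_\infty X_2}$ lies on the border of $\CH(x,z)$ and $\arck{Y_1Y_\infty Y_2}$ on the border of $\CH(y,z)$, while by part (2) the set $Q_z(x)$ (resp.\ $Q_z(y)$) has points arbitrarily close to $\arck{X_1X_0X_2}$ (resp.\ $\arck{Y_1Y_0Y_2}$). Since $\CH(x,y,z)\supseteq\CH(x,z)\cup\CH(y,z)$ and the outer boundary of the triple hull near $z$ is cut out by whichever binary hull extends farther, the border arc of $\CH(x,y,z)$ on $z$ is exactly $\arck{X_1X_\infty X_2}\cap\arck{Y_1Y_\infty Y_2}$. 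Dually, a point just outside $z$ lies in $Q_z(x,y)=(\CH(x,z)\cap\CH(y,z))\setminus\tilde z$ iff it is approached by both binary $Q$-sets, which happens precisely over the intersection arc $\arck{X_1X_0X_2}\cap\arck{Y_1Y_0Y_2}$; the ``arbitrarily close'' language is inherited verbatim from Lemma \ref{Q-one}(2). The non-emptiness and non-triviality of both intersection arcs is then a direct consequence of the alternation in part (a): because $X_1,Y_1,X_2,Y_2$ interleave, each of the two arc intersections spans a genuine (positive-length) sub-arc rather than collapsing to a point or vanishing.

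The main obstacle I anticipate is pinning down part (a) cleanly, since ``alternate'' must be proved rather than read off a picture, and the honest argument has to exclude the degenerate boundary between configurations (the limit case 2 of Lemma \ref{c2-eq}, where $X_1=Y_1$ or $X_2=Y_2$) without accidentally excluding genuine configuration-3 instances. I would handle this by treating the tangent points as continuous functions of the radii/positions and invoking the trichotomy of Lemma \ref{3 circles}: alternation, nesting, and the limit (coincidence) case exhaust the possibilities, and configuration 3 is exactly the open alternation regime, so $X_i\neq Y_i$ strictly and the interleaving is strict. Once alternation is secured, part (b) is essentially bookkeeping on top of Lemma \ref{Q-one}, so I expect the proof to be short — likely the authors, as elsewhere in this section, defer much of it to an accompanying figure.
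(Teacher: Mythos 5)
You guessed right that the paper contains no written proof of Lemma \ref{c3-eq}: like Lemma \ref{c2-eq}, it is justified only by an accompanying figure (Figure \ref{L4}), so your proposal is being measured against the intended argument rather than a written one. Your part (b) is essentially sound and is easily made rigorous: at any point $p$ of $\arck{X_1 X_\infty X_2}\cap\arck{Y_1 Y_\infty Y_2}$ the tangent line to $z$ at $p$ supports both $\CH(x,z)$ and $\CH(y,z)$, hence supports $\CH(x,y,z)$, which is the convex hull of their union, so the intersection arc lies on the border of the triple hull; points of the open arc $\arck{X_1 X_0 X_2}\cap\arck{Y_1 Y_0 Y_2}$ are interior to both binary hulls, so nearby points outside $\tilde z$ lie in $Q_z(x,y)$; and once part (a) is known, interleaving of the endpoint pairs means each of the two arcs contains exactly one endpoint of the other, so both intersections are single nondegenerate arcs. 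That is exactly the bookkeeping on top of Lemma \ref{Q-one} that you describe.

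The genuine gap is in part (a). Lemmas \ref{c1-eq} and \ref{c2-eq} assert that configurations 1 and 2 produce nesting; what you need is the \emph{converse} --- that every non-alternating pattern forces configuration 1 or 2 --- and calling this ``the contrapositive of the arc-inclusion statements'' is a logical slip: the contrapositive of ``configuration 1 $\Rightarrow$ nesting'' only says that non-nesting excludes configuration 1, and says nothing about what non-alternation implies. Your attempted patch fails on two counts. First, non-alternation does not make ``one binary hull's boundary arc contain the other's'': non-interleaved pairs $\{X_1,X_2\}$, $\{Y_1,Y_2\}$ also allow disjoint outer arcs (the configuration-1-center pattern of Lemma \ref{c1-eq}) and the pattern $\arck{X_1 X_0 X_2}\subset\arck{Y_1 Y_\infty Y_2}$, $\arck{Y_1 Y_0 Y_2}\subset\arck{X_1 X_\infty X_2}$, whose outer-arc intersection has \emph{two} components --- precisely the configuration-2-center pattern of Lemma \ref{c2-eq}(c),(d) --- so these cases cannot be collapsed into a containment. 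Second, even granting a containment, the step from an arc pattern to $\CH(x,y,z)=\CH(x,y)\cup\CH(y,z)$ cannot be obtained by citing ``the border-arc descriptions in those lemmas,'' since those descriptions are conclusions drawn \emph{under the hypothesis} of configuration 1 or 2, the very thing you are trying to derive; one needs a direct argument in the style of Lemmas \ref{Lemma42} and \ref{W1} (confining each of $x$, $y$ to one of the two tangent regions determined by the arcs and splitting segments through $z$). Your fallback continuity argument has the same circularity: it presupposes that ``configuration 3 is exactly the open alternation regime,'' which is the statement being proved; to make it honest you would have to show separately that a coincidence $X_i=Y_j$ forces a common tangent line to all three circles, hence configuration 1 or limit case 2, and that the configuration-3 locus is connected. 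So part (a) requires a case analysis that your proposal gestures at but does not supply; everything downstream of it is fine.
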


\begin{figure}[H]
\centering
\vspace{1.5mm}
\includegraphics[width=0.7\textwidth]{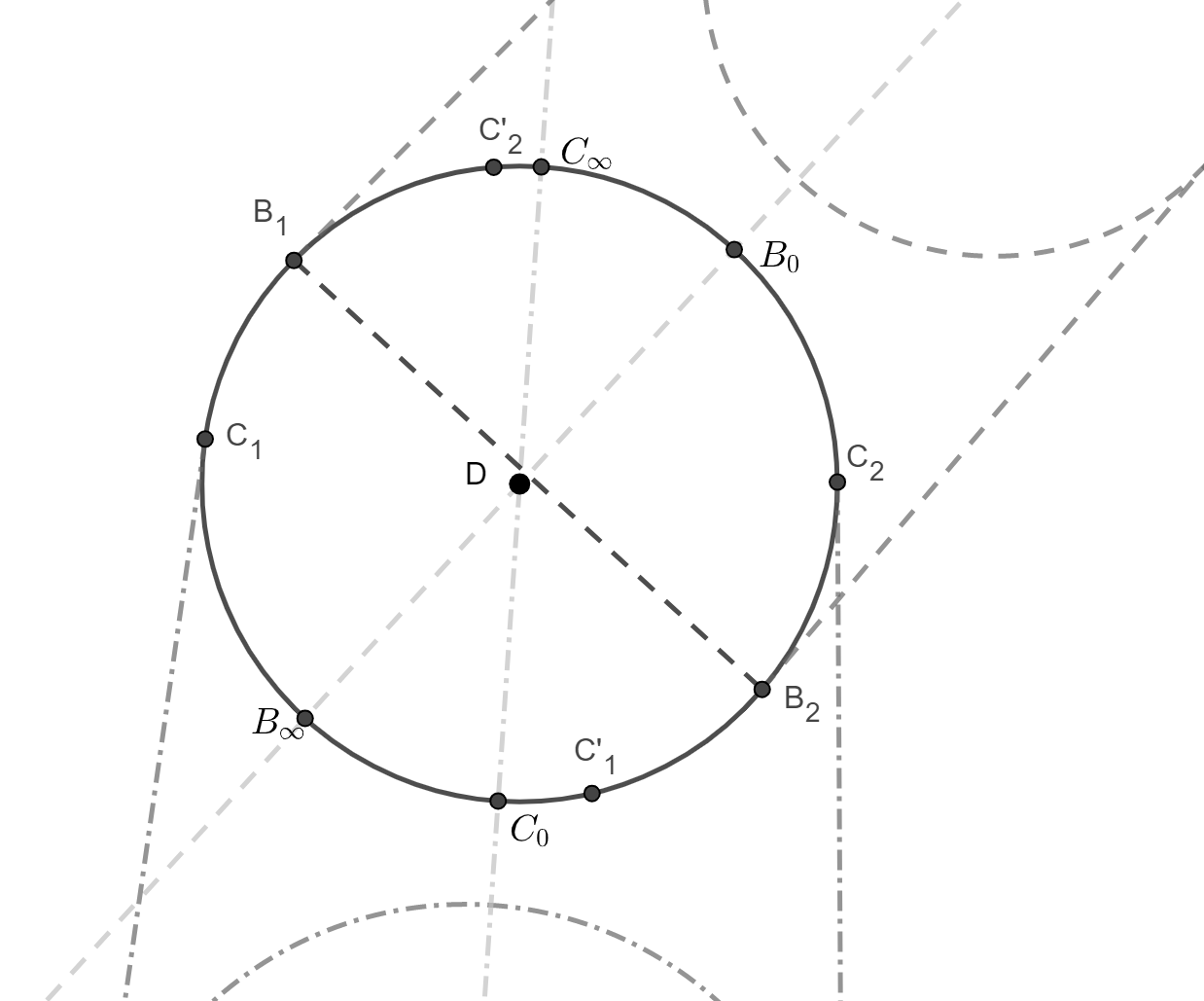}  
\caption{%Loci for $B_1,B_2$ in 
Lemma \ref{c3-eq} }
  \label{L4}
\end{figure}

\begin{lemma}\label{case4}
Suppose circles $a,d,c$ are in limit case 2 configuration with $d$ in the center, and let the points on $d$ be in counter-clockwise order 
\[
A_1,A_0,A_2=C_2,C_0,C_1
\]
If the radius of $d$ is decreased, but still $d \not \in ch_c(a,c)$, then $A_2\not = C_2$ and the points are in the order
\[  
A_1, C_2, A_2, C_1
\]
\end{lemma}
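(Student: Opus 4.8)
The plan is to track the four tangency points $A_1,A_2,C_1,C_2$ by their angular positions on $Circ(d)$ as $r_d$ shrinks, and to reduce the claimed cyclic order to two inequalities relating $\gamma:=\angle ADC$ to the angular half-widths $\alpha_a,\alpha_c$ of the near-arcs $\arck{A_1A_0A_2}$ and $\arck{C_1C_0C_2}$. First I would record the elementary tangent computation underlying the Area $Q$ preamble: for two circles with neither inside the other, the external common tangents meet $Circ(d)$ symmetrically about $[DA)$ at angular distance $\alpha_a$ from $A_0$, where $\cos\alpha_a=(r_d-r_a)/|DA|$, and likewise $\cos\alpha_c=(r_d-r_c)/|DC|$. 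Since $A,C,r_a,r_c$ and the fixed directions $[DA),[DC)$ are unchanged while $r_d$ decreases, both cosines strictly decrease, so $\alpha_a$ and $\alpha_c$ strictly increase. This monotonicity is the engine of the whole argument.

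Next I would translate the hypotheses into these coordinates. Writing the near-arc endpoints as $\phi_A\pm\alpha_a$ and $\phi_C\pm\alpha_c$ about the fixed directions $\phi_A,\phi_C$, the given order $A_1,A_0,A_2=C_2,C_0,C_1$ says the two near-arcs are tangent, sharing only the endpoint $A_2=C_2$ between the center-directions; in angular terms this is exactly the limit-case-$2$ identity $\alpha_a+\alpha_c=\gamma$ (the content of Lemma \ref{c2-eq}(e) for $d$ in the center). Decreasing $r_d$ then forces $\alpha_a+\alpha_c>\gamma$ strictly, so the shared endpoint splits and $A_2\neq C_2$ immediately; moreover the two near-arcs now genuinely overlap, so the triple is no longer in configuration $2$ (nor its limit case).

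I would then locate the configuration precisely. By Lemma \ref{3 circles} the triple lies in configuration $1$, $2$, or $3$; I have excluded $2$, and configuration $1$ requires one circle to be redundant. Shrinking $r_d$ only shrinks $\CH(a,d)$ and $\CH(c,d)$, so the non-redundancy of $a$ and $c$ present at limit case $2$ persists, while the hypothesis $d\notin\ch_c(a,c)$ rules out $d$ being redundant; hence we sit in configuration $3$. Now Lemma \ref{c3-eq}(a) applies: the points $A_i$ and $C_i$ alternate on $Circ(d)$, which is precisely the statement that neither near-arc contains the other, i.e. $|\alpha_a-\alpha_c|<\gamma$. Combined with $\alpha_a+\alpha_c>\gamma$, the split endpoints must cross: by continuity from the limit case, where $A_2=C_2$ sat between $A_0$ and $C_0$, the point $C_2$ slides into the near-arc $\arck{A_1A_0A_2}$ of $a$ and $A_2$ slides into the near-arc of $c$, producing exactly the cyclic order $A_1,C_2,A_2,C_1$.

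I expect the main obstacle to be the inequality $|\alpha_a-\alpha_c|<\gamma$, i.e. that the overlapping near-arcs do not have one swallowing the other across the entire range $d\notin\ch_c(a,c)$: a local ``decrease a little'' argument does not obviously preserve it, and a direct computation (with $\alpha_a,\alpha_c$ possibly growing at very different rates for asymmetric $|DA|,|DC|,r_a,r_c$) is delicate. The cleanest route, which I would take, is to obtain this inequality for free from the configuration-$3$ alternation of Lemma \ref{c3-eq}(a); the price is that I must first certify configuration $3$ throughout the allowed range, which is exactly where the monotonicity of $\alpha_a,\alpha_c$ and the hypothesis $d\notin\ch_c(a,c)$ do the real work.
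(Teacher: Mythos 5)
Your overall architecture is the same as the paper's (very terse) proof: reduce the radius, certify that the triple is in configuration 3, invoke the alternation of Lemma \ref{c3-eq}(a), and select the stated cyclic order by continuity from the limit position. Your angular bookkeeping, with $\cos\alpha_a=(r_d-r_a)/|DA|$ monotone in $r_d$, is correct and does give $A_2\neq C_2$ immediately from $\alpha_a+\alpha_c>\gamma$. But there is a genuine gap at exactly the point you yourself flag as the crux. The step ``the two near-arcs now genuinely overlap, so the triple is no longer in configuration 2'' only excludes configuration 2 \emph{with $d$ in the center}: by Lemma \ref{c2-eq}(a) that case has disjoint near-arcs on $d$, so overlap kills it. Configuration 2 with $a$ (or $c$) in the center, however, corresponds by Lemma \ref{c2-eq}(b), read on the external circle $d$, to nested near-arcs $\arck{C_1C_0C_2}\subset\arck{A_1A_0A_2}$ --- and nested arcs certainly overlap. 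Nesting is precisely the swallowing case $|\alpha_a-\alpha_c|\ge\gamma$ that you identify as the main obstacle, so your plan --- obtain $|\alpha_a-\alpha_c|<\gamma$ from configuration-3 alternation, and certify configuration 3 by excluding configuration 2 via overlap --- is circular in exactly this case. (Your exclusion of configuration 1, via persistence of non-redundancy of $a,c$ under shrinking $d$ together with the hypothesis $d\notin\ch_c(a,c)$, is fine.)

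The missing ingredient is a stronger use of $d\notin\ch_c(a,c)$. The shrunken circle keeps its center $D$, whose distance to the common tangent $\ell_1$ of $a,c$ that $d$ originally touched equals the original radius; hence after any strict decrease $d$ lies strictly inside on the $\ell_1$ side, and $d\not\subseteq\CH(a,c)$ forces $d$ to protrude across the \emph{other} common tangent $\ell_2$, and it does so between the touching points of $a$ and $c$ (the crossing points of the shrunken circle with $\ell_2$ lie between those of the original one). A circle protruding there cannot have $a$ as the center of configuration 2: points of $\CH(c,d)$ just beyond $\ell_2$, close to the touching point of $c$, lie in neither $\CH(a,d)$ nor $\CH(a,c)$, so $\CH(a,c,d)\neq\CH(a,d)\cup\CH(a,c)$; symmetrically for $c$ as center. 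With configurations 1 and 2 thus excluded, Lemma \ref{3 circles} leaves configuration 3, and the rest of your argument closes. For comparison, the paper's own proof is a single sentence asserting that the reduced triple is in configuration 3 and that therefore the points alternate; you are attempting to justify a step the paper leaves implicit, which is the right instinct, but as written your justification does not close it.
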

\begin{proof}
Indeed, when the radius is reduced, the circles become in configuration 3, therefore, the points $A_i,C_i$ should alternate.
\end{proof}

Note that, by lemma \ref{3 circles}, any three circles on the plane with none included into another must be in one of the three configurations addressed by Lemmas \ref{c1-eq} - \ref{c3-eq}. Therefore, for such circles, one of those Lemmas must apply, and observations of the conclusions of Lemmas \ref{c1-eq} - \ref{c3-eq} can be used to determine the possible configuration(s) of such circles.

\subsection{Configurations of circles in a tight implication}
%\ka{I am ok with that}

The goal of this section is to show that some circle configurations are restricted under tight implication.

We first make observations about configuration of circles $a,b,c$ and $a,c,d$ given the tight implication $abc\to d$.

\begin{lemma}\label{abc}
If $abc\to d$ is tight, then $a,b,c$ are in configuration 3.
\end{lemma}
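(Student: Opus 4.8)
The plan is to prove the contrapositive by eliminating configurations 1 and 2 for the triple $a,b,c$ under the assumption that $abc\to d$ is tight. Recall that tightness of $abc\to d$ means $d\in\ch_c(a,b,c)$ but $d\notin\ch_c(a,b)$, $d\notin\ch_c(a,c)$, and $d\notin\ch_c(b,c)$; equivalently $\tilde d\subseteq\CH(a,b,c)$ but $\tilde d$ is not contained in any of the three binary hulls $\CH(a,b)$, $\CH(a,c)$, $\CH(b,c)$. By Lemma \ref{3 circles}, since no circle is contained in another (which I will justify as a consequence of tightness, since containment $x\subseteq y$ would collapse a binary hull into a single circle and typically force a non-tight implication), the triple $a,b,c$ falls into exactly one of configurations 1, 2, or 3. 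So it suffices to derive a contradiction in configurations 1 and 2.

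First I would dispose of configuration 1. Here, for some labeling $x,y,z$ of $a,b,c$, we have $\CH(x,y,z)=\CH(y,z)$, so the center circle $x$ is already swallowed: $\CH(a,b,c)=\CH(y,z)$ for the two external circles $y,z$. Then $\tilde d\subseteq\CH(a,b,c)=\CH(y,z)$ means $d\in\ch_c(y,z)$, a binary implication among $a,b,c$, directly contradicting tightness of $abc\to d$.

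Next, and this is where the real content lies, I would handle configuration 2. Here $\CH(x,y,z)=\CH(x,y)\cup\CH(y,z)$ for the center circle $y$ and externals $x,z$. The key geometric fact I would invoke is part (d) of Lemma \ref{c2-eq}, namely $Q_z(x,y)=\emptyset$, together with its symmetric analogue; more directly, I want to show that $\tilde d\subseteq\CH(x,y)\cup\CH(y,z)$ forces $\tilde d$ to lie entirely inside one of the two binary hulls. Since $\tilde d$ is a closed connected (indeed convex-looking) disk, and the union $\CH(x,y)\cup\CH(y,z)$ decomposes across the center circle $y$, I would argue that the two ``outer'' regions $\CH(x,y)\setminus\tilde y$ and $\CH(y,z)\setminus\tilde y$ are separated in a way that a single disk $\tilde d$ contained in the union must either sit inside one lobe, or be contained in $\CH(x,y)$ (or $\CH(y,z)$) outright because these hulls already contain $\tilde y$. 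In either case we obtain $d\in\ch_c(x,y)$ or $d\in\ch_c(y,z)$, again contradicting tightness.

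The main obstacle I anticipate is making the configuration-2 argument fully rigorous, specifically ruling out the possibility that $\tilde d$ straddles the center circle $y$ while still being covered by $\CH(x,y)\cup\CH(y,z)$. The delicate point is that the union of the two binary hulls is generally \emph{not convex} (that is precisely what distinguishes configuration 2 from configuration 3), so one cannot immediately apply convexity of $\tilde d$; I would need to use the structure of the tangent lines at the touching points on $Circ(y)$ — as catalogued by Lemma \ref{c2-eq}(c) and the arc comparisons in parts (a),(b) — to show that the boundary of the union is ``convex toward the outside'' along $\CH(x,y)$ and along $\CH(y,z)$ separately, so that any disk in the union lies in one piece. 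The borderline situation is the limit case 2 of part (e)/(f), where the two border arcs merge into one; I would check that even there the conclusion holds, since a single disk meeting both lobes would have to cross the shared tangent structure at $y$ and hence poke outside the union. Once configurations 1 and 2 are both excluded, configuration 3 is the only survivor, completing the proof.
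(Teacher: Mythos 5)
Your proposal takes essentially the same route as the paper: Lemma \ref{3 circles} reduces to ruling out configurations 1 and 2, configuration 1 dies because the triple hull collapses to a binary hull, and configuration 2 dies because $\tilde{d}\subseteq \CH(x,y)\cup\CH(y,z)$ forces $\tilde{d}$ into a single binary hull, contradicting tightness either way. The configuration-2 step you flag as delicate is in fact simply asserted in the paper's proof, and your instinct for how to close it is sound: by Lemma \ref{c2-eq}(d) the two binary hulls meet exactly in $\tilde{y}$, so connectedness of $\tilde{d}$ (and of the crescent $\tilde{d}\setminus\tilde{y}$ when $d$ overlaps $y$) already pins $\tilde{d}$ to one lobe, with the limit case needing no separate treatment.
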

\begin{proof}
Indeed, if $a,b,c$ is in configuration 1, then, say $c\in ch_c(a,b)$, therefore, $ab\to d$ holds, which contradicts the tightness of implication $abc\to d$. If $a,b,c$ are in configuration 2, say, with $b$ at the center, then $d\in ch_c(a,b,c)$ implies either $d\in ch_c(a,b)$ or $d\in ch_c(b,c)$, again a contradiction with the tightness.
\end{proof}

\begin{lemma}\label{NC1}
If $abc\to d$ is tight, then $a,c,d$ cannot be in configuration 1.
\end{lemma}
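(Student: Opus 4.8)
The plan is to derive a contradiction from assuming that $a,c,d$ are in configuration 1 while $abc \to d$ is tight. The key structural fact is that the statement $abc \to d$ means $d \in \ch_c(a,b,c)$, i.e. $\tilde d \subseteq \CH(a,b,c)$, and tightness forbids $d \in \ch_c(a,b)$, $d \in \ch_c(a,c)$, and $d \in \ch_c(b,c)$. First I would invoke Lemma \ref{abc}, already established, to record that $a,b,c$ are necessarily in configuration 3. This is the ambient configuration I get to assume for free, and it constrains how the binary hulls $\CH(a,b)$, $\CH(a,c)$, $\CH(b,c)$ fit together. The target is to show that configuration 1 for the triple $a,c,d$ is incompatible with this.

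Next I would unpack what configuration 1 for $a,c,d$ says. By Lemma \ref{3 circles}(1), one of the three circles is redundant: $\CH$ of the triple equals $\CH$ of the other two, which under the convex-hull-for-circles operator means one of $a,c,d$ lies in $\ch_c$ of the remaining two. I would split into the cases according to which circle is central/redundant. The cases $d \in \ch_c(a,c)$ and $a \in \ch_c(c,d)$ or $c \in \ch_c(a,d)$ must each be handled. The case $d \in \ch_c(a,c)$ is immediate: it directly yields $ac \to d$, contradicting the tightness of $abc \to d$. So the real work is the remaining subcases, where $a$ (resp. $c$) is in the binary hull of the other two.

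For those subcases, my plan is to use the monotonicity of the convex hull operator together with the tightness hypotheses. Suppose, for instance, $a \in \ch_c(c,d)$, i.e. $\tilde a \subseteq \CH(c,d)$. Then $\CH(a,b,c) \subseteq \CH(c,d,b,c) = \CH(b,c,d)$, but I want to push this the other way: since $d \in \ch_c(a,b,c)$ and $\tilde a \subseteq \CH(c,d)$, I would substitute to get $\tilde d \subseteq \CH(a,b,c) \subseteq \CH(\CH(c,d)\cup \tilde b \cup \tilde c) = \CH(b,c,d)$, which is automatic and not yet a contradiction — so I must instead exploit that $a$ being inside $\CH(c,d)$ makes $a$ \emph{between} $c$ and $d$ in a way that collapses the tight implication, arguing that then $d$ would already be forced into $\ch_c(b,c)$ or $\ch_c(a,c)$. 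Here I expect to lean on the arc/border analysis of Lemmas \ref{c1-eq}--\ref{c3-eq} and the Area $Q$ machinery, since configuration 1 forces the inclusion relations $\arck{X_1 X_\infty X_2} \subseteq \arck{Y_1 Y_0 Y_2}$ between the relevant tangent arcs, pinning down which binary hull already contains $\tilde d$.

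The main obstacle I anticipate is the subcase where an \emph{external} circle of the $a,c,d$ configuration 1 (namely $a$ or $c$) is the redundant one, rather than $d$; ruling this out cleanly is harder than the trivial $d \in \ch_c(a,c)$ case because it does not instantly contradict a tightness hypothesis, and I will need a careful geometric argument that configuration 1 for $a,c,d$ together with configuration 3 for $a,b,c$ forces $\tilde d \subseteq \CH(a,c)$ or $\tilde d \subseteq \CH(b,c)$ anyway. I would most likely organize this by comparing the positions of the centers $A,C,D$ and the tangent touching points on the circles, using that in configuration 1 the center of the redundant circle lies essentially on the segment between the other two centers, and then showing that $b$ cannot contribute anything to $\CH(a,b,c)$ near $d$ that is not already supplied by a single binary hull — contradicting tightness.
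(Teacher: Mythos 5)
You correctly reduce configuration 1 for $a,c,d$ to the three subcases of Lemma \ref{3 circles}(1), and the subcase $d \in \ch_c(a,c)$ is dispatched exactly as in the paper. But the two hard subcases you yourself identify --- where an external circle is the redundant one, i.e. $c \in \ch_c(a,d)$ or $a \in \ch_c(c,d)$ --- are never actually proved: your proposal ends with the announcement that you ``will need a careful geometric argument'' via the arc inclusions of Lemma \ref{c1-eq} and the Area $Q$ machinery, which is a plan, not a proof. Worse, the target you set for that plan is misaimed: in the subcase $c \in \ch_c(a,d)$ the implication that tightness forces is $ab \to d$ (equivalently $\tilde d \subseteq \CH(a,b)$), not $\tilde d \subseteq \CH(a,c)$ or $\tilde d \subseteq \CH(b,c)$ as you predict, so an attempt to pin $d$ into those two particular binary hulls would not succeed.

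The missing idea --- which makes the lemma a three-line argument with no geometry at all --- is that $\ch_c$ satisfies the anti-exchange property. Suppose $c \in \ch_c(a,d)$, i.e. $ad \to c$, and write $\varphi = \ch_c$, $Y = \varphi(\{a,b\})$. If $c,d \notin Y$, then $d \in \varphi(Y \cup \{c\})$ (from $abc \to d$) while also $c \in \varphi(\{a,d\}) \subseteq \varphi(Y \cup \{d\})$, contradicting anti-exchange; hence $c \in Y$ or $d \in Y$, and either way $ab \to d$, destroying the tightness of $abc \to d$. The symmetric argument with $Y = \varphi(\{b,c\})$ handles $a \in \ch_c(c,d)$, yielding $bc \to d$. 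This is exactly what the paper does (``combination with $abc\to d$ requires the additional implication $ab\to cd$, as in every convex geometry, due to the anti-exchange axiom''). Note also that your opening appeal to Lemma \ref{abc} (configuration 3 for $a,b,c$) is not needed anywhere in this lemma; it only enters later, in Lemma \ref{conf2}, when configuration 2 is excluded.
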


\begin{proof}
If $\CH(a,c,d)=\CH(a,c)$, then $d \in ch_c(a,c)$ and therefore $ac\to d$, so $abc\to d$ is not tight. If $\CH(a,c,d)=\CH(a,d)$, then $ad\to c$, and combination with $abc\to d$ requires the additional implication $ab\to cd$, as in every convex geometry, due to the anti-exchange axiom. This once again violates tightness of $abc\to d$, so $\CH(a,c,d) \neq \CH(a,d)$, and by the same argument $\CH(a,c,d) \neq \CH(c,d)$.
\end{proof}

Our next goal is to show that whenever $abc\to d$ is tight, circles $a,c,d$ will not form configuration 2, except in limit case with $d$ in the center. 

For this we show some extended version of Lemma \ref{Lemma42}.

Consider the setting: circle $c$ inscribed into angle with vertex $J$ and touching sides of angle at points $A_1,A_2$. We may adopt notation of Lemma \ref{Lemma42} 
and denote two disjoint regions of the angle outside $c$ as $w_1$ and $w_2$, and notation outlined in section \ref{AreaQ} for points $B_0,B_1,B_2$, relating to circle $b$, on circle $c$. Suppose points $B_0,B_1,B_2$ are on arc $\arck{A_1A_2}$ bordering $w_i$. 

Similar version of the statement should work for the case when two tangents of $c$ at $A_1,A_2$ are parallel lines.

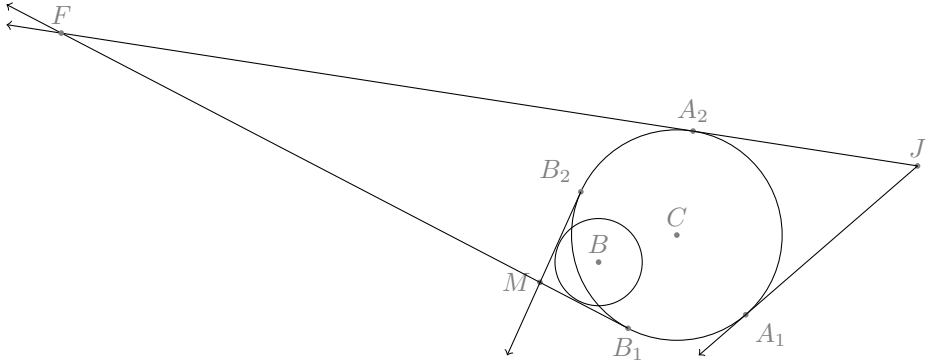
\begin{figure}[H]
    \centering
        \begin{tikzpicture}[scale=0.4]
            \draw (0,0) circle[radius=3.5];
            \filldraw [gray] (0,0) circle[radius=0.075] node[anchor=south] {$C$};
            \draw (-2.6,-0.9) circle[radius=1.45];
            \filldraw [gray] (-2.6,-0.9) circle[radius=0.075] node[anchor=south] {$B$};
            \filldraw [gray] (8,2.3) circle[radius=0.075] node[anchor=south] {$J$};
            \filldraw [gray] (-4.55,-1.57) circle[radius=0.075] node[anchor=east] {$M$};
            \filldraw [gray] (0.54,3.46) circle[radius=0.075] node[anchor=south] {$A_2$};
            \filldraw [gray] (2.29,-2.65) circle[radius=0.075] node[anchor=north west] {$A_1$};
            \draw [->] (8,2.3) -- (-22.28,7);
            \draw [->] (8,2.3) -- (0.73,-4);
            \filldraw [gray] (-3.19, 1.44) circle[radius=0.075] node[anchor=south east] {$B_2$};
            \filldraw [gray] (-1.62, -3.1) circle[radius=0.075] node[anchor=north] {$B_1$};
            \draw [->] (-1.62, -3.1) -- (-22.28,7.67);
            \filldraw [gray] (-20.46,6.72) circle[radius=0.075] node[anchor=south] {$F$};
            \draw [->] (-3.19, 1.44) -- (-5.64,-4);
        \end{tikzpicture}
    \caption{circle $b$ and $w_2$}
    \label{fig:W1}
\end{figure}

\begin{lemma}\label{W1}
If $B_0,B_1,B_2$ related to circle $b$ form a sub-arc $\arck{B_1B_0B_2}$ on arc $\arck{A_1A_2}$ of circle $c$ bordering area $w_i$, then $\tilde{b}\subseteq w_i\cup \tilde{c}$.
\end{lemma}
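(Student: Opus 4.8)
The plan is to reduce the whole statement to the region $Q_c(b)=\CH(b,c)\setminus\tilde c$ and then to use the two parts of Lemma \ref{Q-one} in tandem. First I observe that $\tilde b\subseteq\CH(b,c)=\tilde c\cup Q_c(b)$, so it suffices to prove $Q_c(b)\subseteq w_i$; the desired conclusion $\tilde b\subseteq w_i\cup\tilde c$ then follows at once.

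The first key step is to pin down the border arc of the hull. By hypothesis the arc $\arck{B_1 B_0 B_2}$ sits on the arc $\arck{A_1 A_2}$ bordering $w_i$, so $B_1,B_2$ lie between $A_1$ and $A_2$ on that arc, and consequently both touching points $A_1,A_2$ lie on the complementary arc $\arck{B_1 B_\infty B_2}$ (the one through $B_\infty$, since $B_0$ is on the $w_i$-arc). By Lemma \ref{Q-one}(1) this complementary arc is part of the border of the convex set $\CH(b,c)$, so $A_1$ is a point of $Circ(c)$ lying on the boundary of $\CH(b,c)$ along a circular arc; hence the tangent line to $c$ at $A_1$ — which is exactly the side of the angle touching $c$ at $A_1$ — is a supporting line of $\CH(b,c)$, and the whole hull lies in the closed half-plane bounded by it that contains $c$. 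The same argument applies at $A_2$. Intersecting the two half-planes shows that $\CH(b,c)$, and in particular $Q_c(b)$, is contained in the angle at $J$.

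The second step confines $Q_c(b)$ to the correct side of $c$. Since $c$ is inscribed and tangent to both sides at $A_1,A_2$, removing the closed disk $\tilde c$ from the angle splits it into exactly the two components $w_1$ and $w_2$, which come together only at the excluded tangent points $A_1,A_2\in Circ(c)$. Now $Q_c(b)$ is connected (it is the crescent bounded by the arc $\arck{B_1 B_0 B_2}$ of $c$, the two common tangent segments, and an arc of $b$) and, by the previous step, contained in $w_1\sqcup w_2$. By Lemma \ref{Q-one}(2), $Q_c(b)$ has points arbitrarily close to the interior of $\arck{B_1 B_0 B_2}$; such points lie just outside $\tilde c$, inside the angle, and away from the two sides, hence inside $w_i$. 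A connected set lying in $w_1\sqcup w_2$ that meets $w_i$ must lie entirely in $w_i$, so $Q_c(b)\subseteq w_i$ and the proof is complete. (Alternatively, once $\CH(b,c)$ is known to lie in the angle, the non-crossing conclusion of Lemma \ref{Lemma42} can replace the connectivity argument.)

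I expect the main obstacle to be the topological claims rather than the metric geometry: verifying that $Q_c(b)$ is connected, and that deleting the inscribed disk disconnects the angle into precisely $w_1$ and $w_2$ with no passage around $A_1,A_2$. The tangencies create cusps where $w_1$ and $w_2$ become arbitrarily close, and one must argue that the only candidate connection lies on $Circ(c)$ and is thus excluded from $Q_c(b)$. The genuinely load-bearing point, and the one to state carefully, is the identification of $A_1,A_2$ as points of the hull's border arc $\arck{B_1 B_\infty B_2}$ — this is what turns the two sides of the angle into supporting lines — and it should be phrased to cover the limit case $B_1=A_1$ or $B_2=A_2$, where the side of the angle is itself a common tangent of $b$ and $c$.
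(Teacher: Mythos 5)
Your argument is correct, but it takes a genuinely different route from the paper's. The paper proves Lemma \ref{W1} kinematically: it rotates the tangent ray along $Circ(c)$ from $A_1$ to $B_1$ and from $A_2$ (or $A_2F$) to $B_2$, tracks the intersection point of the two moving tangents, shows this point $M$ stays in $w_2$, and concludes $\tilde{b}\subseteq \CH(M,c)\subseteq w_2\cup\tilde{c}$, with the non-intersecting-rays case and the region $w_1$ handled by separate remarks. You instead argue softly: from $\arck{B_1B_0B_2}\subseteq\arck{A_1A_2}$ you get $A_1,A_2$ on the closed border arc $\arck{B_1B_\infty B_2}$ of Lemma \ref{Q-one}(1); a supporting line of $\CH(b,c)$ at $A_i$ exists there and, since $\tilde{c}\subseteq\CH(b,c)$, must support the disk $\tilde{c}$ at $A_i$, hence equals the side of the angle, so intersecting the two half-planes traps $\CH(b,c)$ in the angle; then Lemma \ref{Q-one}(2) plus connectedness of $Q_c(b)$ places $Q_c(b)$ in the component $w_i$. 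Your approach buys symmetry in $w_1,w_2$ (no separate case analysis or ray-crossing bookkeeping), handles the limit case $B_i=A_i$ automatically (the hull boundary meets the tangent segment tangentially at $B_i$, so the side is still a supporting line), and transfers verbatim to the parallel-tangent strip variant that the paper only mentions in a remark. The paper's argument, in exchange, is constructive: it exhibits an explicit bounding region $\CH(M,c)$ with $[CM)\cap Circ(c)=B_0$ and needs no topological input.

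Two points deserve tightening. First, connectivity of $Q_c(b)$ should get one line rather than a parenthesis: for $x\in Q_c(b)$ write $x\in[p,q]$ with $p\in\tilde{b}$, $q\in\tilde{c}$; the set $[p,q]\cap\tilde{c}$ is a closed subsegment containing $q$ but not $x$, and it cannot contain $p$ (else it would be all of $[p,q]$ and contain $x$), so $[p,x]$ avoids $\tilde{c}$ and joins $x$ within $Q_c(b)$ to the lune $\tilde{b}\setminus\tilde{c}$, which is connected. Also note the disconnection claim: the two cusps of the angle minus $\tilde{c}$ meet only at $A_1,A_2\in\tilde{c}$, which are excluded, and the near-arc points should be taken near $B_0$, which lies in the relative interior of $\arck{A_1A_2}$, so they genuinely fall in $w_i$. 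Second, your parenthetical alternative does not quite work as stated: Lemma \ref{Lemma42} applies to a circle inside the angle and would show $b$ cannot meet both $w_1$ and $w_2$, but it does not by itself identify \emph{which} region $b$ meets; the points of $Q_c(b)$ near $\arck{B_1B_0B_2}$ need not lie in $\tilde{b}$, so the connectivity argument through $Q_c(b)$ remains the load-bearing step.
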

 
 In Figure \ref{fig:W1} circle $b$ satisfying requirement on points $B_0,B_1,B_2$ can be located inside $\CH(M,c)$, where $M$ is the intersection of tangent lines at $B_1$ and $B_2$.
 
\begin{proof}
We start from the case of infinite region $w_2$. 
 Given point $B_1$ on the arc $\arck{A_1A_2}$ bordering $w_2$, we think of ray $[B_1)$ as obtained from rotating ray $[A_1)$ that is on line $(A_1J)$ and does not have $J$: point $A_1$ moves along the arc $\arck{A_1B_1A_2}$ until it arrives at $B_1$, while the ray is tangent to circle $c$ at every location.
 
 Ray $[B_1)$ may be completely inside angle $\angle A_1JA_2$, or it may cross side $[JA_2)$. Assume that it does cross it, say, at point $F$. 
 
 Now we will be rotating ray $[A_2F)$ so that $A_2$ goes along the arc $\arck{A_2B_2A_1}$ until $A_2$ reaches $B_2$, and at every location the ray remains to be tangent to $c$. If we watch the point of intersection with line $(B_1F)$, then it moves from $F$ toward $B_1$, so it remains inside area $w_2$. In particular, the final location is point $M$, the point of intersection of two rays $[B_1)$ and $[B_2)$. 
 
 We must have $[CM) \cap Circ(c)=B_0$
 and $\CH(M,c)\subseteq w_2\cup \tilde{c}$. 
 Since any circle $b$ with touching points $B_1,B_2$ and center on $[CM)$ must be in $\CH(M,c)$, we obtain  $\tilde{b}\subseteq w_2\cup \tilde{c}$.
 
 In case none of rays $[B_1), [B_2)$ intersect with the sides of angle $\angle A_1JA_2$, we conclude that $[B_1),[B_2)\subseteq w_2$, thus, $\tilde{b}\subseteq \CH([B_1),[B_2),c)\subseteq w_2$.
 
 Similar argument works for region $w_1$, except rays $[B_1)$ and $[B_2)$, obtained by rotation of $[A_1J)$ and $[A_2J)$, respectively, always intersect with sides of angle $\angle A_1JA_2$.
\end{proof}

\begin{lemma}\label{cNotCenter}
If circle $c$ is at the center of configuration 2 for $a,c,d$ and $b,c,d$, then $d\not \in ch_c(a,b,c)$.
\end{lemma}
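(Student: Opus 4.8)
The plan is to separate circle $d$ from $a,b,c$ by a convex region, exploiting that $c$ sits between $d$ and each of $a,b$. Since $c$ is at the center of configuration 2 for both $a,c,d$ and $b,c,d$, we have $\CH(a,c,d)=\CH(a,c)\cup\CH(c,d)$ and $\CH(b,c,d)=\CH(b,c)\cup\CH(c,d)$, and in particular no two of these circles are nested, so all the points $X_i$ on $Circ(c)$ are defined. Taking $c$ as the base (center) circle, let $D_0,D_1,D_2,D_\infty$ be the points associated to $d$ on $Circ(c)$, so that the common tangents of $c$ and $d$ touch $Circ(c)$ at $D_1,D_2$ and $D_\infty$ is the point of $Circ(c)$ farthest from $d$. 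First I would apply Lemma \ref{c2-eq}(a) to the triple $a,c,d$ (with $c$ the center and $a,d$ external) to obtain $\arck{A_1A_0A_2}\subset\arck{D_1D_\infty D_2}$, and likewise to $b,c,d$ to obtain $\arck{B_1B_0B_2}\subset\arck{D_1D_\infty D_2}$. Thus the tangency and near points of both $a$ and $b$ on $Circ(c)$ all lie on the ``back'' arc $\arck{D_1D_\infty D_2}$, the arc of $Circ(c)$ facing away from $d$.

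Next I would form the angle in which $c$ is inscribed by the two common tangents of $c$ and $d$; these lines touch $Circ(c)$ exactly at $D_1,D_2$ (playing the role of $A_1,A_2$ in Lemma \ref{W1}), and I write $J$ for their vertex, using the parallel-strip version when $c$ and $d$ have equal radii. This angle is split by $c$ into two regions $w_1,w_2$ as in Lemma \ref{Lemma42}; the region on the side of $c$ away from $d$ is bordered by the arc $\arck{D_1D_\infty D_2}$, while $d$ lies in the opposite region, which is bordered by the near arc $\arck{D_1D_0D_2}$. Since the points $A_0,A_1,A_2$ (resp.\ $B_0,B_1,B_2$) lie on $\arck{D_1D_\infty D_2}$, Lemma \ref{W1} confines each of $a$ and $b$ to this back region together with $\tilde c$. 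That region together with $\tilde c$ is precisely $\CH(J,c)$ (an ``ice-cream cone'', or a rounded half-strip in the parallel case), hence convex, so $\CH(a,b,c)\subseteq\CH(J,c)$.

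Finally I would note that $\CH(J,c)$ is bounded on the $d$-side by the near arc $\arck{D_1D_0D_2}$ of $Circ(c)$, whereas $d$ extends strictly past this arc: the point of $d$ farthest from $c$, on the ray $[CD)$ beyond $D$, lies in the opposite region and therefore outside $\CH(J,c)\supseteq\CH(a,b,c)$. Hence $\tilde d\not\subseteq\CH(a,b,c)$, which is exactly $d\notin\ch_c(a,b,c)$. The argument has to be run through the cases according to whether $J$ sits behind $c$ (radius of $c$ smaller than $d$), beyond $d$ (radius of $c$ larger), or does not exist (equal radii, the strip case); in each regime one matches $w_1,w_2$ to the arc $\arck{D_1D_\infty D_2}$ so that $a,b$ always land on the side of $c$ opposite to $d$. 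I expect the main obstacle to be precisely this bookkeeping: checking that in every radius regime the back arc $\arck{D_1D_\infty D_2}$ bounds the region into which Lemma \ref{W1} pushes $a$ and $b$, and confirming that the resulting region-plus-$\tilde c$ is convex and omits a point of $d$.
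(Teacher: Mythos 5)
Your proposal is correct and follows essentially the same route as the paper's proof: both place $A_0,A_1,A_2$ and $B_0,B_1,B_2$ on the arc $\arck{D_1D_\infty D_2}$ of $Circ(c)$ (the content of Lemma \ref{c2-eq}(a)), apply Lemma \ref{W1} with the common tangents of $c,d$ to confine $\tilde a,\tilde b$ to the region away from $d$, and conclude from the disjointness of the two regions $w_1,w_2$ that $\tilde d\not\subseteq\CH(a,b,c)$. The only difference is presentational: the paper avoids your case analysis over the location of the vertex $J$ (so your identification of the containing set with $\CH(J,c)$ holds only in the regime $r_c<r_d$, as you yourself flag) by labeling the regions $w_1,w_2$ without loss of generality, while you spell out the convexity of the containing region and exhibit an explicit point of $d$ outside it, details the paper leaves implicit.
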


\begin{figure}[H]
    \centering
        \begin{tikzpicture}[scale=0.5]
            \draw (4,4) circle[radius=2.2];
            \filldraw [gray] (4,4) circle[radius=0.075] node[anchor=south] {$B$};
            \draw (4,-0.2) circle[radius=1.5];
            \filldraw [gray] (4,-0.2) circle[radius=0.075] node[anchor=south] {$A$};
            \draw (12,2) circle[radius=4.15];
            \filldraw [gray] (12,2) circle[radius=0.075] node[anchor=south] {$C$};
            \draw (17,2) circle[radius=2.15];
            \filldraw [gray] (17,2) circle[radius=0.075] node[anchor=south] {$D$};
            \filldraw [gray] (10.07,-1.67) circle[radius=0.075] node[anchor=south] {$B_2$};
            \draw (10.07,-1.67) -- (2.98,2.05);
            \filldraw [gray] (12.03,6.15) circle[radius=0.075] node[anchor=north] {$B_1$};
            \draw (12.03,6.15) -- (4.01,6.2);
            \filldraw [gray] (9.68,5.44) circle[radius=0.075] node[anchor=north] {$A_1$};
            \draw (9.68,5.44) -- (3.16,1.04);
            \filldraw [gray] (11.76,-2.14) circle[radius=0.075] node[anchor=south] {$A_2$};
            \draw (11.76,-2.14) -- (3.91,-1.7);
            \filldraw [gray] (13.66, 5.8) circle[radius=0.075] node[anchor=north] {$D_1$};
            \draw (13.66, 5.8) -- (17.86,3.97);
            \filldraw [gray] (13.66,-1.8) circle[radius=0.075] node[anchor=south] {$D_2$};
            \draw (13.66,-1.8) -- (17.86,0.03);
        
        \end{tikzpicture}
        \caption{$c$ is at the center in $b,c,d$}
    \label{fig:abcd}
\end{figure}
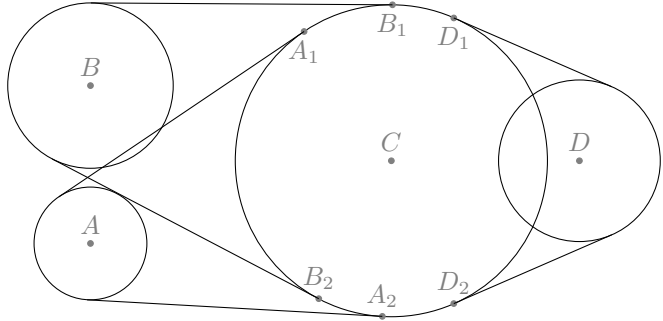

\begin{proof}
 Note that $c$ cannot have radius 0 in the described configurations.
 
 Let $D_1,D_2,D_0,D_\infty$ on $c$ be as specified in section \ref{AreaQ}.

 Then $A_1A_2,B_1,B_2$, the touching points on $Circ(c)$ from circles $a,b$, do not alternate with $D_1,D_2$; thus, all are located on arc $\arck{D_1D_\infty D_2}$.
 %that does not have $D_0$. 
 By assumption on configurations, points $A_0$ and $B_0$ are also on the same arc. 
 
 Let us assume wlog that the region between tangents bordered by this arc is $w_2$.  Then by Lemma \ref{W1} we have $\tilde{a},\tilde{b}\subseteq w_2\cup \tilde{c}$. On the other hand, $\tilde{d}\subseteq w_1\cup\tilde{c}$, where we call $w_1$ the region formed by extending tangent segments between $c,d$ beyond touching points on $d$, and bordered by arc $\arck{D_1D_0D_2}$. Since $w_1,w_2$ are disjoint regions, $d$ cannot be in $ch_c(a,b,c)$ (unless $d$ is inside $c$, which we exclude).  
\end{proof}

\begin{lemma}\label{conf2}
Let circles $a,b,c$ form configuration 3 and $abc\to d$. If $a,c,d$ form configuration 2 with center $c$, then $bc\to d$.
\end{lemma}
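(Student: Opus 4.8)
The plan is to prove $bc\to d$ directly, by showing that every point of $\tilde d$ already lies in $\CH(b,c)$. The guiding picture is that, since $a,c,d$ are in configuration 2 with $c$ in the center, circle $a$ sits entirely on the far side of $c$ from $d$, so $a$ can contribute nothing toward covering $\tilde d$; the covering forced by $abc\to d$ must therefore be supplied by $b$ and $c$ alone.

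First I would localize the circles using the tangent-point machinery of Section \ref{AreaQ}. Let the two common tangents of $c$ and $d$ touch $Circ(c)$ at $D_1,D_2$ and meet (when $c,d$ are unequal) at the external similitude center $J$; they split $Circ(c)$ into the near arc $\arck{D_1 D_0 D_2}$ and far arc $\arck{D_1 D_\infty D_2}$, and split the angle at $J$ into a region $w_1$ on the $d$-side (bounded by the near arc) and the opposite region $w_2$ (bounded by the far arc). Because $a,c,d$ are in configuration 2 with center $c$, part (a) of Lemma \ref{c2-eq} places $a$'s near arc $\arck{A_1 A_0 A_2}$ inside $d$'s far arc $\arck{D_1 D_\infty D_2}$, so $A_0,A_1,A_2$ border $w_2$, and Lemma \ref{W1} then gives $\tilde a\subseteq w_2\cup\tilde c$. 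Also $\tilde d\subseteq w_1\cup\tilde c$ by construction. (If $c,d$ are equal the tangents are parallel, and I would use the parallel-strip versions of Lemmas \ref{Lemma42} and \ref{W1} noted in the text, with the strip replacing the angle.)

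Next I set $K=\overline{w_1}\cup\tilde c$ and $L=\overline{w_2}\cup\tilde c$ and record three region facts: $K=\CH(c,J)$ and $L=\CH(c,\rho_1,\rho_2)$, where $\rho_1,\rho_2$ are the tangent rays bounding $w_2$, are convex; $K\cup L$ equals the closed angle at $J$, which is convex; and $K\cap L=\tilde c$, since the open regions $w_1,w_2$ are separated by $c$ and their closures meet only at $D_1,D_2\in\tilde c$. From $\tilde a,\tilde c\subseteq L$ and convexity of $L$ I get $\CH(a,c)\subseteq L$, while $\tilde d\subseteq K$. Now I run the covering argument: writing $\CH(a,b,c)=\CH\bigl(\CH(a,c)\cup\CH(b,c)\bigr)$, any point $p\in\tilde d\subseteq\CH(a,b,c)$ has the form $p=\lambda q+(1-\lambda)r$ with $q\in\CH(a,c)\subseteq L$, $r\in\CH(b,c)$, $\lambda\in[0,1]$. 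The segment $[q,p]$ lies in the convex set $K\cup L$ and joins $q\in L$ to $p\in K$; as $K,L$ are closed and $[q,p]$ is connected, it must meet $K\cap L=\tilde c$ at some point $w$. Since $w\in[q,p]\subseteq[q,r]$ and $p\in[q,r]$, the order along the line is $q,w,p,r$, so $p\in[w,r]$ with $w\in\tilde c\subseteq\CH(b,c)$ and $r\in\CH(b,c)$; convexity yields $p\in\CH(b,c)$. Hence $\tilde d\subseteq\CH(b,c)$, that is, $bc\to d$.

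The hard part will be the localization step: carefully confirming that $a$'s tangent points fall on $d$'s far arc (so that Lemma \ref{W1} applies with the correct region $w_2$) and verifying the region facts, especially $K\cap L=\tilde c$ and the convexity of $K$ and $L$, including the degenerate equal-radius case with parallel tangents. Once these are established the final covering step is short. I would also remark that this argument does not appear to use the hypothesis that $a,b,c$ are in configuration 3: that hypothesis is automatically available in the intended application, where $abc\to d$ is tight and Lemma \ref{abc} forces configuration 3, but it is not invoked in the proof itself.
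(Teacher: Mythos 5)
Your proof is correct, and it takes a genuinely different route from the paper's. The paper argues by classifying the configuration of $b,c,d$: using configuration 3 of $a,b,c$ and the arc machinery, it shows the touching points $B_i,D_i$ cannot alternate on $c$, so $b,c,d$ must be in configuration 1 or 2, and it then eliminates every case except ``$d$ in the center of configuration 1'' (which is exactly the conclusion $bc\to d$) by separate contradictions: anti-exchange when $b$ is the center of configuration 1, Lemma \ref{cNotCenter} when $c$ is the center of configuration 2, alternation of $A_i,B_i$ when $d$ is the center, and Lemma \ref{W1} together with a special perturbation argument for $r_d=0$ when $b$ is the center of configuration 2. You instead prove the containment $\tilde{d}\subseteq \CH(b,c)$ directly: you use the same two ingredients the paper deploys inside Lemma \ref{cNotCenter} --- Lemma \ref{c2-eq}(a) to place $a$'s touching points on $d$'s far arc of $c$, then Lemma \ref{W1} to get $\tilde{a}\subseteq w_2\cup\tilde{c}$ --- and finish with a clean convexity/connectedness step: any segment from $\CH(a,c)\subseteq L$ to a point of $\tilde{d}\subseteq K$ must cross $K\cap L=\tilde{c}$, so every point of $\tilde{d}\subseteq\CH(a,b,c)$ lies on a segment $[w,r]$ with $w\in\tilde{c}$ and $r\in\CH(b,c)$, hence in $\CH(b,c)$. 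This buys a shorter, case-free proof and, as you correctly observe, a slightly stronger statement: the configuration 3 hypothesis on $a,b,c$ (and the anti-exchange step the paper needs in its configuration 1 case) is never used, so your argument shows that configuration 2 of $a,c,d$ with center $c$ plus $abc\to d$ alone forces $bc\to d$. What the paper's route buys in exchange is that it stays entirely within the arc-alternation toolkit reused in Theorem \ref{NC2} and the Point Order Theorem, and it incidentally identifies the configuration of $b,c,d$. Two small points to tidy when writing this up: your identification $K=\CH(c,J)$ and $L=\CH(c,\rho_1,\rho_2)$ tacitly assumes the vertex $J$ lies on the $d$-side of $c$, which happens exactly when $r_d<r_c$; when $r_d>r_c$ the two descriptions swap (and $r_d=r_c$ is the parallel-strip case you already flagged), though in all cases the three region facts you need --- convexity of $K$ and $L$, $K\cup L$ equal to the closed angle or strip, and $K\cap L=\tilde{c}$ --- hold unchanged. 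Also note, as the paper does, that the center $c$ of a configuration 2 necessarily has positive radius, which your tangent construction requires.
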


\begin{proof}
Let $A_0,A_\infty, D_0,D_\infty$ on $c$ be as specified in section \ref{AreaQ}.

Since $c$ is in the center of $a,c,d$ in configuration 2, touching points $A_i,D_i$ on $c$ do not alternate (and $c$ has a positive radius). 
Then $D_1,D_2$ are located on arc $\arck{A_1A_\infty A_2}$.
%that does not have $A_0$.
 
 Since $a,b,c$ are in configuration 3, touching points $A_i,B_i$ alternate on $c$. Without loss of generality assume that $\arck{A_1B_1}$ is an arc where $Q_c(a,b)$ is attached, then $\arck{A_2B_2}$ is the arc on $c$ which is part of the border of $\CH(a,b,c)$. Due to the latter fact, arc $\arck{A_2B_2}$ cannot be on $\arck{A_1A_0A_2}$, which is inside $\CH(a,c)$, therefore, arc  $\arck{A_2B_2}$ is on $\arck{A_1A_\infty A_2}$.
 %that does not have $A_0$. 
 Apparently, $D_1,D_2$ cannot be on arc $\arck{A_2B_2}$, therefore, they are on  arc $\arck{A_1A_\infty A_2}$
 %that does not have $A_0$ 
 from which we remove arc $\arck{A_2B_2}$. This is arc $\arck{B_2A_1}$, which is also a part of arc $\arck{B_2A_1B_1}$. We conclude that $D_1,D_2$ are also on the arc $\arck{B_2A_1B_1}$, therefore, $B_i,D_i$ do not alternate.
 
 This means that $b,c,d$ are in configuration 1 or 2. If $d$ is in the center of configuration 1, then $bc\to d$ and we are done. If, say, $b$ is in the  center of configuration 1, then $cd\to b$, thus, together with $abc\to d$ we conclude $ac\to bd$ due to the anti-exchange property, and $ac\to d$ contradicts the assumption that $c$ is in the center of $a,c,d$ in configuration 2.
 
 It remains to show that $b,c,d$ in configuration 2 will bring $bc\to d$ or a contradiction with $abc\to d$ or configuration 3 for $a,b,c$. 
 
 If $c$ is at the center of configuration 2 for $b,c,d$, then we have the setting of Lemma \ref{cNotCenter}, and we get to the contradiction with $abc\to d$. See Figure \ref{fig:abcd}.
 
 \begin{figure}[H]
    \centering
        \begin{tikzpicture}[scale=0.5]
            \draw (18.2,3.1) circle[radius=0.8];
            \filldraw [gray] (18.2,3.1) circle[radius=0.075] node[anchor=south] {$B$};
            \draw (2,2.9) circle[radius=0.75];
            \filldraw [gray] (2,2.9) circle[radius=0.075] node[anchor=south] {$A$};
            \draw (10,2.75) circle[radius=3.25];
            \filldraw [gray] (10,2.75) circle[radius=0.075] node[anchor=south] {$C$};
            \draw (13.5,3.2) circle[radius=3.05];
            \filldraw [gray] (13.5,3.2) circle[radius=0.075] node[anchor=south] {$D$};
            \filldraw [gray] (11.1,-0.31) circle[radius=0.075] node[anchor=north west] {$B_2$};
            \draw (11.1,-0.31) -- (18.47,2.35);
            \filldraw [gray] (10.84,5.89) circle[radius=0.075] node[anchor=south] {$B_1$};
            \draw (10.84,5.89) -- (18.41,3.87);
            \filldraw [gray] (9.04,5.86) circle[radius=0.075] node[anchor=south east] {$A_1$};
            \draw (9.04,5.86) -- (1.78,3.62);
            \filldraw [gray] (8.93,-0.32) circle[radius=0.075] node[anchor=north] {$A_2$};
            \draw (8.93,-0.32) -- (1.75,2.19);
            \filldraw [gray] (9.77,5.99) circle[radius=0.075] node[anchor=south] {$D_1$};
            \draw (9.77,5.99) -- (13.28,6.24);
            \filldraw [gray] (10.6,-0.44) circle[radius=0.075] node[anchor=north] {$D_2$};
            \draw (10.6,-0.44) -- (14.06,0.2);
        
        \end{tikzpicture}
    \caption{$d$ is at the center in $c,d,b$}
    \label{fig:acdb}
\end{figure}
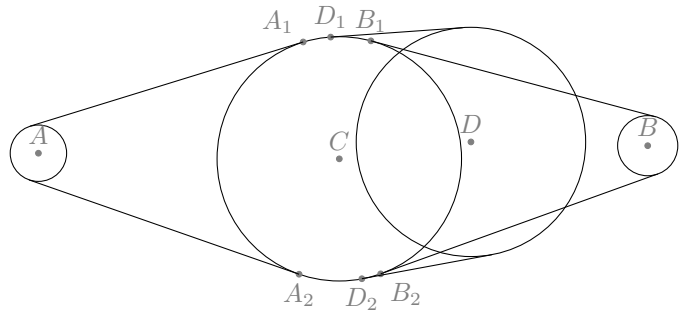

 Suppose $c,d,b$ are in configuration 2 and $d$ is in the center. Then on circle $c$ points $B_1B_2$ are located on the arc $\arck{D_1D_0D_2}$, while points $A_1,A_2$ are located on arc $\arck{D_1D_\infty D_2}$.
  
 Indeed, the latter is due to $a,c,d$ also in configuration 2 with $c$ in the center. Thus, $A_i,B_i$ do not alternate on $c$, a contradiction with assumption that $a,b,c$ in configuration 3. See Figure \ref{fig:acdb}.
 
 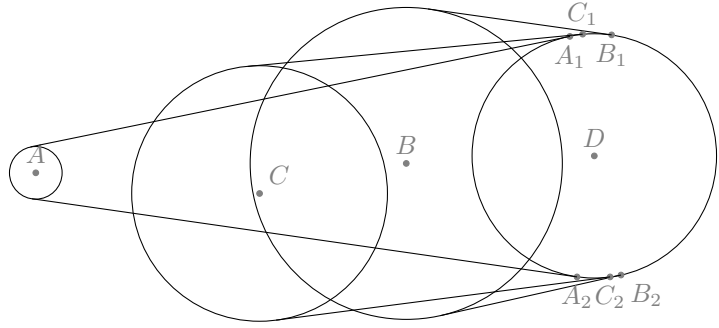
\begin{figure}[H]
    \centering
        \begin{tikzpicture}[scale=0.5]
            \draw (12.2,2) circle[radius=4.15];
            \filldraw [gray] (12.2,2) circle[radius=0.075] node[anchor=south] {$B$};
            \draw (2.35,1.75) circle[radius=0.7];
            \filldraw [gray] (2.35,1.75) circle[radius=0.075] node[anchor=south] {$A$};
            \draw (8.3,1.2) circle[radius=3.4];
            \filldraw [gray] (8.3,1.2) circle[radius=0.075] node[anchor=south west] {$C$};
            \draw (17.2,2.2) circle[radius=3.25];
            \filldraw [gray] (17.2,2.2) circle[radius=0.075] node[anchor=south] {$D$};
            \filldraw [gray] (17.91,-0.97) circle[radius=0.075] node[anchor=north west] {$B_2$};
            \draw (17.91,-0.97) -- (13.11,-2.05);
            \filldraw [gray] (17.66,5.42) circle[radius=0.075] node[anchor=north] {$B_1$};
            \draw (17.66,5.42) -- (12.78,6.11);
            \filldraw [gray] (16.55,5.38) circle[radius=0.075] node[anchor=north] {$A_1$};
            \draw (16.55,5.38) -- (2.21,2.44);
            \filldraw [gray] (16.74,-1.02) circle[radius=0.075] node[anchor=north] {$A_2$};
            \draw (16.74,-1.02) -- (2.25,1.06);
            \filldraw [gray] (16.89,5.44) circle[radius=0.075] node[anchor=south] {$C_1$};
            \draw (16.89,5.44) -- (7.98,4.58);
            \filldraw [gray] (17.62,-1.02) circle[radius=0.075] node[anchor=north] {$C_2$};
            \draw (17.62,-1.02) -- (8.74,-2.17);
        
        \end{tikzpicture}
    \caption{$b$ is at the center in $c,b,d$}
    \label{fig:acbd}
\end{figure}

 Finally, consider $b$ at the center of $c,b,d$, like in Figure \ref{fig:acbd}. For now, assume $d$ has a positive radius. Then on circle $d$ points $B_iC_i$ do not alternate
 , and $C_1,C_2,C_0$ are on the arc $\arck{B_1B_0B_2}$. But also, since $a,c,d$ are in configuration 2 with $c$ at the center, $A_1,A_2,A_0$ on $d$  are on the arc $\arck{C_1C_0C_2}$. Apply Lemma \ref{W1} replacing $c$ by $d$, points $A_1,A_2$ by $B_1,B_2$, lines $(A_1J),(A_2J)$ by tangents of $b,d$ and $B_1,B_2,B_0$ by either $A_1,A_2,A_0$ or by $C_1,C_2,C_0$. Then, circles $a,b,c$ are inside $w_i\cup \tilde{d}$, where $w_i$ is a region formed by tangents of $d$ at $B_1,B_2$ and bordered by arc $\arck{B_1B_0B_2}$. This implies that $d\not \in ch_c(a,b,c)$ (unless $b=d$, which we exclude). 
 
 The case when $d$ is a point follows from the above: if $d$ is not on a tangent of binary hulls within $\CH(a,b,c)$, we can pick some small radius for a circle $d'$ with center $d$ such that the assumptions hold ($abc ->d'$,$a,c,d'$ are in configuration 2 with center $c$, and $c,b,d'$ are in configuration 2 with $b$ in the center) and lead to a contradiction as before. If $d$ is on a tangent of $\CH(b,c)$ then the conclusion holds, and if $d$ is on a tangent of $\CH(a,b)$ or $\CH(a,c)$, we can replace both $d$ and $a$ with slightly larger circles such that the given assumptions hold and are again in the case above. 
\end{proof}

We finally collect all the previous observations in the following result.

\begin{theorem}\label{NC2}
If $abc\to d$ is tight, then $a,c,d$ is in configuration 3 or in limit case 2 with $d$ at the center.
\end{theorem}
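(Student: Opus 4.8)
The plan is to prove Theorem \ref{NC2} by enumerating the three possible configurations for the triple $a,c,d$ (as guaranteed by Lemma \ref{3 circles}, since none of the circles in a tight implication can be contained in another) and eliminating all but the two allowed cases. By Lemma \ref{NC1}, configuration 1 is immediately ruled out for $a,c,d$. This leaves configuration 2 and configuration 3, so the real content is to show that configuration 2 for $a,c,d$ is impossible \emph{except} in the limit case 2 with $d$ at the center.

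First I would invoke Lemma \ref{abc} to record that $a,b,c$ themselves must be in configuration 3 (otherwise tightness of $abc\to d$ fails). Then I would split configuration 2 for $a,c,d$ according to which circle sits in the center, giving three subcases: center $c$, center $a$, or center $d$. The subcase with center $c$ is exactly Lemma \ref{conf2}, which yields $bc\to d$ and hence contradicts the tightness of $abc\to d$ (since $bc$ is a proper subset of $abc$). For the subcase where $a$ is in the center, I would argue symmetrically: by the role-symmetry between $a$ and $c$ in the implication $abc\to d$, an analogous argument (swapping the roles of $a$ and $c$ in Lemma \ref{conf2}) gives $bc\to d$ or $ab\to d$, again violating tightness. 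The subcase where $d$ is in the center of configuration 2 for $a,c,d$ but the circles are \emph{not} in limit case 2 is what must be excluded; by the definition of limit case 2 and the dichotomy in Lemma \ref{c2-eq}(d)--(e), a non-limit configuration 2 with $d$ central means $d\in ch_c(a,c)$, which forces $ac\to d$ and destroys tightness. Thus only limit case 2 with $d$ central survives, matching the theorem's statement.

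The main obstacle I anticipate is the center-$d$ (non-limit) subcase, where one must carefully distinguish between $d$ lying strictly inside $\CH(a,c)$ versus $d$ touching the boundary tangents. In the strict-interior situation $ac\to d$ holds outright and tightness fails; the delicate point is the boundary/limit transition, where the touching points $A_i, C_i$ on $d$ coincide or alternate (cf. Lemma \ref{case4}), and one must confirm that precisely the limit case 2 with $d$ central escapes the contradiction. I would lean on the arc-alternation criteria of Lemmas \ref{c1-eq}--\ref{c3-eq} to detect which configuration actually occurs, and on Lemma \ref{case4} to characterize the limit transition. Once the center-$c$ case is handled by Lemma \ref{conf2} and the center-$a$ case by symmetry, the remaining bookkeeping is routine: every non-limit possibility produces an implication on a proper subset of $abc$ (namely $ac\to d$, $bc\to d$, or $ab\to d$), contradicting tightness, so the only admissible configurations for $a,c,d$ are configuration 3 or limit case 2 with $d$ at the center.
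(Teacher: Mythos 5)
Your overall decomposition matches the paper's: Lemma \ref{NC1} eliminates configuration 1, Lemma \ref{abc} puts $a,b,c$ in configuration 3 so that Lemma \ref{conf2} (and its $a\leftrightarrow c$ relabeling) rules out configuration 2 with $a$ or $c$ at the center. The genuine gap is in the one subcase that is the real content of the theorem: excluding a \emph{non-limit} configuration 2 with $d$ at the center. You claim that ``a non-limit configuration 2 with $d$ central means $d\in ch_c(a,c)$, which forces $ac\to d$.'' This is false by definition: in configuration 2 with $d$ at the center we have $\CH(a,d,c)=\CH(a,d)\cup\CH(d,c)$ with $d\notin ch_c(a,c)$ --- if $d$ were in $ch_c(a,c)$ the triple would be in configuration 1, which was already disposed of by Lemma \ref{NC1}. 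In a generic (non-limit) configuration 2 with $d$ central, $d$ protrudes past the common tangent lines of $a$ and $c$, meeting each in two points; it is not swallowed by $\CH(a,c)$, so no implication $ac\to d$ arises and your contradiction evaporates. The dichotomy in Lemma \ref{c2-eq}(d)--(e) only distinguishes whether the touching points $X_i,Y_i$ coincide and how many border arcs appear; it gives no containment statement of the kind you invoke, and Lemma \ref{case4} describes what happens when the radius of $d$ is \emph{decreased from} a limit position, not a criterion for excluding the generic case.

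The paper closes this subcase with a different, containment-based observation that your proposal is missing: since $abc\to d$, we have $\tilde{d}\subseteq\CH(a,b,c)$, and because $a,b,c$ are in configuration 3, one of the two common tangents of $a$ and $c$ (the outer one, on the side away from $b$) lies on the border of $\CH(a,b,c)$. A circle contained in $\CH(a,b,c)$ can therefore meet that tangent line in at most one point. But a non-limit configuration 2 with $d$ central would require $d$ to cross \emph{both} common tangents of $a,c$ in two points each. Hence $d$ can only touch the outer tangent at a single point, which is precisely the limit case 2 with $d$ at the center. If you replace your false containment claim with this boundary argument, the rest of your proof goes through and coincides with the paper's.
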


\begin{proof}
By Lemma \ref{NC1}, circles $a,c,d$ cannot be in configuration 1.
By Lemma \ref{abc} $a,b,c$ form configuration 3, therefore, the assumptions of Lemma \ref{conf2} hold. By Lemma \ref{conf2} if any $a$ or $c$ is in the center of configuration 2 for $a,c,d$, then the implication $abc\to d$ is not tight.

Thus, if $a,c,d$ form configuration 2, then only $d$ can be in the center. Moreover one of common tangents of $a,c$ has the border of $\CH(a,b,c)$, therefore, $d$ cannot intersect it in more than one point. Thus, if $d$ is in center of configuration 2 for $a,c,d$, then it is in limit case 2.
\end{proof}

\subsection{Point Order Theorem}
We will be establishing consistent labeling of touching points on circle $d$, when four circles satisfy tight implication  $abc \to d$, for the  underlying closure operator $ch_c$.

We first fix a consistent orientation of circles $a, b, c$ around $d$. 
We will set points on $Circ(d)$ using the approach described in section \ref{AreaQ}. Let ray $[DA)$ start at center $D$ and go through center $A$. The point of crossing $Circ(d)$ and  $[DA)$ is called $A_0$. 

The second point of intersection of $Circ(d)$ and $(AD)$ is denoted $A_{\infty}$. The same labeling conventions give $B_0$, $B_{\infty}$ from ray $[DB)$ and circle $b$. Furthermore, points $C_0$ and $C_{\infty}$ come from $[DC)$ and $c$. 

Now we establish the orientation. Consider circle $d$ as a clock with the center of rotation for the clock hand at $D$. Start the clock hand at $B_0$ and move counterclockwise. Without loss of generality, the circular order on $Circ(d)$ will be as follows:
$B_0,A_0,C_0$.

Now we establish the orientation of tangent points with respect to the various ray intersections on $d$. There are two tangent points on $d$ from $\CH(b,d)$ that we label $B_1$ and $B_2$. When starting at $B_1$ and walking counterclockwise, the circular order will be  $B_1, B_0, B_2$. The tangent points from $\CH(a,d)$ are $A_1$ and $A_2$ with $A_0$ inside $\CH(a,d)$. Starting at $A_1$, we have the circular order as $A_1,A_0,A_2$. For $\CH(c,d)$, we have points $C_1, C_2$. When walking counterclockwise, we get the point order $C_2, C_0, C_1$.

Finally, we label the outer tangents of $\CH(a,b,c)$ as $(A_bB_a)$, $(A_cC_a)$ and $(B_cC_b)$.

For any two non-collinear rays $[DA)$ and $[DC)$ we call \emph{proper angle} a smaller of two angles formed by these two rays.

\begin{theorem}[Point Order Theorem]\label{POrd}
With the counterclockwise orientation of $B_0,A_0,C_0$ and the associated labeling when $abc \to d$ is tight, the counterclockwise point order on $d$ beginning at $B_1$ is  $B_1, C_1, A_1, B_2, C_2, A_2$.
\end{theorem}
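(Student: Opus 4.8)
The plan is to determine, for each of the three external circles $a,b,c$, where its two tangent points on $\mathrm{Circ}(d)$ sit relative to the tangent points of the others, and to pin down the interleaving pattern forced by tightness. By Theorem \ref{NC2}, tightness of $abc\to d$ forces each of the pairs $a,c,d$ (and by the same reasoning $a,b,d$ and $b,c,d$) to be in configuration 3, or in limit case 2 with $d$ at the center. In configuration 3, Lemma \ref{c3-eq}(a) tells us that the two tangent point pairs on $d$ \emph{alternate}: for instance $A_1,A_2$ and $C_1,C_2$ interleave around $\mathrm{Circ}(d)$. So the first step is to record that $\{A_1,A_2\}$, $\{B_1,B_2\}$, and $\{C_1,C_2\}$ pairwise alternate on $d$.

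The second step is to fix the anchoring provided by the $0$-points. We are given the counterclockwise order $B_0,A_0,C_0$, and the local conventions $B_1,B_0,B_2$, $A_1,A_0,A_2$, and $C_2,C_0,C_1$ (each $0$-point lies on the $\arck{\cdot_1 \cdot_0 \cdot_2}$ arc, which is the arc inside the corresponding binary hull with $d$). I would think of each external circle as contributing a closed arc $\arck{X_1X_0X_2}$ (the ``near'' arc, inside $\CH(x,d)$) and its complementary ``far'' arc $\arck{X_1X_\infty X_2}$. Since $A_0,B_0,C_0$ are distinct and in the stated cyclic order, the three near-arcs are centered at three distinct, cyclically ordered locations. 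The tangent points are precisely the endpoints of these three arcs, so the whole problem reduces to showing that the six endpoints come in the cyclic pattern $B_1,C_1,A_1,B_2,C_2,A_2$.

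The main work — and the step I expect to be the genuine obstacle — is ruling out the ``wrong'' interleavings consistent with both the alternation condition and the $0$-point ordering. Pairwise alternation of three pairs on a circle is a strong combinatorial constraint, but it does not by itself single out one cyclic word; I must use the $0$-point positions to break the symmetry. Concretely, starting from $B_1$ and walking counterclockwise, I know $B_0$ comes before $B_2$, and the first $0$-point encountered after $B_0$ is $A_0$, then $C_0$. I would argue that between $B_1$ and $B_2$ (passing through $B_0$) exactly one endpoint from each of the other two pairs must lie — this is forced because each other pair alternates with $\{B_1,B_2\}$ — and then use the order $A_0$-before-$C_0$ together with the local conventions ($A_1$ precedes $A_0$, and $C_0$ precedes $C_1$ in the counterclockwise sense, since the convention for $c$ is $C_2,C_0,C_1$) to decide \emph{which} endpoint of $a$ and of $c$ falls in that span. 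Carefully tracking these, the endpoints landing in the arc $\arck{B_1B_0B_2}$ should be $C_1$ and $A_1$, in the order $C_1$ then $A_1$; the complementary arc $\arck{B_2B_\infty B_1}$ then contains $C_2,A_2$, and combining gives the claimed order $B_1,C_1,A_1,B_2,C_2,A_2$.

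Finally I would check that the limit-case-2 degenerate possibilities (where some $X_i$ coincide, per Lemma \ref{c2-eq}(f) and Lemma \ref{case4}) do not violate the cyclic order but only collapse adjacent points, so the stated order persists as a weak (non-strict) cyclic order; and I would note that the labeling ``$A_bB_a$'' etc.\ for the outer tangents of $\CH(a,b,c)$ is consistent with the endpoint order just derived. The delicate bookkeeping is entirely in the third step — translating the three simultaneous alternation constraints plus the three $0$-point positions into a unique cyclic word — and I would likely verify it against the configuration pictures (Figures \ref{L4} and the configuration-3 illustration) to be sure no orientation sign has been flipped.
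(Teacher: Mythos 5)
Your reduction to pure combinatorics fails at precisely the step you flag as ``the genuine obstacle'': the constraints you permit yourself --- pairwise alternation of $\{A_1,A_2\}$, $\{B_1,B_2\}$, $\{C_1,C_2\}$ (via Theorem \ref{NC2} and Lemma \ref{c3-eq}), the local conventions $B_1,B_0,B_2$; $A_1,A_0,A_2$; $C_2,C_0,C_1$, and the counterclockwise cyclic order $B_0,A_0,C_0$ --- do \emph{not} single out the word $B_1,C_1,A_1,B_2,C_2,A_2$. Concrete counterexample: let $a,b,c$ all have radius equal to $r_d$, with centers in directions $150^\circ$, $90^\circ$, $30^\circ$ from $D$ respectively. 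Since equal-radius circles have external tangents parallel to the center line, the tangent points sit at $\theta\pm 90^\circ$, giving $B_1=0^\circ$, $A_1=60^\circ$, $C_1=120^\circ$, $B_2=180^\circ$, $A_2=240^\circ$, $C_2=300^\circ$, with $C_0=30^\circ$, $B_0=90^\circ$, $A_0=150^\circ$. Every one of your hypotheses checks out: the three pairs alternate; $B_1,B_0,B_2$ and $A_1,A_0,A_2$ and $C_2,C_0,C_1$ are each counterclockwise; the cyclic order of the ray points is $B_0,A_0,C_0$; even the nonempty-intersection conditions of Lemma \ref{c3-eq}(b) on all near-arc and far-arc pairs hold. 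Yet the resulting word is $B_1,A_1,C_1,B_2,A_2,C_2$: the arc $\arck{B_1B_0B_2}$ contains $A_1$ before $C_1$, the exact opposite of what you assert is ``forced.'' So no amount of careful tracking in your third step can succeed; the conclusion simply does not follow from those premises.

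What rescues the theorem, and what your proposal never invokes, is the part of the tightness hypothesis beyond alternation: by the Triangle Property (Lemma \ref{triangle}), $D$ lies in the interior of $\triangle ABC$, so the rays $[DA),[DB),[DC)$ cannot lie in a closed half-plane --- whereas in the counterexample they span only $120^\circ$, so $abc\to d$ is not tight there. One can check (using that $X_1,X_2$ are symmetric about the line $(DX)$, so $X_1=\theta_x-\delta_x$ and $X_2=\theta_x+\delta_x$ for some half-angle $\delta_x$) that the impostor word forces the three center directions into a semicircle; a repaired version of your argument would therefore need both this symmetry of tangent points about the center rays and the Triangle Property as additional inputs, neither of which appears in your constraint list, and you would still have to exclude all remaining impostor words. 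The paper sidesteps this entirely by arguing geometrically rather than combinatorially: it anchors the order in the case where $d$ is inscribed in $\CH(a,b,c)$ (where the six points collapse in pairs and the order is immediate), then treats the cases where $d$ touches two, one, or none of the outer tangents $(A_bB_a)$, $(A_cC_a)$, $(B_cC_b)$, handling the generic case by inflating $d$ until it touches a side, applying the tangential cases, and deflating while Lemma \ref{case4} tracks how coincident points split into alternating ones. The global position of $d$ inside $\CH(a,b,c)$ is used at every stage --- and that is exactly the information your combinatorial skeleton discards.
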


\begin{proof}
By Theorem \ref{NC2}, triples of circles: $\{a,c,d\}$, $\{b,c,d\}$, and $\{a,b,d\}$, are only in limit case 2 with $d$ at the center or the convex hulls are in configuration 3. Since $d$ is in $\CH(abc)$, we may start with the case where $d$ is touching all three lines  
%is inscribed in triangle formed by lines 
$(A_bB_a)$, $(A_cC_a)$ and $(B_cC_b)$.

Then we proceed with cases where $d$ is tangent to two of these lines, one of these lines, and finally none of these lines.

\textbf{\textit{Case 1:} $d$ is inscribed in $\CH(a,b,c)$.}

\begin{figure}[H]
\centering
\vspace{1.5mm}
 \includegraphics[scale = 0.8]{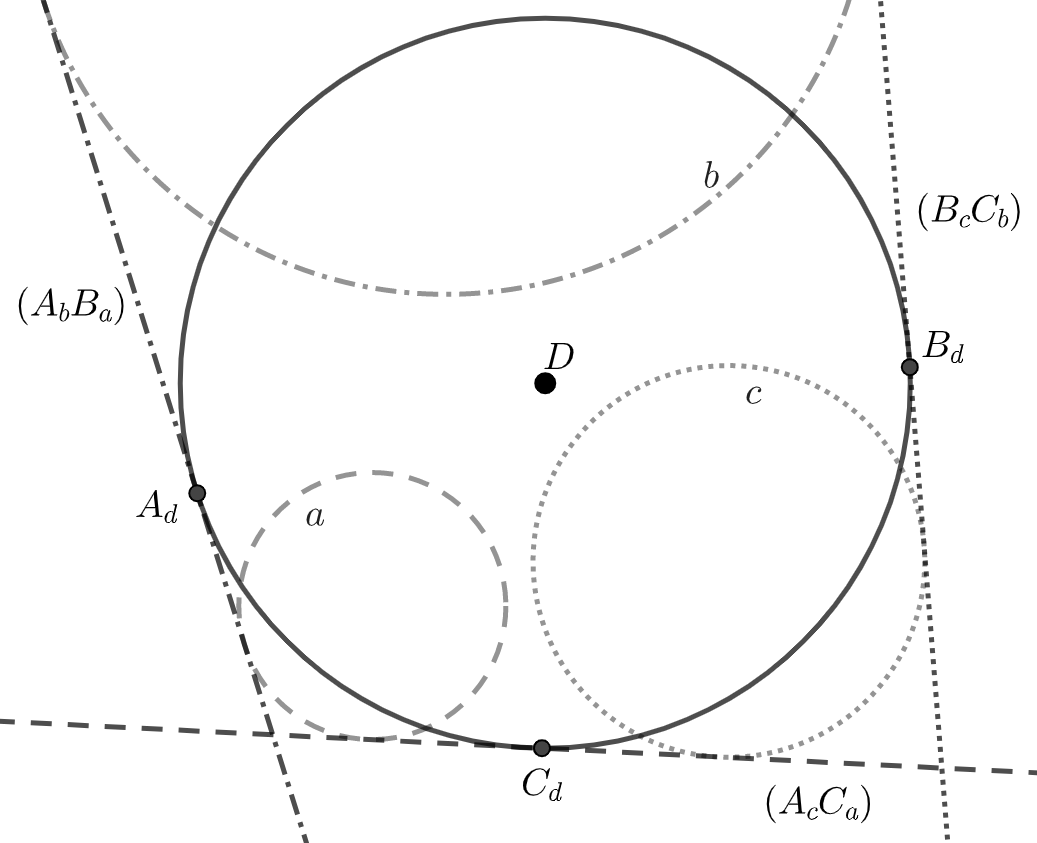}
 \caption{Theorem \ref{POrd}}
  \label{N1}
\end{figure}

This is the case where $d$ is tangent to $(A_bB_a)$, $(A_cC_a)$, and $(B_cC_b)$. For discussion convenience, label the tangent points on $d$ as $B_d$,$A_d$ and $C_d$, on lines $(B_cC_b)$, $(A_cC_a)$ and $(B_cC_b)$, respectively.

Then, according to agreement on orientation of $a,b,c$ around $d$, we have the following coincidence of points:
\[
B_1=C_1=B_d,\ \   A_1=B_2 = A_d, \ \ C_2= A_2 = C_d
\]

Therefore, the counter-clock sequence is, indeed, 
\[
B_1, C_1=B_1, A_1, B_2=A_1, C_2, A_2=C_2.
\]

\textbf{\textit{Case 2:} $d$ is tangent to $(A_bB_a)$ and $(A_cC_a)$ but not $(B_cC_b)$ }.

This new condition 
does not change the assumption that $A_1=B_2= A_d$ and $C_2=A_2 = C_d$. 

Now $b,c,d$ are in configuration 3 by Theorem \ref{NC2}, which means $B_i, C_i$ alternate around $d$ by Lemma %\ref{conf}
\ref{c3-eq}. 
%\ka{I placed a new reference here.} 
This allows the counter-clockwise point order $B_1, C_1, B_2, C_2,$ or, the order $B_1, C_2, B_2, C_1$. The latter order is not possible, because circles $a,c,d$ are in limit case 2 with center $d$, therefore, 
by Lemma \ref{c2-eq}, $\arck{A_1 A_{0} A_2} ~\subset ~\arck{C_1 C_{\infty} C_2}=~\arck{C_1 C_{\infty} A_2}$. 
But with the order $B_1, C_2, B_2, C_1$, the arc $\arck{C_2C_{0}C_1}$ will  contain point $B_2=A_1$ instead of the arc $\arck{C_2C_{\infty}C_1}$, a contradiction.

Therefore, the only possible order is  $B_1, C_1, B_2, C_2,$, and, thus, including $A_1,A_2$ into the sequence, we get
\[
B_1,C_1,A_1=B_2,B_2,C_2,A_2=C_2.
\]

\textbf{\textit{Case 3:} $d$ is tangent to $(A_cC_a)$ but not $(B_cC_b)$ and $(A_bB_a)$}.

We still have $C_2=A_2=C_d$, thus $d$ is at the center of limit case 2 for circles $a,c,d$. Therefore, the circular sequence of points on $d$  starting from $C_2=A_2$ will be either
\[
C_2, C_0, C_1, A_1, A_0, A_2=C_2
\]
or

\[
A_2,A_0,A_1,C_1,C_0,C_2=A_2
\]

But only first sequence satisfies the agreement of counter-clockwise order $A_1,A_0,A_2$.

Now, by assumption in this case, circles $a,b,d$ as well as $b,c,d$ are in configuration 3. Since points $B_i$ alternate with both $A_i$ and $C_i$, by Lemma \ref{c3-eq}, one of points $B_i$ is on arc $\arck{C_2C_0C_1}$ and another on arc $\arck{A_2A_0A_1}$. 
There are two possibilities to insert points $B_i$ into the first sequence above. Suppose we have the following placement of $B_i$:
\[
C_2, B_2, C_1, A_1, B_1,A_2=C_2
\]

This would mean that the arc $\arck{B_1B_0B_2}$ is the same as $\arck{B_1A_2B_2}$, but arc $\arck{B_1B_0B_2}$ is an inner arc of 
$\CH(b,d)$, with only points $B_1,B_2$ on its border, thus, it is an inner arc of $\CH(a,b,c)$, and cannot have points on the border of $\CH(a,b,c)$, in particular, cannot have point $A_2=C_2$. 

Therefore, the only possible sequence of points is 

\[
C_2, B_1, C_1, A_1, B_2,A_2=C_2
\]
which can be written starting from $B_1$ as follows:

\[
 B_1, C_1, A_1, B_2, C_2, A_2=C_2
 \]

\textbf{\textit{Case 4:} $d$ is not tangent to $(A_cC_a)$, $(B_cC_b)$ and $(A_bB_a)$}.

Now every triple of circles: $\{a,b,d\}$, $\{b,c,d\}$ and $\{a,c,d\}$ - is in configuration 3 by Theorem \ref{NC2}, therefore, $A_i,B_i$ alternate on $d$, as well as $B_i,C_i$ and $A_i,C_i$.

Gradually increase the radius of $d$ so that $d$ remains in $\CH(a,b,c)$. At the limit of this process $d$ will touch at least one of outer borders of $\CH(a,b,c)$. Call new circle $d_1$.
Then $abc\to d_1$ is again tight. Moreover, one of Cases 1-3 will describe the configuration of $a,b,c,d_1$.

Assume wlog that $d_1$ is touching $(A_cC_a)$ and not touching other lines $(A_bB_a)$, $(B_cC_b)$. Then we can apply the result of Case 3 to $d_1$. Therefore, the sequence of points on $d_1$ is
\[
 B_1, C_1, A_1, B_2, C_2=A_2
\]

Also, focusing on subsequence of $A_i,C_i$, we have, starting from $A_1$ :
\[
A_1,A_0,A_2=C_2,C_0,C_1
\]

If we slightly reduce the radius of $d_1$, circles $a,c,d_1$ will no longer be in limit case 2, but since $abc\to d_1$ is tight, $a,c,d_1$ are in configuration 3, due to Theorem \ref{NC2}. By Lemma \ref{case4}, the sequence of $A_i,C_i$ points on $d_1$ starting from $A_1$ is
\[
A_1, C_2, A_2,C_1
\]

We need to embed this sequence into sequence of limit case 2
\[
 B_1, C_1, A_1, B_2, A_2=C_2
\]

We notice that we have to place $C_2$ after $A_1$, and if we place it before $B_2$, then points $B_i,C_i$ will not alternate, which contradicts $b,c,d_1$ being in configuration 3. Therefore, the proper sequence is 
\[
 B_1, C_1, A_1, B_2, C_2, A_2
\]

If we continue reducing the radius of $d_1$ back to $d$, we remain in configuration 3 for all triples of circles. Indeed, at every moment of the process we have $abc\to d_1$ tight, and at the start of the process $d_1$ was not at the center of limit case 2 with any two other circles. Therefore, the sequence of points on $d$ will not change.

Suppose we get $d_1$ in its limit position touching $(A_bB_a)$ and $(A_cC_a)$. Then, according to the result for case 2, we get the following sequence on $d_1$:
\[
B_1,C_1,A_1=B_2,B_2,C_2,A_2=C_2
\]

When the radius of $d_1$ is slightly reduced, we get the following sequence of points $B_i,A_i$:
\[
B_1, A_1, B_2, A_2
\]
Similarly, we expect the following sequence of points $A_i,C_i$:
\[
C_1,A_1, C_2, A_2
\]

Incorporating them into original sequence in limit position of $d_1$, we get only one possibility:
\[
B_1, C_1, A_1, B_2, C_2, A_2
\]

Finally, consider the case when $d_1$ in its limit position is touching all three lines $(A_cC_a)$, $(B_cC_b)$ and $(A_bB_a)$.
Then we have configuration of $a,b,c,d_1$ in case 1, therefore, there are only three distinct touching points on $d_1$:
\[
B_1, A_1, C_2
\]
If we give them other labeling, focusing on $A_i,B_i$, then we get the same sequence as
\[
B_1, B_2=A_1, A_2
\]
Then, when the radius of $d_1$ is slightly reduced, we expect, by Lemma \ref{case4}, that the points will alternate:
\[
B_1,A_1, B_2, A_2
\]

Similarly, if we write the original sequence of points on $d_1$ in their $A_i,C_i$ labeling, we get
\[
C_1, A_1, C_2=A_2
\]
Therefore, after reducing the radius of $d_1$ we obtain
\[
C_1,A_1,C_2,A_2
\]
Finally, if we write the original sequence in its $B_i,C_i$ labeling, we will have
\[
B_1=C_1, B_2, C_2
\]
After the radius of $d_1$ is reduced it turns into
\[
B_1,C_1,B_2,C_2
\]

There is only one circular sequence of all points $A_i,B_i,C_i$ that honors three established subsequences:
\[
B_1, C_1, A_1, B_2, C_2, A_2
\]
This finishes the analysis of Case 4.
\end{proof}

\section{Point Order Theorem in action}

In this section we find several applications of Point Order Theorem, which will generate several families of non-representable convex geometries.

\subsection{Separation property}

\begin{theorem}\label{G14}
Let $a,b,c,d,e$ be circles on the plane. If $acd \to e$ and $bcd \to e$ are both tight, then it is not possible to also have the implication $ab \to e$.
\end{theorem}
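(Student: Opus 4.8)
The plan is to argue by contradiction: assume that, in addition to the tight implications $acd\to e$ and $bcd\to e$, we also have $ab\to e$, and derive a geometric impossibility. First I would record the consequences of tightness. Since $acd\to e$ is tight, none of $a,c,d$ alone implies $e$ and neither does any pair $ac,ad,cd$; in particular $a\to e$ is impossible (it would force $ac\to e$), and likewise $b\to e$, $c\to e$, $d\to e$ all fail. Assuming no circle is contained in another (the degenerate containments yield $x\to e$ or $e\to x$ and are disposed of directly), Lemma \ref{3 circles} then says that the hypothesis $ab\to e$ forces the triple $a,b,e$ into configuration 1 or configuration 2 with $e$ at the center, and these are the only two cases I must exclude.

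The key structural input comes from the Triangle Property. Applying Lemma \ref{triangle} to each tight implication, the center $E$ of $e$ lies in the interior of $\triangle ACD$ and in the interior of $\triangle BCD$. By Lemma \ref{OpAngle}, interiority of $E$ in $\triangle ACD$ places $A$ in the angle vertical to $\angle CED$, and interiority of $E$ in $\triangle BCD$ places $B$ in the \emph{same} vertical angle. Thus $A$ and $B$ lie in a common cone $K$ with apex $E$, of angular width $\angle CED<\pi$, opening away from $c$ and $d$. This single fact excludes configuration 2 with $e$ at the center: there $A$ and $B$ lie on opposite sides of $e$ (with $E$ essentially between them on $(AB)$), so $\vec{EA}$ and $\vec{EB}$ point into opposite half-planes and cannot both lie in a cone of width less than $\pi$.

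It remains to exclude configuration 1 with $e$ at the center, where $\CH(a,b,e)=\CH(a,b)$ so that $Circ(e)$ lies in the interior of $\CH(a,b)$. Using the tangent-point labels of Section \ref{AreaQ} on $Circ(e)$, Lemma \ref{c1-eq} (with $e$ in the role of the central circle) gives $\arck{A_1 A_\infty A_2}\subseteq\arck{B_1 B_0 B_2}$: the far arc of $a$ is contained in the near arc of $b$. Because $A\in K$, the far point $A_\infty$ and the whole far arc $\arck{A_1 A_\infty A_2}$ sit on the side of $Circ(e)$ pointing toward $c,d$, opposite $K$; because $B\in K$, the near arc $\arck{B_1 B_0 B_2}$ is centered at $B_0$, pointing into $K$. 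For the near arc of $b$ to swallow an arc lying on the opposite side, it must run past the equator of $Circ(e)$ cut by $(EB)$, driving the touching points $B_1,B_2$ into the hemisphere around $B_\infty$; but this occurs only in the limit $\tilde e\subseteq\tilde b$, i.e. $b\to e$, contradicting the tightness of $bcd\to e$. The roles of $a$ and $b$ are symmetric, so configuration 1 is impossible as well, and no representation can carry $ab\to e$.

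The main obstacle is the last paragraph: converting the qualitative statement ``$a$ and $b$ lie in the cone $K$'' into the precise claim that the $c,d$-facing arc of $Circ(e)$ cannot be covered by $\CH(a,b)$ unless one of $a,b$ already contains $e$. Making this rigorous requires tracking, via Lemmas \ref{Q-one} and \ref{c1-eq}--\ref{c3-eq} together with the Point Order Theorem \ref{POrd}, exactly which arcs of $Circ(e)$ are pinned to the boundary of $\CH(a,c,d)$ and $\CH(b,c,d)$ by $c$ and $d$, and checking that these are the very arcs $\CH(a,b)$ leaves exposed. The limit case 2 sub-configurations of the triples $\{a,c,e\}$, $\{b,c,e\}$, $\{c,d,e\}$ (where $e$ sits at a center) do not fall directly under Lemma \ref{c1-eq} and should be handled separately by the radius-reduction device used in Case 4 of the proof of Theorem \ref{POrd}.
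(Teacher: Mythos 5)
Your setup reproduces the paper's first half faithfully: the point case $e=\{E\}$, and the use of the Triangle Property (Lemma \ref{triangle}) with Lemma \ref{OpAngle} to place both $A$ and $B$ in the vertical angle $K$ to $\angle CED$, are exactly the paper's opening moves. (A minor slip: $ab\to e$ means precisely $e\in \ch_c(a,b)$, i.e.\ configuration 1 with $e$ central; configuration 2 with $e$ at the center does \emph{not} give $e\in \ch_c(a,b)$, so that extra case is vacuous rather than something to exclude.) The genuine gap is in your exclusion of configuration 1. The claim that the near arc $\arck{B_1B_0B_2}$ can run past the equator of $Circ(e)$ cut by the perpendicular to $(EB)$ ``only in the limit $\tilde e\subseteq\tilde b$'' is false: the touching points $B_1,B_2$ pass that equator as soon as $r_b>r_e$, and they travel continuously toward $B_\infty$, reaching it only at actual containment. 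Hence the near arc of $b$ can contain $A_\infty$, which sits at angular distance $\pi-\angle AEB<\pi$ from $B_0$, long before $b$ contains $e$. Concretely, let $a,b$ be two large congruent overlapping circles centered at $(\pm D,0)$ with $D<r$, and let $e$ be a small circle nestled in the crevice just below the upper intersection point of their boundaries: then $\tilde e\subseteq \CH(a,b)$, neither $a$ nor $b$ contains $e$, and both $A$ and $B$ are seen from $E$ inside a downward cone of width $2\arctan(D/h)<\pi$. So ``$A,B$ in a common cone of width less than $\pi$'' is fully compatible with configuration 1 with $e$ central, and your cone argument cannot close this case.

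What closes it in the paper is exactly the bookkeeping you defer to your final paragraph as the ``main obstacle'': two applications of the Point Order Theorem \ref{POrd}. Since $A$ and $B$ lie in the same vertical angle, the counterclockwise orientations of $A_0,D_0,C_0$ and $B_0,D_0,C_0$ agree, so the tight implications $bcd\to e$ and $acd\to e$ give the circular orders $B_1,C_1,D_1,B_2,C_2,D_2$ and $A_1,C_1,D_1,A_2,C_2,D_2$ on $Circ(e)$. Assuming $ab\to e$, Lemma \ref{c1-eq} forces the $A_i$ to follow the $B_i$ in the nested fashion, and the only circular order compatible with all three constraints is $A_1,B_1,C_1,D_1,B_2,A_2,C_2,D_2$ (or the same with $a$ and $b$ interchanged). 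This yields $\arck{B_1B_0B_2}\subseteq \arck{A_1A_0A_2}$, so the complementary arc $\arck{B_1B_\infty B_2}$ cannot also lie in $\arck{A_1A_0A_2}$ unless that arc is all of $Circ(e)$, i.e.\ $a\to e$, which tightness of $acd\to e$ forbids --- contradicting Lemma \ref{c1-eq}. It is the interleaving of $A_i,B_i$ with $C_i,D_i$ supplied by Theorem \ref{POrd}, not the cone constraint, that carries the theorem; without it the hypotheses are consistent with $ab\to e$, as the crevice example shows. (Your worry about limit case 2 for the triples $\{a,c,e\}$, $\{b,c,e\}$, $\{c,d,e\}$ is also unnecessary: the paper's argument invokes only Theorem \ref{POrd}, whose proof already absorbs those limit positions, together with Lemma \ref{c1-eq} for the single triple $a,b,e$.)
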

\begin{proof}
First, let us consider the case where $e=\{E\}$. Since $acd \to e$ and $bcd \to e$ are both tight, by the triangle property (Lemma \ref{triangle}) the center of circle $e$ must lie in the interior of the triangles formed by centers $B, C, D$ and $A, C, D$. Then, by lemma \ref{OpAngle}, both $B$ and $A$ must be in the opposite angle to $\angle DEC$. Thus, if any point of $a$ crosses the line $(DE)$, then $E \in \CH(a,D) \subseteq \CH(a,d)$, violating the tightness of the implication $acd \to e$. (See figure \ref{G14pt} below.) 

Similarly, if any point of $a$ crosses the line $(CE)$, then $E \in \CH(a,c)$, so $a$, and by the same argument $b$, must be strictly contained in the opposite angle to $\angle DEC$, and therefore their convex hull cannot contain $e$ and the hypothesis holds.

\begin{figure}[H]
    \centering
        \begin{tikzpicture}
            \draw[<->] (-1,-1) -- (2,1);
            \filldraw [gray] (-1,-1) circle[radius=0.075] node[anchor=north] {$C$};
            \draw[<->] (1,-1) -- (-1,1);
            \filldraw [gray] (1,-1) circle[radius=0.075] node[anchor=north] {$D$};
            \filldraw [gray] (1/5,-1/5) circle[radius=0.075] node[anchor=north] {$E$};
            \draw[opacity=0.2,fill=gray] (2,1) -- (1/5,-1/5) -- (-1,1) -- (2,1);
            \draw [dashed] (-0.2, 0.4) circle[radius=0.4];
            \filldraw [gray] (-0.2, 0.4) circle[radius=0.075] node[anchor=south] {$A$};
        
        \end{tikzpicture}
    \caption{Possible locations for $A,B$ shaded}
    \label{G14pt}
\end{figure}
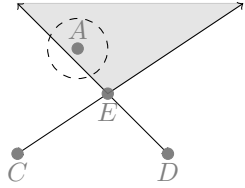

Now let us consider the case when $e$ has a positive radius. Wlog, let us assume the counterclockwise orientation of $B_0, D_0, C_0$ and associated labeling around circle $e$ as in Theorem \ref{POrd}, with circles $d$ and $e$ playing the roles of circles $c$ and $d$ respectively. Specifically, note that we have labeled such that $B_1, B_0, B_2$ is the counterclockwise circular order of these points around $e$. Since $bcd \to e$ is tight, this Lemma then tells us that the counterclockwise point order on $e$ beginning at $B_1$ is  $B_1, C_1, D_1, B_2, C_2, D_2$.

Now, consider the placement of $A$, the center of circle $a$. Again, since $acd \to e$ and $bcd \to e$ are both tight, by the triangle property the center of circle $e$ must lie in the interior of the triangles formed by centers $B, C, D$ and $A, C, D$, and by lemma \ref{OpAngle}, both $B$ and $A$ must be in the opposite angle to $\angle DEC$. 

\begin{comment}
\begin{figure}[H]
    \centering
        \begin{tikzpicture}
            \draw[<->] (-3,0) -- (3,0);
            \filldraw [gray] (-2,0) circle[radius=0.075] node[anchor=north] {$D$};
            \filldraw [gray] (2,0) circle[radius=0.075] node[anchor=north] {$C$};
            \filldraw [gray] (2.5,2) circle[radius=0.075] node[anchor=south] {$E$};
            \draw[<->] (-2,0) -- (2.5,2) -- (2,0);
            \draw[opacity=0.2,fill=gray] (-3,0) -- (-2,0) -- (2.5,2) -- (2,0) -- (3,0) -- (3,3) -- (-3, 3) -- (-3,0);
        
        \end{tikzpicture}
    \caption{Possible locations for $A,B$ shaded}
    \label{fig:1C}
\end{figure}
\end{comment}

In particular, the rays from $E$ to $A$ and $B$ that define $A_0$ and $B_0$ are both in the opposite angle to $\angle DEC$, and so the counterclockwise orientation of $B_0, D_0, C_0$ around $e$ must be the same as the counterclockwise  orientation of $A_0, D_0, C_0$. Thus, again using the same labeling such that $A_1, A_0, A_2$ is the counterclockwise circular order of these points around $e$, by Theorem \ref{POrd} the counterclockwise point order on $e$ beginning at $A_1$ is  $A_1, C_1, D_1, A_2, C_2, D_2$.

Now, for the sake of contradiction, suppose we also have the implication $ab \to e$. Then circles $a,b,e$ are in configuration 1, with $e$ in the center, so by lemma \ref{c1-eq} $A_1, A_2$ follow $B_1, B_2$ on a walk around $e$. The only possible point orders are then $A_1, B_1, C_1, D_1, B_2, A_2, C_2, D_2$, or the symmetric order with $a$ and $b$ switched. In this case, $\overset{\frown}{B_1B_0B_2} \subseteq \overset{\frown}{A_1A_0A_2}$, so the complementary arc $\overset{\frown}{B_1B_\infty B_2} \not\subseteq \overset{\frown}{A_1A_0A_2}$ (unless $\overset{\frown}{A_1A_0A_2}$ is all of circle $e$, which we exclude since $a \to e$ contradicts $acd \to e$ being tight), which contradicts Lemma \ref{c1-eq}. Thus the implication $ab \to e$ must be absent.

\end{proof}

In the following corollary, the labels in geometry 26 have been changed to better fit the statement of theorem \ref{G14}.

\begin{corollary}\label{List4}
The following geometries, identified here by their tight implications, are not representable by circles on the plane:

Geometry 14: $acd \to e$, $bcd \to e$, and  $ab \to e$. 

Geometry 23: $acd \to e$, $bcd \to e$, $abc \to de$, and  $ab \to e$. 

Geometry 26: $acd \to be$, $bcd \to e$, and  $ab \to e$. 

Geometry 39: $acd \to e$, $bcd \to e$, and  $ab \to de$. 

Geometry 54: $acd \to e$, $bcd \to e$, and  $ab \to cde$.

\end{corollary}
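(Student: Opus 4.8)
The plan is to obtain the corollary as a direct consequence of Theorem \ref{G14}, which rules out any circle configuration simultaneously realizing the tight implications $acd \to e$, $bcd \to e$ together with $ab \to e$. For each of the five geometries I would argue by contradiction: assuming a representation by circles on the plane exists, the circles $a,b,c,d,e$ must realize, under $ch_c$, all the listed tight implications. I would then show that this forces the three hypotheses of Theorem \ref{G14}, and invoke that theorem to derive a contradiction.

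The substance of the argument is a routine, geometry-by-geometry verification of the three hypotheses. First I would confirm $e \in \varphi(acd)$ and $e \in \varphi(bcd)$: the implication $acd \to e$ is listed outright in Geometries 14, 23, 39, and 54, and $bcd \to e$ is listed in all five, while in Geometry 26 the implication $acd \to e$ follows from the stronger listed implication $acd \to be$, since $e \in \varphi(acd)$ whenever $be \subseteq \varphi(acd)$. Likewise, $ab \to e$ is listed verbatim in Geometries 14, 23, and 26, and follows from $ab \to de$ in Geometry 39 and from $ab \to cde$ in Geometry 54, in each case because $e$ sits among the consequents.

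The one point requiring care, and the \emph{main obstacle}, is tightness of $acd \to e$ and $bcd \to e$, namely that no two-element subset of $\{a,c,d\}$ or $\{b,c,d\}$ already forces $e$. Since the displayed implications form the complete tight basis of each geometry, I would settle this by computing the closures $\varphi(ac)$, $\varphi(ad)$, $\varphi(cd)$, $\varphi(bc)$, $\varphi(bd)$ and checking that none contains $e$; in every case no listed implication has its premise contained in one of these pairs, so each pair is already closed and omits $e$, giving tightness. This step is most delicate in Geometry 26, where the listed implication is $acd \to be$ rather than $acd \to e$: tightness of $acd \to be$ does \emph{not} by itself yield tightness of $acd \to e$, so one must independently verify that $\varphi(ac)$, $\varphi(ad)$, $\varphi(cd)$ each exclude $e$.

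With all three hypotheses of Theorem \ref{G14} verified in each geometry, the presence of $ab \to e$ contradicts the tightness of $acd \to e$ and $bcd \to e$. Hence no representation by circles on the plane can exist for any of Geometries 14, 23, 26, 39, and 54.
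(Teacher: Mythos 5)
Your proposal is correct and follows essentially the same route as the paper, whose proof of Corollary \ref{List4} is simply the one-sentence observation that the hypotheses of Theorem \ref{G14} hold in each geometry while the conclusion is incompatible with the listed implication $ab \to e$ (or its strengthenings). You merely spell out the routine closure computations the paper leaves implicit, including the correct handling of the only delicate point, namely that in Geometry 26 tightness of $acd \to e$ must be checked directly from the closures $\varphi(ac)$, $\varphi(ad)$, $\varphi(cd)$ rather than inferred from tightness of $acd \to be$.
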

\begin{proof}
In all these geometries the assumptions of Theorem \ref{G14} hold, but the conclusion is not compatible with the given implications.  
\end{proof}

\subsection{Nested triangles}

Finally, we will make use of technical work of the previous section. The first statement is a key Lemma that deals with the tight implication $abc\to d$. We could think of the combination of ``nested triangles" formed by $d,a,b$ and $d,b,c$ (or $d,a,c$) within the larger ``triangle" formed by $a,b,c$.

 \begin{figure}[H]
\centering
\vspace{1.5mm}
 \includegraphics[scale = 0.55]{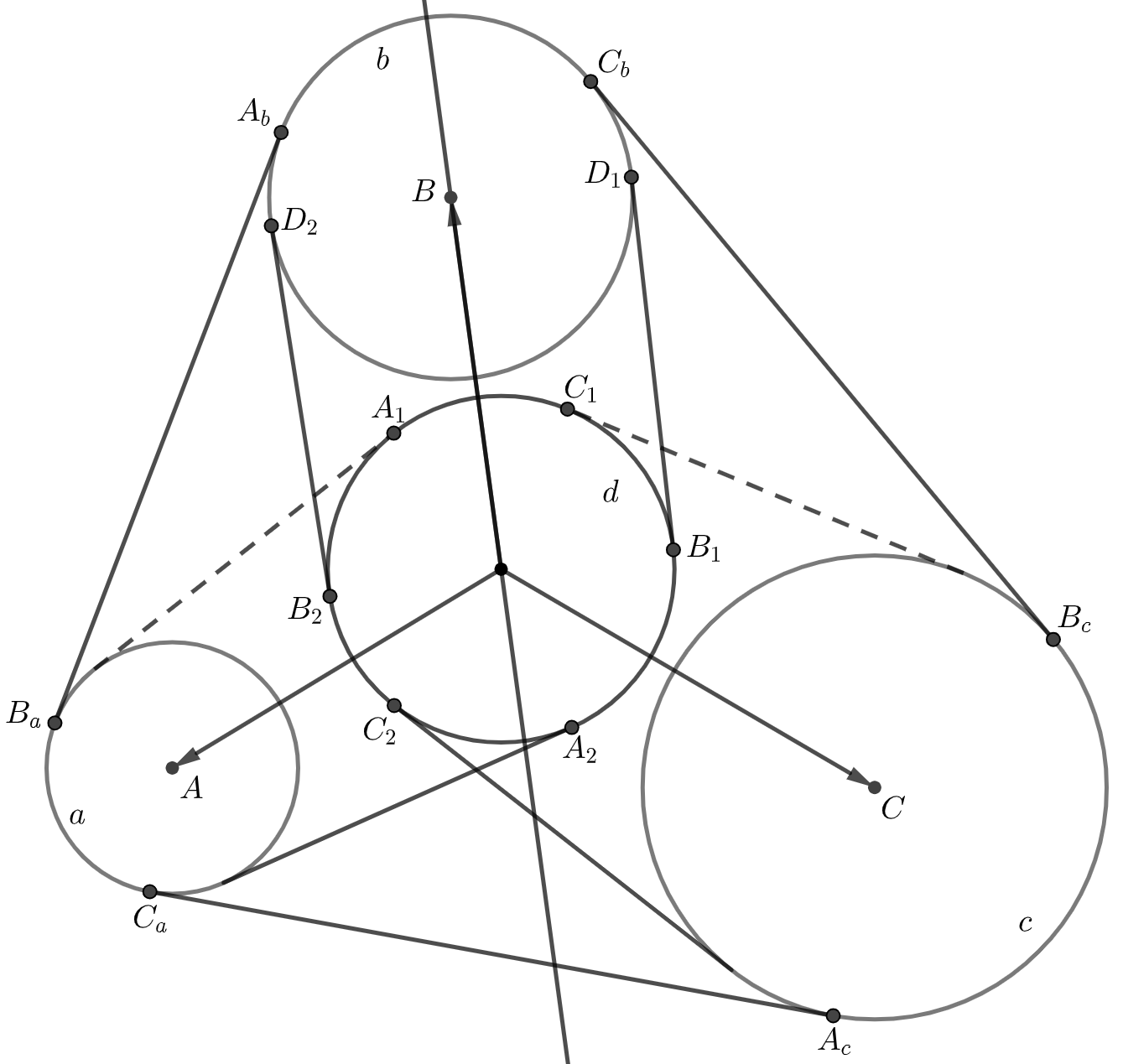}
 \caption{Lemma \ref{Nested}}
  \label{N}
\end{figure}

\begin{lemma}\label{Nested}
If $abc \to d$ is tight, then $\CH(a,b,d)\cap \CH(b,c,d) = \CH(b,d)\cup Q_d(a,c)$ in case $r_d>0$, and $\CH(a,b,d)\cap \CH(b,c,d) = \CH(b,d)$ when $r_d=0$.
\end{lemma}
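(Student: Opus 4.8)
The plan is to prove the two inclusions separately; the inclusion $\supseteq$ is immediate and the real content is the reverse inclusion, which I would organize through the radial (star-shaped) structure of the hulls about the center $D$ of $d$. For $\supseteq$: $\CH(b,d)$ lies in both $\CH(a,b,d)$ and $\CH(b,c,d)$ since $b,d$ generate each, and $Q_d(a,c)=(\CH(a,d)\cap\CH(c,d))\setminus\tilde d\subseteq \CH(a,b,d)\cap\CH(b,c,d)$ because $\CH(a,d)\subseteq\CH(a,b,d)$ and $\CH(c,d)\subseteq\CH(b,c,d)$. For the reverse inclusion, assume first $r_d>0$, so $D$ lies in the interior of every hull in sight and each such hull is star-shaped about $D$ with a well-defined radial function $\rho_K(\theta)$; intersecting hulls corresponds to taking the pointwise minimum of radial functions. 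For a point $p\notin\tilde d$ let its \emph{foot} $\pi(p)$ be the unique point where $[Dp]$ meets $Circ(d)$. Since $abc\to d$ is tight, Theorem \ref{POrd} gives the counterclockwise order $B_1,C_1,A_1,B_2,C_2,A_2$ of the six tangency points on $Circ(d)$, which I use throughout.

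Next I would locate the admissible feet. By Lemma \ref{Q-one} the cap $\CH(x,d)\setminus\tilde d$ is attached to $Circ(d)$ exactly along the near arc $\arck{X_1X_0X_2}$, so $\CH(b,d)$ sits over $\arck{B_1B_0B_2}$, $\CH(a,d)$ over $\arck{A_1A_0A_2}$, and $\CH(c,d)$ over $\arck{C_1C_0C_2}$. The triples $a,b,d$ and $b,c,d$ are each in configuration $3$: their tangency points alternate in the order above, so Lemmas \ref{3 circles} and \ref{c3-eq} apply. Lemma \ref{c3-eq}(b) then identifies the border arc of each ternary hull on $d$, namely $\arck{A_1A_\infty A_2}\cap\arck{B_1B_\infty B_2}$ for $\CH(a,b,d)$, which is the arc $\arck{A_2B_1}$ carrying no other tangency point, and $\arck{B_1B_\infty B_2}\cap\arck{C_1C_\infty C_2}=\arck{B_2C_2}$ for $\CH(b,c,d)$. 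A point $p$ of a hull $K\supseteq\tilde d$ lying outside $\tilde d$ must have $\pi(p)$ off the border arc of $K$ (else the ray $[Dp)$ would leave $K$ at $Circ(d)$). Hence every $p\in(\CH(a,b,d)\cap\CH(b,c,d))\setminus\tilde d$ has foot outside both $\arck{A_2B_1}$ and $\arck{B_2C_2}$, i.e.\ in $\arck{B_1B_0B_2}$ or in the arc $\arck{C_2A_2}$; note $\arck{C_2A_2}=\arck{A_1A_0A_2}\cap\arck{C_1C_0C_2}$ is precisely where $Q_d(a,c)$ is attached.

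Finally I would show that each surviving sector is governed by a single binary hull. The mechanism is that the two ``bridge'' pieces lie on opposite sides: the $a$--$b$ bridge of $\CH(a,b,d)$ projects into the arcs adjacent to $\arck{A_2B_1}$ on the $A$-side, while the $b$--$c$ bridge of $\CH(b,c,d)$ projects into the arcs on the $C$-side, so in every radial direction with foot in $\arck{B_1B_0B_2}$ at least one of the two ternary hulls already has its boundary on $\CH(b,d)$; the minimum of the two radial functions then collapses to $\rho_{\CH(b,d)}$, forcing $p\in\CH(b,d)$. Symmetrically, over $\arck{C_2A_2}$ the circle $b$ and both bridges are confined to the complementary arcs, so $\CH(a,b,d)$ coincides with $\CH(a,d)$ and $\CH(b,c,d)$ with $\CH(c,d)$ there, giving $p\in\CH(a,d)\cap\CH(c,d)$, i.e.\ $p\in Q_d(a,c)$. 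I expect the main obstacle to be exactly making these ``governed by one binary hull over a prescribed arc'' statements precise, that is, bounding the projection ranges of the bridge regions; I would argue this directly from the outer common tangents, using the point order of Theorem \ref{POrd} to guarantee that each bridge's tangent segments project onto $Circ(d)$ only inside the claimed arcs.

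Two degenerate cases need separate remarks. When $a,c,d$ fall into limit case $2$ with $d$ central (the other alternative permitted by Theorem \ref{NC2}), the arc $\arck{C_2A_2}$ degenerates to a point and $Q_d(a,c)=\emptyset$ by Lemma \ref{c2-eq}(d), so the claim reduces to $\CH(a,b,d)\cap\CH(b,c,d)=\CH(b,d)$ and the argument of the previous paragraph still applies over $\arck{B_1B_0B_2}$. When $r_d=0$ the arc machinery is unavailable, so instead I would note that $\CH(x,D)$ is the cone from $D$ subtended by $x$; since $D$ lies in the interior of $\triangle ABC$ in the opposite-angle position of Lemmas \ref{triangle} and \ref{OpAngle}, the cones of $a$ and of $c$ meet only in $\{D\}$, whence $Q_D(a,c)=\emptyset$, and the same wedge comparison between the two ternary cones forces $\CH(a,b,D)\cap\CH(b,c,D)=\CH(b,D)$.
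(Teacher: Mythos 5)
Your skeleton for the main case $r_d>0$ matches the paper's proof: the $\supseteq$ inclusion, the use of Theorem \ref{POrd} to fix the cyclic order $B_1,C_1,A_1,B_2,C_2,A_2$, the identification of $\arck{A_2B_1}$ and $\arck{B_2C_2}$ as border arcs of $\CH(a,b,d)$ and $\CH(b,c,d)$ via Lemma \ref{c3-eq}, the attachment arc $\arck{C_2A_2}$ for $Q_d(a,c)$, and the dichotomy of feet into $\arck{B_1B_0B_2}$ versus $\arck{C_2A_2}$ are all correct, and the radial-function formalization is sound. But the step you yourself flag as the ``main obstacle'' --- that in every direction with foot in $\arck{B_1B_0B_2}$ at least one of the two ternary hulls has its boundary on $\partial\CH(b,d)$ --- is the actual content of the lemma, and you leave it as an expectation rather than an argument. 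The paper closes exactly this hole by passing to circle $b$: it introduces the touching points $D_1,D_2$ of the common $b,d$ tangents and the touching points $C_b, A_b$ of the outer $b,c$ and $a,b$ tangents, proves the cyclic order $D_1, C_b, A_b, D_2$ on $b$, and then covers the entire $b$-facing portion of $\partial\CH(b,d)$, namely $[B_1D_1]\cup\arck{D_1C_bA_b}\cup\arck{A_bD_2}\cup[D_2B_2]$, by the borders of $\CH(a,b,d)$ (first two pieces) and $\CH(b,c,d)$ (last two). Without some version of this order statement your claim that the two ``bridge'' regions project into disjoint, complementary ranges of directions is unsubstantiated; as it stands the central paragraph is a correct plan, not a proof. (You should also note that by Theorem \ref{NC2} the triples $a,b,d$ and $b,c,d$ may be in limit case 2 with $d$ central, not only configuration 3; this only merges arcs, but your text asserts configuration 3 outright.)

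The $r_d=0$ case contains a genuine error of justification. Lemmas \ref{triangle} and \ref{OpAngle} constrain only the \emph{centers} $A,B,C$; the interior position of $D$ in $\triangle ABC$ does not prevent the angular sectors subtended by the disks $a$ and $c$ at $D$ from overlapping --- this can happen when radii are large, or when one disk lies radially behind the other along a ray from $D$ (disjoint disks with $D\notin\CH(a,c)$, yet $\CH(a,D)\cap\CH(c,D)\supsetneq\{D\}$). So ``the cones of $a$ and of $c$ meet only in $\{D\}$'' does not follow from the lemmas you cite; it requires tightness-specific input. The paper instead reduces the point case to the case $r_d>0$ by inflating $D$ to a small circle and taking a limit, and then kills $Q_{\{D\}}(a,c)$ using Lemma \ref{Overlap} (tightness forbids $a,c$ from overlapping); the occlusion scenario is excluded because for small $r_d>0$ it would put $a,c,d$ in configuration 1 or 2 with $a$ or $c$ central, contradicting Theorem \ref{NC2} and Lemma \ref{NC1}. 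Your final ``wedge comparison'' giving $\CH(a,b,D)\cap\CH(b,c,D)=\CH(b,D)$ is again asserted without the coverage argument. I recommend either adopting the paper's limit reduction for $r_d=0$, or deriving cone-disjointness honestly from tightness along the lines above, and in the main case supplying the order of $D_1,C_b,A_b,D_2$ on $b$ from the tangent structure before invoking your radial minimum.
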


\begin{proof} 

Let us assume that we proved the case $r_d>0$, and consider $r_d=0$.

Since the right side of the equality for that case is always contained in the left side, we need to show that the left side is contained in the right side. 

According to assumption, we have for  
$r_d>0$:
\[
\CH(a,b,\{D\})\cap \CH(b,c,\{D\}) \subseteq \CH(a,b,d)\cap \CH(b,c,d) = \CH(b,d)\cup Q_d(a,c)
\]

Now we can take the limit when $r_d$ approaches $0$, and the right side will approach $\CH(b, \{D\}) \cup Q_{\{D\}}(a,c)$.

It remains to notice that $a,c$ cannot overlap, when $d=\{D\}$ and $abc\to d$ is tight. Indeed, by Lemma \ref{Overlap}, if $a,c$ overlap, then $D$ is in one of double hulls $\CH(b,c), \CH (a,b), \CH(a,c)$, thus, $abc\to d$ is not tight. 

Since $a,c$ do not overlap,  $Q_{\{D\}}(a,c)=\emptyset$. Therefore, $\CH(a,b,d)\cap \CH(b,c,d) = \CH(b,d)$.

Thus, it remains to prove the case when $d$ has a positive radius.

Note that it follows from the definition of $Q_d(ac)=\CH(a,d)\cap \CH(c,d)$ that $\CH(b,d)\cup Q_d(a,c)\subseteq \CH(a,b,d)\cap \CH(c,b,d)$. Moreover, set $\CH(a,b,d)\cap \CH(b,c,d)$ is convex; therefore, it will contain an area around every point not in $\CH(b,d)$ with points arbitrarily close to some portion of the border of $\CH(b,d)$. We will show that the only portion of the border of $\CH(b,d)$ for which $(\CH(a,b,d)\cap \CH(b,c,d))\setminus \CH(b,d)$ contains arbitrarily close points is the arc of attachment of $Q_d(a,c)$.

We would think of orientation of $a,b,c,d$ so that we would take advantage of circular order of touching points on circle $d$. Draw line $(BC)$ through centers of $b,c$, and then name touching points on $b$ as $B_1,B_2$ and point of intersection of $[DB)$ with $Circ(d)$ as $B_0$ so that moving from $B_1$ to $B_0$ to $B_2$ would be in counter clockwise order. By assumption, points $B_0,A_0,C_0$ appear in counter clockwise order. Besides, by Triangle Propoerty (Lemma \ref{triangle}), $D$ must be inside $\triangle ABC$, therefore, $A,C$ are separated by $(BD)$, and, thus, $C$ is in the same semiplane with respect to $(BD)$ as point $B_1$, and $B_2$ and $A$ are in the opposite semiplane. See Figure \ref{N} for illustration. 

By Theorem \ref{POrd}, we have the following point order on $d$: 
\[
B_1,C_1,A_1,B_2,C_2,A_2,B_1. 
\]

Note that $C_0$ is located on arc $\arck{C_2B_1C_1}$ because of the labelling assumption which states the counter clockwise points on $d$ from $\CH(c,d)$ is $\overset{\frown}{C_2 C_0 C_1}$. Thus, $B_2$ is contained on the complement arc $\overset{\frown}{C_1 C_{\infty} C_2}$. The intersection of $\arck{C_1 C_{\infty} C_2}$ and $\arck{B_2 B_{\infty} B_1}$ gives arc $\arck{B_2 C_2}$.

Furthermore, $A_0$ is located on arc $\arck{A_1B_2A_2}$, because of the labelling assumption, so $\arck{A_1B_2A_2} = \arck{A_1A_0A_2}$. Thus, $B_1$ is on arc $\arck{A_2 A_{\infty}A_1}$. By the labelling assumption and Theorem \ref{POrd}, arc $\arck{B_2 C_2 A_2 B_1} = \arck{B_2B_{\infty}B_1}$, so the intersection of $\arck{B_2B_{\infty}B_1}$ and $\arck{A_2A_{\infty}A_1}$ gives arc $\arck{A_2B_1}$.

Finally, $A_1$ is on arc $\arck{C_1 C_{\infty} C_2}$ and $C_1$ is on $\arck{A_2 A_{\infty} A_1}$, so $\arck{C_1 A_1}$ is a border arc of $\CH(a,c,d)$ on $d$. 

By Lemma \ref{c3-eq}, the following arcs on circle $d$ are border arcs of corresponding convex hulls:
\begin{itemize}
    \item   $\arck{B_2 C_2}$ is a border arc of $\CH(b,c,d)$; 
    \item $\arck{A_2 B_1}$ is a border arc of $\CH(a,b,d)$; 
    \item $\arck{C_1 A_1}$ is a border arc of $\CH(a,c,d)$.
\end{itemize}

 By Lemma \ref{c3-eq}, $\arck{C_2 A_2}$ is where $Q_d(ac)$ is attached because $\arck{C_1 A_1}$ is a border arc of $\CH(a,c,d)$. 
 
 Consider now the touching points on $b$. Denote by $D_1, D_2$ the touching points of the common tangents with $d$, so that $d,b$ are connected by common tangents $[B_1D_1]$ and $[B_2,D_2]$. Note that $D_1$ is in the same semiplane with respect to $(BD)$ as point $B_1$, and $D_2$ is in the opposite semiplane.

Denote by $C_b$ the touching point of the outer tangent between $b,c$, and $A_b$ the touching point of the outer tangent between $b,a$. 
Then moving around $b$ from $D_1$ to $D_2$, we will meet $C_b$, then $A_b$. Note that arc $\arck{C_bA_b}$ on $b$ is part of the border of $\CH(a,b,c)$, while arc $\arck{D_1C_bA_b}$ is part of the border of $\CH(a,b,d)$, and $\arck{C_bA_bD_2}$ is a part of the border of $\CH(c,b,d)$.

Here is the sequence of points on the border of $\CH(b,d)$, when walking around this binary hull:
\[
B_1,D_1,C_b,A_b,D_2,B_2,C_2,A_2,B_1,
\]
 and we summarize what we established earlier:
\begin{itemize}
    \item segment $[B_1D_1]$ is on the border of $\CH(a,b,d)$;
    \item arc $\arck{D_1C_bA_b}$ of $b$ is on the border of $\CH(a,b,d)$;
    \item arc $\arck{A_bD_2}$ of $b$ is on the border of $\CH(b,c,d)$;
    \item segment $[D_2B_2]$ is on the border of $\CH(b,c,d)$;
    \item arc $\arck{B_2C_2}$ on $d$ is a part of the border of $\CH(b,c,d)$;
    \item arc $\arck{C_2A_2}$ of $d$ is where $Q_d(a,c)$ is attached;
    \item arc $\arck{A_2B_1}$ of $d$ is a part of the border of $\CH(a,b,d)$.
\end{itemize}  

Therefore, there are no other points in $\CH(a,b,d)\cap \CH(b,c,d)$ than those in $\CH(b,d)\cup Q_d(a,c)$.
\end{proof}

Note. Since $Q_d(a,c)=\CH(a,d)\cap\CH(c,d)\setminus \tilde{d}$ and $\tilde{d}\subseteq \CH(b,d)$, we can rewrite the conclusion of Lemma \ref{Nested} as follows:
\[
\CH(a,b,d)\cap \CH(b,c,d) = \CH(b,d)\cup (\CH(a,d)\cap\CH(c,d))=
\]
\[
= (\CH(b,d)\cup \CH(a,d))\cap (\CH(b,d)\cup\CH(c,d))
\]
The left side of equality involves ``triple" convex hulls, while the right side is $\cup,\cap$-combination of ``double" convex hulls. Remarkably, the only assumption is the tight implication $abc\to d$.

\begin{theorem}\label{NT}
Assume that $abc\to d$ is a tight implication.
Suppose that $\tilde{e} \subseteq \CH(a,b,d) \cap \CH(c,b,d)$, but $\tilde{e}\not \subseteq \CH(b,d)$. Then $\tilde{e}\subseteq \CH(a,d)\cap \CH(c,d)$.
\end{theorem}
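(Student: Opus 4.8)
The plan is to derive the conclusion almost immediately from Lemma \ref{Nested}, treating Theorem \ref{NT} as a clean corollary rather than a statement requiring fresh geometric analysis. The key observation is that the hypothesis $abc\to d$ tight is exactly the hypothesis of Lemma \ref{Nested}, so I may substitute its conclusion directly.

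\textbf{First}, I would split into the two cases of Lemma \ref{Nested} according to whether $r_d>0$ or $r_d=0$. In the generic case $r_d>0$, Lemma \ref{Nested} gives the identity
\[
\CH(a,b,d)\cap \CH(c,b,d) = \CH(b,d)\cup Q_d(a,c).
\]
The hypothesis $\tilde e\subseteq \CH(a,b,d)\cap\CH(c,b,d)$ then says every point of $\tilde e$ lies in $\CH(b,d)\cup Q_d(a,c)$. The remaining hypothesis $\tilde e\not\subseteq \CH(b,d)$ guarantees that $\tilde e$ is \emph{not} entirely swallowed by $\CH(b,d)$, so $\tilde e$ must meet $Q_d(a,c)$. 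The plan is then to use the fact that $\tilde e$ is connected (indeed convex, being a disk) together with the fact, from the note following Lemma \ref{Nested}, that $Q_d(a,c)=(\CH(a,d)\cap\CH(c,d))\setminus\tilde d$ is attached to $\CH(b,d)$ only along the single arc $\arck{C_2A_2}$ of $Circ(d)$. Since $\tilde d\subseteq\CH(b,d)$, I can rewrite $\CH(b,d)\cup Q_d(a,c)=(\CH(a,d)\cup\CH(b,d))\cap(\CH(c,d)\cup\CH(b,d))$, and the goal $\tilde e\subseteq \CH(a,d)\cap\CH(c,d)$ should follow by arguing that a disk $\tilde e$ lying in this union, but not inside $\CH(b,d)$, cannot poke out of $\CH(a,d)\cap\CH(c,d)$ on the $\CH(b,d)$ side without either staying inside $\CH(b,d)$ (excluded) or crossing a border arc of one of the triple hulls.

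\textbf{The case $r_d=0$} is degenerate: Lemma \ref{Nested} then gives $\CH(a,b,d)\cap\CH(c,b,d)=\CH(b,d)$, so the hypothesis $\tilde e\subseteq\CH(a,b,d)\cap\CH(c,b,d)$ forces $\tilde e\subseteq\CH(b,d)$, directly contradicting $\tilde e\not\subseteq\CH(b,d)$. Hence this case is vacuous and the statement holds trivially, which I would note in a single sentence.

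\textbf{The main obstacle} I anticipate is the connectivity/boundary argument in the $r_d>0$ case: showing rigorously that a convex disk $\tilde e$ contained in $\CH(b,d)\cup Q_d(a,c)$ and not contained in $\CH(b,d)$ must lie in $\CH(a,d)\cap\CH(c,d)$. The subtlety is that $\tilde e$ could straddle the arc of attachment $\arck{C_2A_2}$, having part inside $\CH(b,d)$ and part inside $Q_d(a,c)$. I would handle this by appealing to the detailed border description at the end of the proof of Lemma \ref{Nested}: the only portion of $\partial\CH(b,d)$ along which points of $(\CH(a,b,d)\cap\CH(c,b,d))\setminus\CH(b,d)$ accumulate is precisely the arc $\arck{C_2A_2}$ where $Q_d(a,c)$ is attached. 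Consequently any point of $\tilde e$ outside $\CH(b,d)$ lies in $Q_d(a,c)=\CH(a,d)\cap\CH(c,d)\setminus\tilde d\subseteq\CH(a,d)\cap\CH(c,d)$, while any point of $\tilde e$ inside $\CH(b,d)$ but near this arc also lies in $\CH(a,d)\cap\CH(c,d)$ since $\tilde d$ and a neighborhood of the attachment arc belong to both $\CH(a,d)$ and $\CH(c,d)$. Combining, every point of $\tilde e$ lands in $\CH(a,d)\cap\CH(c,d)$, which is the claim. The real work is making this ``accumulation only along one arc'' statement do the heavy lifting cleanly rather than re-deriving the arc combinatorics from scratch.
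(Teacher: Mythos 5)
Your first half matches the paper: Theorem \ref{NT} is indeed proved as a near-corollary of Lemma \ref{Nested}, the case $r_d=0$ is dismissed exactly as you say (then $Q_d(a,c)=\emptyset$ and the hypotheses are contradictory), and in the case $r_d>0$ one reduces to showing that a circle $e$ with $\tilde e\subseteq \CH(b,d)\cup Q_d(a,c)$ and a point in $Q_d(a,c)$ satisfies $\tilde e\subseteq \tilde d\cup Q_d(a,c)$, which immediately gives $\tilde e\subseteq\CH(a,d)\cap\CH(c,d)$.

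The gap is in how you propose to prove that last containment. Your argument handles points of $\tilde e$ outside $\CH(b,d)$ (these lie in $Q_d(a,c)$, fine) and points of $\tilde e$ ``inside $\CH(b,d)$ but near the attachment arc,'' but it never excludes points of $\tilde e$ inside $\CH(b,d)$ that are outside $\tilde d$ and far from the arc --- say, near $b$ --- and such points are in general \emph{not} in $\CH(a,d)\cap\CH(c,d)$. Connectivity and convexity cannot close this hole: a long thin convex set can legitimately start in $Q_d(a,c)$, cross the attachment arc $\arck{C_2A_2}$ into $\tilde d$, pass through $d$, and continue toward $b$, all the while staying inside $\CH(b,d)\cup Q_d(a,c)$; for such a set the conclusion is simply false. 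So the ``accumulation only along one arc'' fact, which constrains where the set $(\CH(a,b,d)\cap\CH(b,c,d))\setminus\CH(b,d)$ touches $\partial\CH(b,d)$, cannot do the heavy lifting you assign to it --- it says nothing about where $e$ may sit \emph{inside} $\CH(b,d)$. What is needed is a circle-specific separation fact, and this is exactly what the paper invokes: Lemma \ref{Lemma42}, applied with $d$ playing the role of the circle $p$ inscribed in the angle formed by extending the two tangents bounding $Q_d(a,c)$. Since $\tilde e$ lies in that angle and meets $w_1=Q_d(a,c)$, Lemma \ref{Lemma42} forbids $e$ from also meeting $w_2$ (the part of the angle beyond $d$), forcing $\tilde e\subseteq\tilde d\cup Q_d(a,c)$. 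Your outline never uses the fact that $e$ is a circle rather than an arbitrary convex set, which is why it cannot be completed as written; adding the Lemma \ref{Lemma42} step turns your proposal into the paper's proof.
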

\begin{proof}
By assumption, we have $\tilde{e} \subseteq \CH(a,b,d)\cap \CH(c,b,d) = \CH(b,d)\cup Q_d(a,c)$. Since $\tilde{e} \not \subseteq \CH(b,d)$, we cannot have $r_d=0$ and $Q_d(a,c) = \emptyset$, thus, we must have point $E_x$ from $e$ in $Q_d(a,c)$. 

By Lemma \ref{Lemma42},
if circle $p$ is inscribed into angle, and $w_1,w_2$ are two areas: $w_1$ outside circle $p$ and next to vertex of angle, and another outside of circle $p$ and far from the vertex - then any other circle inside that angle cannot have points in both $w_1$ and $w_2$. The version of this Lemma is also true in the case the tangents forming $Q_d(a,c)$ are parallel.

\begin{figure}[H]
    \centering
        \begin{tikzpicture}[black, thick]
        % circles (go ccw from top left)
        \node[draw,circle,minimum size=10mm,outer sep=0] (b) {};
        \node[draw,circle,xshift=0.4cm,yshift=-1.7cm,minimum size=20mm,outer sep=0] (d) {};
        \node[draw,circle,xshift=2.5cm,yshift=-4.2cm,minimum size=22mm,outer sep=0] (c) {};
        \node[draw,circle,xshift=-2.5cm,yshift=-4.2cm,minimum size=30mm,outer sep=0] (a) {};
        
        \draw (tangent cs:node=b,point={(c.east)},solution=1) -- (tangent cs:node=c,point={(b.north)},solution=2);
        \draw (tangent cs:node=b,point={(d.east)}, solution=1) -- (tangent cs:node=d,point={(b.east)}, solution=2);
        \draw (tangent
        cs:node=b,point={(d.west)}, solution=2) -- (tangent cs:node=d,point={(b.west)}, solution=1);
       
        %draw tangent between a and c 
        \draw (tangent cs:node=a,point={(c.south)},solution=2) -- (tangent cs:node=c,point={(a.south)});
        %\draw (tangent cs:node=a,point={(c.west)},solution=1) -- (tangent cs:node=c,point={(a.west)},solution=2);
        
        %draw tangents between a and b
        \draw (tangent cs:node=b,point={(a.west)},solution=2) -- (tangent cs:node=a,point={(b.west)});
        %\draw (tangent cs:node=b,point={(a.east)},solution=1) -- (tangent cs:node=a,point={(b.north)},solution=2);
        
         %draw tangents between a and d
        \draw (tangent cs:node=a,point={(d.east)},solution=2) -- (tangent cs:node=d,point={(a.south)});
         %draw tangents between c and d
         \draw (tangent cs:node=d,point={(c.south)},solution=2) -- (tangent cs:node=c,point={(d.west)});
        
        % circle labels
        \draw[black] (-0.05, 0.75)node {$b$};
         \draw[black] (0.3, -1.7)node {$d$};
         \draw[black] (0.35, -2.9)node {$w_1$};
        \draw[black] (2.5, -4.15)node {$c$};
        \draw[black] (-2.5, -4.15)node {$a$};
          
        \end{tikzpicture}
    \caption{$w_1= Q_d(ac)$}
    \label{fig:proof19}
\end{figure}
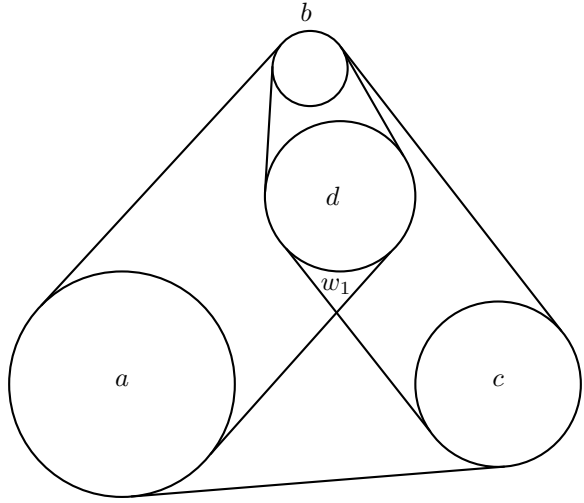

We have an application of this Lemma here: vertex of the angle is a point of intersection of tangents forming $Q_d(a,c)$, and circle $d$ plays the role of $p$. Then $Q_d(a,c)$ is contained either in $w_1$ or $w_2$ from that Lemma. Say, the case on Figure \ref{fig:proof19} is $Q_d(a,c)=w_1$. 
 
Thus, if $e$ has a point in $Q_d(a,c)$, and $e$ is contained in angle formed by extension of sides of $Q_d(a,c)$ from its vertex (which follows from $\tilde{e}\subseteq \CH(b,d)\cup Q_d(a,c)$), then $\tilde{e} \subseteq \tilde{d}\cup Q_d(a,c)$. But then $\tilde{e} \subseteq \CH(a,d)$, because $\tilde{d}\cup Q_d(a,c)\subseteq \CH(a,d)$. Similarly, $\tilde{e}\subseteq \CH(c,d)$.
\end{proof}

We will say that implication $X\to d$ on circles is loose, if $d \in ch_c(X)$.

\begin{corollary}\label{C54}
Let $a,b,c,d,e$ be circles on the plane. If $abc\to d$ is tight, $abd\to e$ and $cbd\to e$ are loose, then either $bd\to e$, or $ad \to e$ and  $cd\to e$.
\end{corollary}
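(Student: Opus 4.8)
The plan is to show that Corollary \ref{C54} is a direct consequence of the preceding Theorem \ref{NT}, applied to the hypotheses about $e$. The key observation is that ``loose'' implications $abd \to e$ and $cbd \to e$ translate directly into containment statements: $abd \to e$ loose means $\tilde{e} \subseteq \CH(a,b,d)$, and $cbd \to e$ loose means $\tilde{e} \subseteq \CH(c,b,d)$. Together these give $\tilde{e} \subseteq \CH(a,b,d) \cap \CH(c,b,d)$, which is precisely the first hypothesis of Theorem \ref{NT}. Since $abc \to d$ is assumed tight, all the assumptions of that theorem are in place except possibly the condition $\tilde{e} \not\subseteq \CH(b,d)$.

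First I would split into two cases according to whether $\tilde{e} \subseteq \CH(b,d)$ or not. In the case $\tilde{e} \subseteq \CH(b,d)$, the conclusion $bd \to e$ holds immediately by the definition of the circle closure operator $\ch_c$, giving the first alternative of the disjunction. In the complementary case, $\tilde{e} \not\subseteq \CH(b,d)$, I would invoke Theorem \ref{NT} directly: its conclusion states $\tilde{e} \subseteq \CH(a,d) \cap \CH(c,d)$. Unwinding this via the definition of $\ch_c$, the containment $\tilde{e} \subseteq \CH(a,d)$ says exactly $ad \to e$, and $\tilde{e} \subseteq \CH(c,d)$ says $cd \to e$, which yields the second alternative. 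Thus in either case one of the two stated conclusions holds.

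The only real subtlety, and the step I expect to require the most care, is the bookkeeping between the combinatorial notation for implications on circles and the geometric containment statements about convex hulls. One must be careful that ``$abd \to e$'' in the statement is being read with the fixed labeling of circles, so that the roles of $a,b,c,d$ in the hypothesis $abc \to d$ of Theorem \ref{NT} match the roles in the loose implications $abd \to e$ and $cbd \to e$; in particular $b$ and $d$ play their designated roles and $a,c$ are the ``outer'' circles, exactly as in the theorem, so no relabeling is needed. Because the loose implications are symmetric in swapping $a$ with $c$, the symmetric structure of Theorem \ref{NT}'s conclusion ($\CH(a,d)\cap\CH(c,d)$) lines up cleanly, and no additional configuration analysis is required beyond what the theorem already supplies.

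In short, the proof is essentially a translation exercise: rewrite the loose hypotheses as hull containments, feed them into Theorem \ref{NT}, and translate the conclusion back into implication language, handling the degenerate containment $\tilde{e}\subseteq \CH(b,d)$ as the separate first alternative. No new geometric argument is needed; the content is entirely carried by Lemma \ref{Nested} and Theorem \ref{NT}.
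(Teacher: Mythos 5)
Your proposal is correct and matches the paper's proof exactly: the paper also derives Corollary \ref{C54} as an immediate consequence of Theorem \ref{NT}, splitting on whether $e \in \ch_c(b,d)$ and, if not, applying the theorem to conclude $e \in \ch_c(a,d)$ and $e \in \ch_c(c,d)$. Your translation of the loose implications into hull containments is the same bookkeeping the paper does implicitly, so nothing further is needed.
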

\begin{proof}
Indeed, if $e \not \in  ch_c(b,d)$, then, by Theorem \ref{NT}, we will conclude that $e\in ch_c(a,d)$ and $e\in ch_c(c,d)$.
\end{proof}

 In the statement of the next corollary, we use the numbering of geometries given in [11] along with their implications. Elements of geometries 12, 15, 21, 23, 27, 33, and 47 have been relabeled to better fit the statement of Corollary \ref{C54}. 

\begin{corollary}\label{List2}
The following geometries, identified here by their tight implications, are not representable by circles on the plane: 

Geometry 12: $abc \to d$, $abd \to e$, and  $cbd \to e$. 

Geometry 15: $abc \to d$, $ab \to e$, and  $cbd \to e$. 

Geometry 18: $abc \to d$, $abd \to e$, $cbd \to e$, and $acd\to e$. 

Geometry 21: $abc \to d$, $abd \to e$, $cbd \to e$, and $ace\to d$.  

Geometry 23: $abc \to d$, $abd \to e$, $cbd \to e$, and $ac\to e$.  

Geometry 26: $abc \to d$, $ad \to e$, and  $cbd \to e$.   

Geometry 27: $abc \to d$, $ab \to e$, $cbd \to e$, and $ace\to d$.

Geometry 33: $abc \to d$, $ab \to e$, and $cb \to e$.

Geometry 35: $abc \to d$, $ad \to e$, $cbd \to e$, and $ab\to e$.

Geometry 45: $abc \to d$, $ad \to e$, and $cb \to e$.

Geometry 47: $abc \to d$, $ab \to e$, $cb \to e$, and $ace\to d$. 

Geometry 49: $abc \to d$, $ab \to e$, $cbd \to e$, and $ac\to e$.

Geometry 56: $abc \to d$, $ad \to e$, $cb \to e$, and $ab\to e$.

Geometry 60: $abc \to d$, $ad \to e$, $cbd \to e$, $ac\to e$, and $ab\to e$.

Geometry 70: $abc \to d$, $ab \to e$, $cb \to e$, and $ac\to e$.

Geometry 89: $abc \to d$, $a \to e$, and $cbd \to e$.

Geometry 94: $abc \to d$, $ad \to e$, $cb \to e$, $ac\to e$, and $ab\to e$.

Geometry 134: $abc \to d$, $a \to e$, and $bc \to e$.
\end{corollary}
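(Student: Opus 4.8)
The plan is to derive every entry on this list from Corollary~\ref{C54}, applied with the five circles $a,b,c,d,e$ playing exactly the roles dictated by the listed implications (the relabelings announced just before the corollary being already in force). The governing observation is that a representation by circles would realize the given closure system \emph{exactly}: it would satisfy precisely the implications that hold in the geometry and no others. Hence it suffices, for each geometry, to exhibit that the hypotheses of Corollary~\ref{C54} are met while its conclusion is incompatible with the listed implications; the existence of a circle representation would then contradict Corollary~\ref{C54}, and non-representability follows.

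For the hypotheses I would first record that in every geometry on the list $abc\to d$ is tight, as given by the enumeration of \cite{Poly20}: no proper subset of $\{a,b,c\}$ forces $d$. It then remains to check that $abd\to e$ and $cbd\to e$ are \emph{loose}, i.e. $e\in\varphi(\{a,b,d\})$ and $e\in\varphi(\{c,b,d\})$. In each case this is immediate from the tight basis: on the ``$a$-side'' one of $abd\to e$, $ad\to e$, $ab\to e$, or $a\to e$ is present, and any of these forces $e\in\varphi(\{a,b,d\})$; symmetrically, on the ``$c$-side'' one of $cbd\to e$, $cb\to e$, or $bc\to e$ is present and forces $e\in\varphi(\{c,b,d\})$. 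With all three hypotheses verified, Corollary~\ref{C54} guarantees that any circle representation must satisfy $bd\to e$, or else both $ad\to e$ and $cd\to e$.

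The remaining step is to show that neither branch of this disjunction holds in the geometry, so that the conclusion forced by Corollary~\ref{C54} cannot be realized. I would compute the relevant closures directly from the tight basis. In every geometry $e\notin\varphi(\{b,d\})$, because no listed implication has left-hand side contained in $\{b,d\}$, so $bd\to e$ fails. To defeat the second branch it suffices to falsify one conjunct, and for the same reason $e\notin\varphi(\{c,d\})$ (no listed left-hand side lies inside $\{c,d\}$), so $cd\to e$ fails. This already settles the geometries in which $ad\to e$ happens to hold, namely $26,35,45,56,60,89,94,134$ (where $ad\to e$ or $a\to e$ appears): there the conjunction ``$ad\to e$ and $cd\to e$'' fails through its $cd\to e$ conjunct, while in the remaining geometries $ad\to e$ itself already fails. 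Either way both disjuncts are false, contradicting Corollary~\ref{C54}.

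The argument is a finite, geometry-by-geometry verification rather than a single conceptual step, and I expect the principal obstacle to be bookkeeping. One must assign the roles $a,b,c,d,e$ consistently with Corollary~\ref{C54} after the relabelings, confirm that the two triple implications are genuinely loose by tracking how a smaller implication such as $a\to e$ or $cb\to e$ propagates into the triple closure, and---most delicately---rule out the \emph{disjunctive} conclusion completely. This last point is where errors are easiest: for the geometries that already contain $ad\to e$ (or $a\to e$), it is not enough that $bd\to e$ fails, since the second branch could still be satisfied; one must additionally certify that $cd\to e$ fails, so that the conjunction does not accidentally hold. Verifying uniformly that no left-hand side of the tight basis is ever contained in $\{b,d\}$ or $\{c,d\}$ is exactly what closes this gap across all eighteen cases.
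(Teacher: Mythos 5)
Your proposal is correct and is essentially the paper's own argument: the paper proves this corollary in one line by invoking Corollary~\ref{C54} and noting that its conclusion is incompatible with the listed implications, and your write-up simply makes that verification explicit (tightness of $abc\to d$, looseness of $abd\to e$ and $cbd\to e$ via the smaller listed implications, and failure of both $bd\to e$ and $cd\to e$ in every geometry since no left-hand side of the basis lies in $\{b,d\}$ or $\{c,d\}$). Your careful handling of the disjunctive conclusion for the geometries containing $ad\to e$ or $a\to e$ matches what the paper's phrase ``or make one of them not tight'' is implicitly covering.
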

\begin{proof}
In all these geometries the assumptions of Corollary \ref{C54} hold, but the conclusion either does not follow from given implications, or make one of them not tight.  
\end{proof}

\subsection{Surrounding and capturing}

The following result describes the situation of ``capturing" circle $e$ surrounded within a special set-up of 4 other circles.

\begin{theorem}\label{G74}
If $abc\to d$ is tight and $ad\to e$, $bd\to e$, $cd\to e$, then $d\to e$.
\end{theorem}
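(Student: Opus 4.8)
The plan is to combine the Nested Triangles machinery (Theorem \ref{NT} and its Corollary \ref{C54}) with the Triangle Property to close off every possibility except $d\to e$. Since $abc\to d$ is tight, and we have $ad\to e$, $bd\to e$, $cd\to e$ as hypotheses, the natural first move is to observe that these binary implications are certainly loose (indeed $e\in ch_c(a,d)$ etc.), so the looser implications $abd\to e$ and $cbd\to e$ hold as well. This puts us precisely in the setting of Corollary \ref{C54}, whose conclusion gives us that either $bd\to e$ (which we already have) or else $ad\to e$ together with $cd\to e$. Unfortunately this does not immediately yield $d\to e$; the hypotheses already supply all three binary implications, so Corollary \ref{C54} alone is not enough, and the real content must come from exploiting the geometry more directly.

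The key idea I would pursue is that $\tilde{e}$ must simultaneously sit inside each of the binary hulls $\CH(a,d)$, $\CH(b,d)$, and $\CH(c,d)$, and I want to argue this forces $\tilde{e}\subseteq\tilde{d}$, i.e.\ $d\to e$. First I would set up the orientation and labeling of touching points on $Circ(d)$ exactly as in Theorem \ref{POrd}, using the tight implication $abc\to d$, obtaining the circular order $B_1,C_1,A_1,B_2,C_2,A_2$. By Lemma \ref{Q-one}, the portion of $\CH(a,d)$ lying outside $\tilde{d}$ is attached along the arc $\arck{A_1A_0A_2}$, and similarly for $b$ and $c$. Because $ad\to e$, the circle $e$ lies in $\CH(a,d)=\tilde{d}\cup Q_d(a)$, so any point of $\tilde{e}$ outside $\tilde{d}$ must be attached over the arc $\arck{A_1A_0A_2}$; intersecting the three conditions coming from $a$, $b$, and $c$, any point of $\tilde{e}$ outside $\tilde{d}$ must be attached over $\arck{A_1A_0A_2}\cap\arck{B_1B_0B_2}\cap\arck{C_1C_0C_2}$.

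The crux is then to show this triple arc intersection is empty. With the Point Order Theorem labeling, the three arcs $\arck{A_1A_0A_2}$, $\arck{B_1B_0B_2}$, $\arck{C_1C_0C_2}$ pairwise alternate (each pair of circles with $d$ is in configuration 3 by Lemma \ref{abc} and Theorem \ref{NC2}), and the interleaved order $B_1,C_1,A_1,B_2,C_2,A_2$ forces the three ``$0$-arcs'' to have empty common intersection — geometrically, no single point on $Circ(d)$ can lie simultaneously inside all three binary hulls' external attachment regions. I would verify emptiness purely combinatorially from the cyclic order: any point lying on all three of $\arck{A_1A_0A_2}$, $\arck{B_1B_0B_2}$, $\arck{C_1C_0C_2}$ would have to respect three incompatible containments dictated by the alternation of endpoints. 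Hence $\tilde{e}$ has no point outside $\tilde{d}$, giving $\tilde{e}\subseteq\tilde{d}$, i.e.\ $d\to e$.

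The step I expect to be the main obstacle is making the arc-intersection argument fully rigorous, in particular handling the degenerate and limit configurations: when $d$ has radius $0$, when one of the triples $\{a,c,d\}$ is in limit case 2 rather than configuration 3 (so some endpoints coincide, as in Theorem \ref{NC2}), or when $e$ itself has positive radius so that ``attached over an arc'' must be interpreted via Lemma \ref{Q-one}(2) about points arbitrarily close to the arc. I would treat $r_d=0$ separately — there $Q_d$ collapses and the three binary hulls meet only at $\{D\}$, forcing $\tilde{e}=\{D\}\subseteq\tilde{d}$ trivially once $e\to d$ is ruled out — and absorb the limit cases by the standard perturbation trick used in the proof of Lemma \ref{conf2}, slightly growing or shrinking $d$ (and, if needed, $a,b,c$) to return to the generic configuration-3 situation while preserving tightness of $abc\to d$ and the three binary implications.
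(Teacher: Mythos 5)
Your proposal is correct in substance and rests on the same two pillars as the paper's own proof: the Point Order Theorem giving the counterclockwise order $B_1,C_1,A_1,B_2,C_2,A_2$ on $Circ(d)$, and arc bookkeeping showing that $\CH(a,d)\cap\CH(b,d)\cap\CH(c,d)$ contains no point outside $\tilde{d}$, i.e.\ $Q_d(a,b,c)=\emptyset$. The difference is only in the final bookkeeping. The paper invokes Lemma \ref{Nested} to identify $\arck{A_2C_1C_2}$ and $\arck{C_1C_2B_1}$ as border arcs of the pairwise intersections $\CH(a,d)\cap\CH(c,d)$ and $\CH(b,d)\cap\CH(c,d)$, and then notes that since $A_2\neq B_1$ and $C_1\neq C_2$ these two arcs together cover all of $Circ(d)$, leaving no room for an external point. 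You instead show directly that $\arck{A_1A_0A_2}\cap\arck{B_1B_0B_2}\cap\arck{C_2C_0C_1}=\emptyset$ from the cyclic order; this is the complementary formulation of the same fact, and your combinatorial check does survive the limit-case coincidences ($A_1=B_2$, $C_2=A_2$, $B_1=C_1$), where pairwise arc intersections degenerate to single points not lying on the third arc. One small repair: Lemma \ref{Q-one}(2) only says $Q_d(a)$ has points arbitrarily close to $\arck{A_1A_0A_2}$, which is the wrong direction for your argument; what you need is that every point of $Q_d(a)$ projects radially from $D$ into the closed arc $\arck{A_1A_0A_2}$, and this follows from Lemma \ref{Q-one}(1): at any point of the border arc $\arck{A_1A_\infty A_2}$ the tangent to $Circ(d)$ is a supporting line of $\CH(a,d)$, so no point of $\CH(a,d)$ lies radially beyond that arc.

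The genuine soft spot is your $r_d=0$ case. The claim that the three cones with apex $D$ ``meet only at $\{D\}$ trivially'' is not trivial --- it is precisely the theorem in the point case. Pairwise, two such cones over disjoint circles can overlap substantially beyond $D$ (one circle can sit behind the other as seen from $D$, consistent with tightness of $abc\to d$ and the Triangle Property), so only the \emph{triple} intersection is empty, and establishing that requires the same order argument, now for angular sectors, or a limiting argument. The paper avoids this entirely by the opposite perturbation: since $D$ is interior to $\CH(a,b,c)$ and off the pairwise hulls, one can inflate $d$ to any sufficiently small radius $r_d>0$ while preserving tightness and the implications $ad\to e$, $bd\to e$, $cd\to e$; the already-proved positive-radius case then gives $\tilde{e}\subseteq\tilde{d}$ for every small $r_d$, forcing $e=\{D\}=d$. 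Your closing remark about absorbing degenerate cases by perturbing $d$ is exactly the right fix --- lead with that, and drop both the ``trivially'' claim and the confusing aside about ruling out $e\to d$, which plays no role once $\tilde{e}\subseteq\tilde{d}$ is forced.
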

\begin{proof}
Suppose the statement is proved when $r_d>0$, and we have 5 circles with all assumptions of Theorem and $d=\{D\}$. Since $abc\to d$ is tight, $D$
 is an inner points of $\CH(a,b,c)$. Therefore, we may set a small positive number as $r_d$ so that $abc\to d$ still holds, as well as other assumptions $ad\to e$, $bd\to e$, $cd\to e$. Since we assumed that statement is proved for the case $r_d>0$, we conclude that $d\to e$. This will hold for arbitrary $r_d>0$, therefore, $e=\{D\}=d$, and we are done.
 Thus, we only prove Theorem under additional assumption that $r_d>0$.

By Theorem \ref{POrd}, using the same labeling, the counterclockwise point order on $d$ beginning at $B_1$ is  $B_1, C_1, A_1, B_2, C_2, A_2$. %Further, by the labeling chosen, $C_0$ is on $\overset{\frown}{B_2A_2C_1}$. The positions of A_0, B_0, and C_0 are relevant, but I'm not sure how to explain them.

Now, by Lemma \ref{Nested} arc $\overset{\frown}{A_2C_1C_2}$ is part of the border for $\CH(a,d)\cap \CH(c,d)$, and arc $\overset{\frown}{C_1C_2B_1}$ is part of the border for $\CH(b,d)\cap \CH(c,d)$, see Figure \ref{N}.

 $A_2 \neq B_1$, else $d \in ch_c(a,b)$ and $abc \to d$ is not tight, and $C_1 \neq C_2$, since $d$ is not contained in $c$, so the complements of the arcs mentioned above are disjoint. Therefore, the two arcs forming the borders of the convex hulls include the entire border of $d$. It follows that there is no point outside $d$ from $\CH(a,d)\cap \CH(b,d)\cap \CH(c,d)$. Thus $Q_d(abc)=\emptyset$ and $d\to e$.
 \end{proof}

\begin{corollary}\label{List3}
The following geometries, identified here by their tight implications, are not representable by circles on the plane:

Geometry 74: $abc \to d$, $ad \to e$, $bd\to e$  and  $cd \to e$. 

Geometry 96: $abc \to d$, $ad \to e$, $bd\to e$, $cd \to e$ and $ab\to e$.

Geometry 105: $abc \to d$, $ad \to e$, $bd\to e$, $cd \to e$, $ab\to e$ and $ac\to e$. 

Geometry 143: $abc \to d$, $a \to e$, $bd\to e$  and  $cd \to e$.   

Geometry 147: $abc \to d$, $ad \to e$, $bd\to e$, $cd \to e$, $ab\to e$, $bc\to e$ and $ac\to e$.  

Geometry 206: $abc \to d$, $a \to e$, $bd\to e$,$cd \to e$ and $bc\to e$.  

Geometry 235: $abc \to d$, $a \to e$, $b\to e$  and  $cd \to e$.

Geometry 351: $abc \to d$, $a \to e$, $b\to e$  and  $c \to e$.
\end{corollary}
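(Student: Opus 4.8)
The plan is to apply Theorem \ref{G74} to each of the eight geometries and derive a contradiction with the listed tight implications. A representation by circles means choosing circles so that the closure operator $\ch_c$ coincides with the geometry's operator $\varphi$; hence every implication of $\varphi$ must hold for $\ch_c$ and conversely. So if the hypotheses of Theorem \ref{G74} are present in a given geometry, any circle representation would be forced to satisfy the extra implication $d\to e$, and I will show that $d\to e$ is absent from each geometry, giving the contradiction.

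First I would verify that the hypotheses of Theorem \ref{G74}, namely that $abc\to d$ is tight and that $ad\to e$, $bd\to e$, $cd\to e$ all hold, are met in every geometry on the list. Tightness of $abc\to d$ reduces to checking that none of $ab$, $ac$, $bc$ closes to contain $d$; since the only implication with $d$ on its right-hand side is $abc\to d$ itself, forward chaining from any two of $a,b,c$ never produces $d$, so $abc\to d$ is tight throughout. For the three binary implications, I would read them off directly where they appear (Geometries 74, 96, 105, 147) and otherwise recover them by monotonicity of the closure operator, Definition \ref{def:closure}(2): for example $\{a\}\subseteq\{a,d\}$ forces $e\in\varphi(\{a\})\subseteq\varphi(\{a,d\})$, so $a\to e$ yields $ad\to e$, which covers the occurrences of $a\to e$, $b\to e$, $c\to e$ in Geometries 143, 206, 235, and 351.

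With the hypotheses in place, Theorem \ref{G74} forces $d\to e$ in any circle representation, and the final step is to confirm that $d\to e$ fails in each geometry. Here the uniform observation is that no listed implication has a left-hand side contained in $\{d\}$: every left-hand side is either $\{a,b,c\}$ or a set containing one of $a,b,c$, so $\{d\}$ triggers nothing, $\varphi(\{d\})=\{d\}$, and thus $e\notin\varphi(\{d\})$. Equivalently, in the geometries where one of $ad\to e$, $bd\to e$, $cd\to e$ appears as a listed tight implication, the presence of $d\to e$ would immediately destroy that tightness. Either way $d\to e$ is not an implication of the geometry, contradicting what the representation would force.

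I expect the main obstacle to be bookkeeping rather than mathematical depth: one must treat the geometries uniformly despite the implications being presented in varying strengths (some supply $a\to e$, others $ad\to e$), and be careful that the monotonicity step recovers precisely the \emph{loose} implications $ad\to e$, $bd\to e$, $cd\to e$ that Theorem \ref{G74} requires. The point worth emphasizing is that the theorem's hypotheses do not demand these three be tight, only that they hold, which is exactly what monotonicity delivers; so no strengthening of the hypothesis is needed and the argument closes uniformly across all eight cases.
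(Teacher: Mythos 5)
Your proposal is correct and follows exactly the paper's route: the paper's proof of Corollary \ref{List3} is the one-line observation that the hypotheses of Theorem \ref{G74} hold in each geometry while the forced conclusion $d\to e$ does not, and you have simply spelled out the routine verifications (tightness of $abc\to d$ by forward chaining, recovery of the loose implications $ad\to e$, $bd\to e$, $cd\to e$ by monotonicity, and $\varphi(\{d\})=\{d\}$). Your explicit remark that the theorem needs those three implications only to hold, not to be tight, is a correct and useful clarification, but it does not change the argument.
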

\begin{proof}
In all these geometries the assumptions of Theorem \ref{G74} hold, but the conclusion does not follow from given implications.  
\end{proof}

 \section{Representation of geometries by ellipses}\label{ellipse}

In paper \cite{Poly20} it was shown that from 672 known non-isomorphic geometries on 5-element set 623 can be represented by circles on the plane. From remaining 49, 8 are not representable due to the Weak carousel property from \cite{AdBo19} or the Triangle property from \cite{Poly20}. It turns out that all of these 49 geometry can be represented by ellipses. See appendix for the representations.

In recent paper \cite{Kin17}, it was shown that there exist convex geometries not representable by ellipses. The result provides 
Erd\"os-Szekeres type of obstruction and follows from specific construction in \cite{DHH16} of arbitrary large families of convex sets on the plane with the list of specific properties, which cannot be realized by families of ellipses. 

But no specific convex geometry defined by convex sets or implications was found so far that cannot be represented by ellipses. Also, the minimal cardinality of the base set of such convex geometry is not yet known.

\section{Geometries with colors}\label{predicates}

In this section, we propose a method of expanding the number of convex geometries which can be represented with circles in the plane by defining several unary predicates on the set of circles in representations. For visualization purposes unary predicates can be shown as colors.

\subsection{An operator on systems with unary predicates}

In this section we consider a general approach of defining a special operator on sets with unary predicates.

In general a $k$-ary \emph{predicate} on set $X$ is a subset of $X^k$. In particular, for $k=1$, a unary predicate $P$ is a subset of $X$. Predicates are parts of the language of general algebraic systems, that may have, besides predicates, some functions defined on base set of a system. A unary predicate can distinguish elements from $X$ with a special property.

Suppose $P_1,\dots, P_s$ are $s$ unary predicates on set $X$, thus, we think of $\langle X; \{P_1,\dots, P_s\}\rangle$ 
as an algebraic predicate system. For every $z\in X$ define $P(z)=\{k: z \in P_k\}$.

Define an operator $\mathcal{S}:2^X\rightarrow 2^X$ as follows:
\[
\mathcal{S}(Y)= \{ z \in X: P(z) \subseteq \bigcup_{y\in Y} P(y)\}.
\]

\begin{lemma}
$\mathcal{S}$ is a closure operator on $X$. 
\end{lemma}
\begin{proof}
Apparently, this operator is increasing and monotone. So we only need to check that it is idempotent: $\mathcal{S}(\mathcal{S}(Y))= \mathcal{S}(Y)$. Indeed, $\mathcal{S}(\mathcal{S}(Y))=\{z\in X: P(z) \subseteq \bigcup_{y\in \mathcal{S}(Y)} P(y)\}= \{z\in X: P(z) \subseteq \bigcup_{y\in Y} P(y)\}= \mathcal{S}(Y)$. 
%The last equality follows from the definition.
\end{proof}

The following example shows that $\mathcal{S}$ does not necessarily satisfy the anti-exchange property.  

\begin{example}\label{OpS}
\end{example}
Consider $X=\{x,z,y_1,y_2, w\}$ and $P_1=\{x,z,y_2\}$, $P_2=\{z,y_1\}$. Then $P(x)=\{1\}, P(z)=\{1,2\},P(y_1)=\{2\}, P(y_2)=\{1\}, P(w)=\emptyset.$ One can check that the range of $\mathcal{S}$ in $2^X$ is $\{w, y_1w, xy_2w, X\}$.

Note that $\mathcal{S}$ does not satisfy the anti-exchange property:
set $xy_2w$ covers $w$ in the lattice of closed sets, but the difference between the two subsets is 2 elements.\hspace{3.5 cm} \qedsymbol{}

\vspace{0.5cm}

Suppose now that we have another closure operator $\phi:2^X\rightarrow 2^X$ on $X$. We then can define an associated operator $\phi_\mathcal{S}:2^X\to 2^X$ 
on $\langle X; \{P_1,\dots, P_s\}\rangle$ as follows:
\[
\phi_\mathcal{S}(Y)=\phi(Y)\cap \mathcal{S}(Y).
\]

\begin{lemma}
$\phi_\mathcal{S}$ is a closure operator. Moreover, $\phi_\mathcal{S}(Y)\subseteq \phi(Y)$, for any $Y\in 2^X$, therefore, all $\phi$-closed sets are simultaneously $\phi_\mathcal{S}$-closed. 
\end{lemma}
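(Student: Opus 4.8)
The plan is to observe that $\phi_\mathcal{S}$ is the pointwise intersection of the two closure operators $\phi$ and $\mathcal{S}$ (the latter shown to be a closure operator in the preceding lemma), and that such an intersection is always a closure operator. Concretely, I would verify the three axioms of Definition~\ref{def:closure} directly. Extensivity and monotonicity are immediate: for extensivity, $Y\subseteq\phi(Y)$ and $Y\subseteq\mathcal{S}(Y)$ give $Y\subseteq\phi(Y)\cap\mathcal{S}(Y)=\phi_\mathcal{S}(Y)$; for monotonicity, $Y\subseteq Z$ yields $\phi(Y)\subseteq\phi(Z)$ and $\mathcal{S}(Y)\subseteq\mathcal{S}(Z)$, and intersecting preserves the inclusion.

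The only step requiring a short argument is idempotence, $\phi_\mathcal{S}(\phi_\mathcal{S}(Y))=\phi_\mathcal{S}(Y)$, which is where I expect the main (though mild) obstacle. I would set $W=\phi_\mathcal{S}(Y)=\phi(Y)\cap\mathcal{S}(Y)$. Extensivity already gives $W\subseteq\phi_\mathcal{S}(W)$, so it remains to prove $\phi_\mathcal{S}(W)\subseteq W$. Since $W\subseteq\phi(Y)$, monotonicity together with idempotence of $\phi$ gives $\phi(W)\subseteq\phi(\phi(Y))=\phi(Y)$; likewise $\mathcal{S}(W)\subseteq\mathcal{S}(\mathcal{S}(Y))=\mathcal{S}(Y)$. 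Intersecting, $\phi_\mathcal{S}(W)=\phi(W)\cap\mathcal{S}(W)\subseteq\phi(Y)\cap\mathcal{S}(Y)=W$, as needed. The crux is simply that each factor contracts back onto its own closure of $Y$, so the intersection cannot grow beyond $W$.

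For the second assertion, the containment $\phi_\mathcal{S}(Y)=\phi(Y)\cap\mathcal{S}(Y)\subseteq\phi(Y)$ is automatic from the definition. Finally, to see that every $\phi$-closed set is $\phi_\mathcal{S}$-closed, I would suppose $\phi(Y)=Y$; then $\phi_\mathcal{S}(Y)=Y\cap\mathcal{S}(Y)$, and since $\mathcal{S}$ is extensive we have $Y\subseteq\mathcal{S}(Y)$, whence $Y\cap\mathcal{S}(Y)=Y$ and $Y$ is indeed $\phi_\mathcal{S}$-closed. No deeper structural input about the predicates is needed; the whole statement rests only on the abstract closure-operator axioms for $\phi$ and $\mathcal{S}$.
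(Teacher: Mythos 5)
Your proposal is correct and follows essentially the same route as the paper: both treat $\phi_\mathcal{S}$ as the pointwise meet of the closure operators $\phi$ and $\mathcal{S}$, with the paper simply asserting that such a meet is a closure operator while you spell out the (routine) idempotence verification $\phi(W)\subseteq\phi(Y)$, $\mathcal{S}(W)\subseteq\mathcal{S}(Y)$ explicitly. Your closing argument that $\phi$-closed sets are $\phi_\mathcal{S}$-closed ($Y\cap\mathcal{S}(Y)=Y$ by extensivity of $\mathcal{S}$) is a trivially equivalent variant of the paper's sandwich $\phi(Y)\subseteq\phi_\mathcal{S}(\phi(Y))\subseteq\phi(\phi(Y))=\phi(Y)$.
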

\begin{proof}
Since both $\phi$ and $\mathcal{S}$
are closure operators, so is operator $\phi_\mathcal{S}$, which incidentally equals the greatest lower bound of two operators on poset of closure operators on $X$.
In particular, $\phi_\mathcal{S}(Y)\subseteq \phi(Y)$ for any $Y\in 2^X$. Therefore, every $\phi(Y)$ is also closed for $\phi_\mathcal{S}$. Indeed, $\phi(Y)\subseteq \phi_\mathcal{S}(\phi(Y))\subseteq \phi(\phi(Y))=\phi(Y)$, i.e., $\phi_\mathcal{S}(\phi(Y))= \phi(Y)$.  
\end{proof}

The poset of closure operators on set $X$ was studied in J.B. Nation
\cite{Nat04}.

Unfortunately, operator $\phi_\mathcal{S}$ might not satisfy the anti-exchange property, even if $\phi$ does. This is due to the earlier observation that $\mathcal{S}$ may fail the anti-exchange property.
\begin{example}\label{NoAEP}
\end{example}

Consider 4-element base set $X=\{x,z, y_1,y_2\}$ and linear convex geometry $(X,\phi)$ with following convex sets:
$\{\emptyset, x, xy_1, xzy_1, xzy_1y_2\}$. 
Suppose $P_1=\{x,z\}$ and $P_2=\{y_1,y_2\}$ are two predicates on $X$.

Then $\phi_\mathcal{S}(\{y_1,y_2\}) = \{y_1,y_2\}$, because $\phi$-closure is the whole set and $\mathcal{S}(\{y_1,y_2\})\\=\{y_1,y_2\}$. Thus, $Y=\{y_1,y_2\}$ is $\phi_\mathcal{S}$-closed set. Moreover, $z$ is in $\phi_\mathcal{S}(Y \cup \{x\})$, because $z$ is in $P_1$ together with $x$, and $z$ is in $\phi(Y \cup \{x\})$. But the same is true when they are switched: $x$ is in $\phi_\mathcal{S}(Y \cup \{z\})$. Thus, we can exchange elements $x,z$ and anti-exchange fails. This is shown by the fact that in the lattice of closed sets of $\phi_\mathcal{S}$ from element $Y$ to $X$ there is no progression by adding one element: one would jump by two elements $x,z$.\hspace{4.5cm}\qedsymbol{}
\vspace{0.5cm}

\begin{example}\label{WithAEP}
\end{example}
We could change the predicates on $X$ from Example \ref{NoAEP}  and get an extension of convex geometry $(X,\phi)$ in that example. Let $P_1=\{z,y_2\}$, $P_2=\{x\}$. Then $P(y_1)=\emptyset, P(z)=P(y_2)=\{1\}$, $P(x)=\{2\}$. 
 
We verify that there are new closed sets $\{y_1\}, \{zy_1\},\{z,y_1,y_2\}$ for operator $\phi_\mathcal{S}$. Moreover, the operator is with the anti-exchange property.\hspace{2cm}\qedsymbol{}

\vspace{0.5cm}

What was shown in Example \ref{WithAEP} is possible to achieve for convex geometries given by circle configuration. Introducing one, two or three predicates to circle configuration, it is possible to represent all 5-element geometries that are not representable by circles only.

\begin{example}\label{G134with color}
\end{example}

It was shown in Corollary \ref{List2} that G134 is not representable by circles. G134 is given by the following implications on $X=\{a,b,c,d,e\}$:
\[
abc\to d, bc\to e, a\to e
\]

The geometry $G=(X,\CH)$ on same set $X$ as circles, with closure operator $\CH$, represented on Figure \ref{fig:G134} satisfies these implications, except $abc\to d$ is not tight, and can be replaced by $bc\to d$. 

\begin{figure}[H]
    \centering
        \begin{tikzpicture}
            %circles
            \draw (2.16431605453767,1.11889327842919) [color = {rgb,255:red,153;green,51;blue,255}] circle (1.820495447927309cm);
            \draw (1.0268019607180707,-0.5053555499986746) [color = {rgb,255:red,0;green,0;blue,255}] circle (1.1713265285113694cm);
            \draw (3.684148903695332,-0.5138464250981529) [color = {rgb,255:red,0;green,0;blue,255}] circle (1.1713265285113694cm);
            \draw (2.673471597346966,-1.5264597597974565) [color = {rgb,255:red,204;green,0;blue,0}] circle (0.04233710344016998cm);
            \draw (2.197982591776191,-0.17641061898039612) [color = {rgb,255:red,0;green,0;blue,255}] circle (0.26813498845440986cm);
            \draw[gray] (2.1045829656819315,1.2245837724335344)node {$a$};
            \draw[gray] (1.0177509529487316,-0.43113687196474704)node {$b$};
            \draw[gray] (3.6499222337869504,-0.3886824964673552)node {$c$};
            \draw[gray] (2.5140354716506193,-1.424569258603716)node {$d$};
            \draw[gray] (2.1810008415772346,-0.05753836758769891)node {$e$};
        \end{tikzpicture}
    \caption{$abc\to de, bc\to e, a\to e, C_1=\{a,b,c,e\},C_2=\{a,d\}$}
    \label{fig:G134}
\end{figure}

It can be checked directly that the family of closed sets of G134 has $bce$, which is not closed in $G$.

We can introduce two predicates $C_1=\{a,b,c,e\}$ and $C_2=\{a,d\}$ on $X$, which results in the following family of closed sets for operator $\mathcal{S}$:
\[
\emptyset, d, bce, X
\]

Note that sets $\emptyset$ and $d$ are already closed in $G$, and proper intersections of closed sets of $G$ with $bce$ give $b,c,e,be,ce$, which are already $\CH$-closed. Therefore, $bce$ is the only new closed set of operator $\CH_\mathcal{P}$ compared to $\CH$. Moreover, set $bcde$ is $\CH$-closed set, which is one-point extension of $bce$. This means that $\CH_\mathcal{P}$ has exactly one more closed set than $\CH$ and it satisfies the anti-exchange property.\hspace{10.5cm}\qedsymbol{}

\vspace{0.5cm}

\subsection{Sufficient condition for representation with unary predicates}

We are going to formulate a sufficient property of operator $\mathcal{S}$ so that a closure system with operator $\phi$ can be expanded by $\mathcal{S}$-closed sets so that $\phi_\mathcal{S}$ still satisfies the anti-exchange property. Let $\mathcal{C}_\phi$ be the family of $\phi$-closed sets.

Recall, say, from \cite{CaMo03} that subset $Y\subseteq X$ is called \emph{quasi-closed} w.r.to operator $\phi$, if it is not in $\mathcal{C}_\phi$ and $\mathcal{C}_\phi \cup \{Y\}$ is closed under the intersection, i.e., it is a family of closed sets of another closure operator. Such property is equivalent to:
\[
Z\cap Y \in \mathcal{C}_\phi \text { or } Z\cap Y = Y
\]
for all $Z\in \mathcal{C}_\phi$.

\begin{lemma}\label{suffAEP}
Suppose $\phi$ is a closure operator with the anti-exchange property defined on set $X$, and operator $\mathcal{S}$ is defined by unary predicates $P_1,\dots, P_k$ on $X$. Assume that every $\mathcal{S}$-closed set is ether $\phi$-closed or quasi-closed w.r.t. $\phi$, and in latter case has one-element extension in $\mathcal{C}_\phi$. Then $\phi_\mathcal{S}$ satisfies the anti-exchange property as well.
\end{lemma}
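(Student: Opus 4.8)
The plan is to work entirely with the lattice of closed sets of $\phi_{\mathcal S}$ and to use the fact that a finite closure system satisfies the anti-exchange property if and only if every covering pair $C \prec D$ of closed sets differs by a single element, i.e. $|D \setminus C| = 1$. This equivalence is short: if some cover $C \prec D$ adds two distinct points $x,y$, then $\phi(C\cup\{x\}) = \phi(C\cup\{y\}) = D$ witnesses a failure of anti-exchange; conversely, if anti-exchange fails for a closed set $Y$ with distinct witnesses $x,y$ and $D = \phi(Y\cup\{x\}) = \phi(Y\cup\{y\})$, then a closed set $M$ maximal among those with $Y \subseteq M \subsetneq D$ is covered by $D$, and since $x,y\notin M$ (otherwise $D\subseteq M$), the pair $M\prec D$ adds at least the two points $x,y$. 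So the whole problem reduces to controlling set-differences across covers in $\mathcal{C}_{\phi_{\mathcal S}}$.

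First I would identify the closed sets of $\phi_{\mathcal S}$. Since $\phi_{\mathcal S}=\phi\wedge\mathcal S$ is the meet of two closure operators, a set is $\phi_{\mathcal S}$-closed exactly when it has the form $C\cap S$ with $C$ $\phi$-closed and $S$ $\mathcal S$-closed; this is immediate from $\phi_{\mathcal S}(Y)=\phi(Y)\cap\mathcal S(Y)$ together with monotonicity. Now I invoke the hypothesis: each $\mathcal S$-closed $S$ is either $\phi$-closed or quasi-closed. If $S$ is $\phi$-closed then $C\cap S\in\mathcal C_\phi$; if $S$ is quasi-closed then, by the defining property of quasi-closedness, $C\cap S\in\mathcal C_\phi$ or $C\cap S=S$. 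Hence $\mathcal C_{\phi_{\mathcal S}}=\mathcal C_\phi\cup\mathcal Q$, where $\mathcal Q$ is the family of quasi-closed $\mathcal S$-closed sets, each of which (by hypothesis) satisfies $\phi(S)=S\cup\{x_S\}$ for a single point $x_S$.

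With this description in hand I would check that every cover in $\mathcal C_\phi\cup\mathcal Q$ is a one-element cover, splitting into cases by the types of the two sets. A cover between two $\phi$-closed sets is also a cover inside the smaller family $\mathcal C_\phi$, hence one-element because $\phi$ has anti-exchange. The unique upper cover of a quasi-closed $S$ must be $\phi(S)=S\cup\{x_S\}$ (any $\phi$-closed set above $S$ contains $\phi(S)$, and no $\mathcal Q$-set can sit just above $S$, by the last case below), which is one-element. The two genuinely delicate points, and where I expect the real work to lie, are: (i) a lower cover $C\prec S$ with $C$ $\phi$-closed and $S\in\mathcal Q$; and (ii) ruling out covers between two members of $\mathcal Q$. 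For (i) the key trick is to pass to $C':=C\cup\{x_S\}$: a $\phi$-closed cover $C\prec C'$ above $C$ inside $\phi(S)$ must add $x_S$ (adding anything in $S$ would give a $\phi$-closed set strictly between $C$ and $S$, contradicting the cover), and then one shows $C'\prec\phi(S)$ in $\mathcal C_\phi$, since a $\phi$-closed set strictly between $C'$ and $\phi(S)$ would, after intersecting with $S$ and using quasi-closedness, again produce a $\phi$-closed set strictly between $C$ and $S$; hence $|S\setminus C|=|\phi(S)\setminus C'|=1$ by the anti-exchange property of $\phi$. For (ii), if $S'\subsetneq S$ with both quasi-closed, then $\phi(S')\cap S$ must be $\phi$-closed or equal $S$ by quasi-closedness of $S$; a short case analysis on whether $x_{S'}\in S$ shows that either $\phi(S')$ lies strictly between $S'$ and $S$, or quasi-closedness of $S$ is violated, so no such cover exists.

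Combining these cases, every covering pair in $\mathcal C_{\phi_{\mathcal S}}$ differs by exactly one element, so $\phi_{\mathcal S}$ satisfies the anti-exchange property. The crux of the argument is the interplay in steps (i) and (ii) between the quasi-closedness condition, which controls intersections of $\mathcal Q$-sets with $\phi$-closed sets, and the one-element-extension hypothesis, which pins down $\phi(S)$; I would therefore make sure that both hypotheses are used explicitly and are genuinely necessary, noting that Example \ref{NoAEP} already shows that dropping the one-element-extension condition breaks anti-exchange.
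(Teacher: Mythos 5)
Your proof is correct, but it takes a genuinely different and noticeably heavier route than the paper's. Both arguments begin identically: the $\phi_{\mathcal S}$-closed sets are exactly the intersections $C\cap L$ with $C\in\mathcal{C}_\phi$ and $L$ $\mathcal{S}$-closed, and the quasi-closedness hypothesis collapses this to $\mathcal{C}_{\phi_{\mathcal S}}=\mathcal{C}_\phi\cup\mathcal{Q}$, where $\mathcal{Q}$ consists of the quasi-closed $\mathcal{S}$-closed sets. From there the paper invokes the one-point-extension (accessibility) characterization of the anti-exchange property, as recorded in Definition \ref{cg_alignment}: it suffices that every member of the enlarged family admits \emph{some} one-element closed extension upward, which is immediate --- the $\phi$-closed sets inherit their extensions from $\mathcal{C}_\phi$ (and these survive in the larger family), while each $S\in\mathcal{Q}$ has $\phi(S)=S\cup\{x_S\}\in\mathcal{C}_\phi$ by hypothesis; the proof ends there. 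You instead use the characterization ``AEP iff every covering pair of closed sets differs by one element,'' which forces you to control \emph{lower} covers as well, hence your cases (i) and (ii), where quasi-closedness is used twice more. I checked these and they go through: in (i), a one-element $\phi$-cover $C\cup\{p\}$ of $C$ inside $\phi(S)$ with $p\in S$ cannot equal $S$ (since $S\notin\mathcal{C}_\phi$) and so would sit strictly between $C$ and $S$, forcing $p=x_S$, after which intersecting with $S$ shows $C\cup\{x_S\}\prec\phi(S)$ in $\mathcal{C}_\phi$ and $|S\setminus C|=|\phi(S)\setminus(C\cup\{x_S\})|=1$ because $x_S\notin S$; and in (ii), for nested $S'\subsetneq S$ in $\mathcal{Q}$, either $x_{S'}\in S$ and $\phi(S')$ lies strictly between them, or $x_{S'}\notin S$ and $\phi(S')\cap S=S'$ violates quasi-closedness of $S$, so no cover inside $\mathcal{Q}$ exists. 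What your route buys is a self-contained argument (you prove your cover criterion from scratch rather than citing the Edelman--Jamison equivalence) plus extra structural information --- each $S\in\mathcal{Q}$ has unique upper cover $\phi(S)$, and even its lower covers are one-element. What it costs is that most of this work is unnecessary for the lemma: the hypothesis supplies exactly the upward one-point extensions that the accessibility criterion asks for, which is the observation the paper's three-line proof exploits.
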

\begin{proof}
$\phi_\mathcal{S}$-closed sets are of the form $C\cap L$, where $C \in \mathcal{C}_\phi$ and $L$ is $\mathcal{S}$-closed. By the definition, it is either in $\mathcal{C}_\phi$, or it is $L$. Thus only $\mathcal{S}$-closed sets may be added to the family of $\phi$-closed sets. By assumption, all $\mathcal{S}$-closed sets have one-element extensions in $\mathcal{C}_\phi$, thus, adding them to $\mathcal{C}_\phi$ will give us a closure system with one-point extension, which is equivalent to $\phi_\mathcal{S}$ to satisfy the anti-exchange property.
\end{proof}

Revisiting Example \ref{G134with color}, note that the closed sets of operator $\mathcal{S}$ are
\[
\emptyset, d, bce, X
\]
From these, only $bce$ is not $\phi$-closed. We check that all proper intersections of this set with $\phi$-closed sets give $c,e,be,b, ce$, which are $\phi$-closed. Therefore, $bce$ is quasi-closed. It also has one-element extension among $\phi$-closed sets. Therefore, Lemma \ref{suffAEP} applies.

The next example shows that Lemma \ref{suffAEP} allows some further generalization.

\begin{example}\label{G7}
\end{example}
Consider representation of $G7$ (id: 4294924159 in \cite{Poly20}) by circles with predicates given by implications
\[
acd\to e, abd\to e, abc\to e
\]

We start with representation of $G7$ on 4-element set $\{b,c,d,e\}$, with id:65303 from \cite{Poly20}. It is defined by implications
\[
cd\to e, bd\to e, bc\to e
\]

\begin{figure}[htp]
    \centering
    \includegraphics[width=5cm]{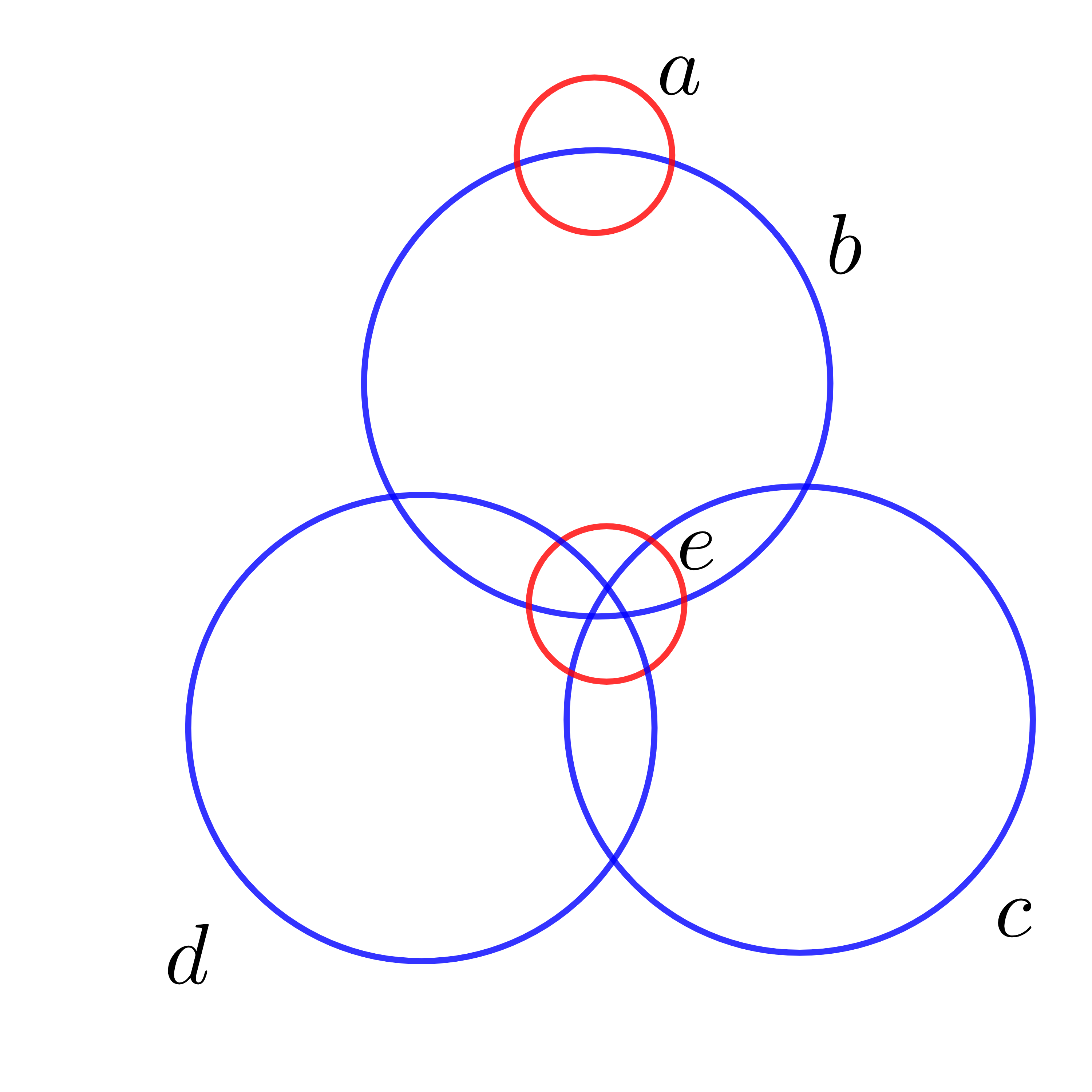}
    \caption{}
    \label{fig:G7}
\end{figure}

If we ignore color-predicates on Figure \ref{fig:G7}, then we get representation of G52(id: 4294907671 in \cite{Poly20}) on $X=\{a,b,c,d,e\}$ with implications 
\[
cd\to e, bd\to e, bc\to e,
\]
i.e., $b,c,d,e$ represent $G7$ on $\{b,c,d,e\}$, and $a$ is added so that $a$ does not participate in any implication.

We can introduce two predicates on $X$: red and blue, where $R=\{a,e\}$ and $B=\{b,c,d\}$, as on Figure \ref{fig:G7}. This gives the $\mathcal{S}$-closed sets:
\[
\emptyset, ae,bcd,X
\]
This time, $\emptyset, ae,X$ are closed in $G52=(X,\phi)$, while $bcd$ is neither closed nor quasi-closed, and intersections of $bcd$ with $\phi$-closed sets give non-closed sets for $\phi$: 
\[
bc,bd,cd.
\]
All of these are quasi-closed w.r.t. $\phi$, and they have one-point extensions in $C_\phi$: $bce, bde,cde$, respectively. Similarly, $bcd$ has one-point extension $bcde$ in $C_\phi$. Thus, sets $bc,cd,bd,bcd$ can be added to $C_\phi$ preserving one-point extension property, which guarantees that $C_\phi$ has AEP.

Adding $bc,cd,bd,bcd$ to $\phi$-closed sets, make all subsets of $\{b,c,d,e\}$ $\phi_\mathcal{S}$-closed, but $abc,acd,abd,abcd$ are still not closed, which gives needed implications describing $\phi_\mathcal{S}$:
\[
acd\to e, abd\to e, abc\to e.
\]

This example shows that we can extend assumptions of Lemma \ref{suffAEP} to sets that are the \emph{intersections} of $\phi$-closed and $\mathcal{S}$-closed sets, and obtain the same conclusion.

\subsection{Geometry that requires more than 2 colors}

In Appendix A we show colored representations of all convex geometries on 5-element set which cannot be represented by circles.
Most of them require only one or two colors for representation by colored circles.
 Only one geometry, G18 requires three colors, which is demonstrated in this section.

\begin{proposition}
Convex geometry G18 on 5-element set defined by implications: 
\[abc\to de,acd\to e,bcd\to e,abd\to e\]
cannot be represented by circles with coloring by at most two colors.
\end{proposition}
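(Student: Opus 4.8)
The plan is to argue by contradiction. Suppose a circle representation $\phi=\ch_c$ together with two unary predicates $P_1,P_2$ (giving $\mathcal S$ and $\phi_{\mathcal S}=\phi\cap\mathcal S$) realizes G18. First I would record the two structural facts that drive everything. Since $\phi_{\mathcal S}$ is the meet of the two closure operators, its closed sets are exactly the intersections $C\cap L$ with $C$ a $\phi$-closed set and $L$ an $\mathcal S$-closed set; taking $C=X$ shows every $\mathcal S$-closed set is itself $\phi_{\mathcal S}$-closed and hence must be one of the $26$ closed sets $\mathcal C$ of G18. With only two predicates there are at most four $\mathcal S$-closed sets: $X$ and the sets $Y_1,Y_2,Y_0$ determined by the color-subsets $\{1\},\{2\},\emptyset$, satisfying $Y_0=Y_1\cap Y_2\subseteq Y_1,Y_2\subseteq X$. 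Thus the coloring is encoded by two closed sets $Y_1,Y_2\in\mathcal C$, and the only closed sets the colors can add to $\mathcal C_\phi$ are $C\cap Y_1$, $C\cap Y_2$, $C\cap Y_0$. So the whole problem becomes: is there a circle-representable $\phi$ with $\mathcal C_\phi\subseteq\mathcal C$ together with $Y_1,Y_2\in\mathcal C$ whose intersections with $\mathcal C_\phi$ fill exactly the gap $\mathcal C\setminus\mathcal C_\phi$?

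Next I would pin down the admissible $\phi$. Because $\mathcal C_\phi\subseteq\mathcal C$, the operator $\phi$ satisfies every implication of G18, and being circle-representable it must obey Corollary \ref{List2} (equivalently Corollary \ref{C54}). Applying that corollary to the tight implication $abc\to d$ with each of $a,b,c$ in the ``center'' role gives three disjunctions whose only common solutions force at least two of $ad\to e$, $bd\to e$, $cd\to e$ to hold in $\phi$; meanwhile Theorem \ref{G74} forbids all three at once, since that would produce $d\to e\notin$ G18. Hence, up to the $S_3$-symmetry of G18 on $\{a,b,c\}$, exactly two diagonal pairs, say $\{a,d\}$ and $\{b,d\}$, fail to be $\phi$-closed and must be recovered by the colors. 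For such a recovery $\{a,d\}=C\cap Y_i$ one needs $e\notin Y_i$ (any $\phi$-closed $C\supseteq\{a,d\}$ contains $\phi(a,d)\supseteq\{a,d,e\}$) and $b,c\notin Y_i$, and scanning $\mathcal C$ shows the forcing is very rigid, essentially pushing $Y_1,Y_2$ toward the pairs themselves with $Y_0=\{d\}$ and both $c,e$ carrying both colors.

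The heart of the matter, and the step I expect to be the main obstacle, is to show that circle-representability of $\phi$ cannot coexist with \emph{only} those two diagonal pairs missing; that is, one must prove that any $\phi$ realizing G18's implications by circles is forced to lose still more closed sets among the triples and quadruples $abe,ace,ade,\dots,abde,\dots$ once two diagonals collapse. This is where the fine configuration analysis is needed: using Lemma \ref{Nested} and the Point Order Theorem \ref{POrd} to track, on the boundary of the circle $d$ and in the binary hulls $\CH(a,d)$, $\CH(b,d)$, $\CH(c,d)$, exactly which further inclusions the tight implication $abc\to d$ forces when $e$ is simultaneously captured by two of those hulls. The goal is to exhibit a third, independent gap in $\mathcal C_\phi$ whose recovery set is incomparable to both $Y_1$ and $Y_2$, so that no two closed sets $Y_1,Y_2$ (with $Y_0$ their meet) can generate all missing sets without also generating a set outside $\mathcal C$. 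Carrying out this incompatibility — equivalently, proving that the required three recovery classes cannot be collapsed into the two-color lattice $Y_0\le Y_1,Y_2\le X$ — is precisely what forces a third predicate and completes the contradiction.
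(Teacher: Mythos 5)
Your opening reductions are correct and even sharper than the paper's: the observation that $\mathcal{S}$-closed sets are $\phi_{\mathcal{S}}$-closed and hence must be among G18's closed sets, and the use of Corollary \ref{C54} in all three symmetric positions to force at least two of $ad\to e$, $bd\to e$, $cd\to e$ geometrically, are both sound. But two genuine gaps follow. First, your appeal to Theorem \ref{G74} --- ``all three at once would produce $d\to e\notin$ G18'' --- is not valid in the colored setting: $\mathcal{C}_\phi\subseteq\mathcal{C}$ bounds the geometric closed sets from one side only, so a geometric $d\to e$ is not in itself a contradiction, since the predicates could kill it. Excluding the all-three case needs a color count: each of $A\cup D$, $B\cup D$, $C\cup D$ must miss a color of $E$ while each of $A\cup B\cup D$, $A\cup C\cup D$, $B\cup C\cup D$ must contain all of $E$, which fails with two colors. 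Counting of this kind, not geometry, is how the paper's proof closes every case.

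Second and decisively, your ``heart of the matter'' is a programme, not a proof, and it diverges from the paper at exactly the point where the work lies. The paper never seeks a third gap: after Corollary \ref{C54} it argues that to kill both $xd\to e$ and $zd\to e$ while retaining $xzd\to e$ there must be a single color of $e$ absent from $X\cup Z\cup D$, contradicting $E\subseteq X\cup Z\cup D$. Your own setup in fact exposes the sub-case that this one-line count passes over: the two kills may use two \emph{different} colors, which forces $D$ colorless, $E$ bichromatic, and $X,Z$ opposite singletons --- precisely your rigid scenario $Y_1,Y_2$ equal to the two pairs and $Y_0=\{d\}$, all of which are G18-closed. In that sub-case your plan stands or falls with the claimed geometric incompatibility, and you give no argument for it; worse, Lemma \ref{Nested} suggests it is false. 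Taking $a,b,c$ as point circles far apart, $d$ inscribed centrally so that $abc\to d$ is tight, and $e$ mostly inside $d$ with a small sliver protruding into $Q_d(a,c)$, one gets $e\subseteq\CH(a,d)\cap\CH(c,d)$ (so $ad\to e$ and $cd\to e$) while $e\not\subseteq\tilde{d}$ and $e\not\subseteq\CH(b,d)$, since $Q_d(a,c)$ lies outside $\CH(b,d)$; checking the remaining hulls yields no implication beyond G18 plus the two killable diagonals. So the ``third, independent gap'' you need does not appear in this configuration, and the final step cannot be completed as described: any repair must close the split-color sub-case directly, rather than hope for an additional geometric obstruction.
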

\begin{proof}
 Assume that we have representation of G18 by 5 circles with at most two colors $c_1$ and $c_2$. Let $A,B,C,D,E$ be color sets of $a,b,c,d,e$ respectively.  
 Since we have $abc\to d$ tight, $abd\to e$ and $bcd\to e$, by Corollary \ref{C54} we have either $bd\to e$ or $ad\to e$ and $cd\to e$ that hold by the convex hall operator on circles. We study both of these cases and show that we get to a contradiction.

 \textbf{Case 1:} Assume that $ad\to e$ and $cd\to e$ hold.  Since $ad\to e$ and $cd\to e$ by the convex hull operator, we need to have some color $c_1 \in E$ such that $c_1 \notin D$, $c_1 \notin C$ and $c_1 \notin A$. Thus $A,C,D \subseteq  \{c_2\}$. Thus $acd\to e$ does not hold with the new $\CH_C$ operator, since $c_1 \in E$ and $A,C,D = \{c_2\}$. This brings to the contradiction. \\
 
 \textbf{Case 2:} $bd \to e$. We further split it into two sub-cases: \\
    
 \textbf{\textit{Case 2.1:}} Suppose $cd\to e$ holds for the convex hall operator on circles. Since $bd\to e$ and $cd\to e$ by the convex hull operator, we need to have some color $c_1 \in E$ such that $c_1 \notin D$, $c_1 \notin C$ and $c_1 \notin B$. Thus $B,C,D \subseteq  \{c_2\}$ and $bcd\to e$ does not hold with the new $\CH_C$ operator. This brings to the contradiction. \\
 
 \textbf{\textit{Case 2.2:}} Assume that $cd\not\to e$. We use Corollary \ref{C54} again, relabelling b with c. Since $abc\to d$ tight, $acd\to e$ and $bcd\to e$, by Corollary \ref{C54} we have either $cd\to e$ or $ad\to e$ and $bd\to e$. Now, since $cd\not\to e$, we have $ad\to e$ and $bd\to e$. Since $ad\to e$ and $bd\to e$ hold for the convex hull operator, we need to have some color $c_1 \in E$ such that $c_1 \notin D$, $c_1 \notin B$ and $c_1 \notin A$. Thus $A,B,D \subseteq \{c_2\}$. Thus, $abd\to e$ does not hold with the new $\CH_C$ operator, bringing to the contradiction.
 \end{proof}

On the other hand, the representation with three colors is possible and it is shown on Figure \ref{fig:G18}.

\section{Acknowledgments.}
The paper follows up \cite{Poly20} completed by the team of 18 undergraduate and 2 graduate students in 2020, in the framework of PolyMath - 2020 project. We thank participants of the original project who continued to support the work of the current group in various capacities:  Fernanda Yepez-Lopez, Rohit Pai and Stephanie Zhou.

\textit{Address for correspondence:}\\
Kira Adaricheva, Department of Mathematics, Hofstra University, Hempstead NY, 11549\\
\textit{Email address:} \href{mailto:kira.adaricheva@hofstra.edu}{\texttt{Kira.Adaricheva@Hofstra.edu}}
\newpage

%Remove the comment environment below and preamble and \end{document} in EllipseTikz.tex before submitting
%\begin{comment}
\newpage

\appendix
\appendixpage
\addappheadtotoc
\section{Ellipse Representations}
\include{
%Representations/EllipseRepresentations/
%EllipseTikz
anc/AppendixA}
\section{Circle Coloring Representations}
\include{
%Representations/Circle-Coloring/
%CircleRepresentations
anc/AppendixB}

%\end{comment}
%****************************************

\end{document}